\title{The non-multiplicativity of the signature modulo $8$ of a fibre bundle is an Arf--Kervaire invariant}
\author{Carmen Rovi}
\address{Department of Mathematics,
Indiana University,
Bloomington, Indiana 47405}
\email{crovi@indiana.edu}
\urladdr{http://pages.iu.edu/~crovi/index.html}
 \title[The signature modulo $8$ of a fibration]{The non-multiplicativity of the signature modulo 8 of a fibre bundle is an Arf--Kervaire invariant}
\newtheorem{thm}{Theorem}[section]
\newtheorem{theorem}[thm]{Theorem}
\newtheorem{lemma}[thm]{Lemma}
\newtheorem{rem}{Remark}
\theoremstyle{definition}
\newtheorem{definition}[thm]{Definition}
\newtheorem{example}[thm]{Example}
\newtheorem{rmk}[thm]{Remark}
\newtheorem{proposition}[thm]{Proposition}
\newcommand{\bbD}{\mathbb{D}}
\newcommand{\zz}{\mathbb Z}
\newcommand{\bb}[1]{\mathbb{#1}}
\newcommand{\mc}[1]{\mathcal{#1}}
\newcommand{\cP}{\mathcal P}
\newcommand{\pe}{\pi_1(E)}
\newcommand{\bbDA}{\mathbb{D}(\mathbb{A})}
\newcommand{\Fd}{F^{\textnormal{dual}}}
\newcommand{\wtE}{\widetilde{E}}
\newcommand{\wtF}{\widetilde{F}}
\newcommand{\ZZ}{\mathbb Z}
\begin{document}

\begin{abstract}
It was proved by Chern, Hirzebruch and Serre that the signature of a fibre bundle $F\to E \to B$ is multiplicative if the fundamental group $\pi_1(B )$ acts trivially on $H^*(F; \bb{R})$, with $\sigma(E)=\sigma(F)\sigma(B )$. Hambleton, Korzeniewski and Ranicki
proved that in any case the signature is multiplicative modulo $4$, $\sigma(E)=\sigma(F)\sigma(B)\bmod 4$. In this paper we present two results concerning the multiplicativity modulo $8$: firstly we identify $(\sigma(E)-\sigma(F)\sigma(B))/4 \bmod 2$ with a $\zz_2$-valued Arf-Kervaire invariant of a Pontryagin squaring operation. Furthermore, we prove that if $F$ is $2m$-dimensional and the action of $\pi_1(B)$ is trivial on $H^m(F, \bb{Z})/torsion \otimes \bb{Z}_4$, this Arf-Kervaire invariant takes value $0$ and hence the signature is multiplicative modulo $8$, $\sigma(E)=\sigma(F) \sigma(B ) \bmod 8$.

\end{abstract}

\maketitle


\section*{Introduction} \label{Introduction}

The signature of a nondegenerate symmetric bilinear form on a finite dimensional real vector space is the number of positive definite summands minus the number of negative definite summands in a splitting of the form into $1$-dimensional nondegenerate bilinear forms. The signature $\sigma(X)\in \zz$ of an oriented $4k$-dimensional Poincar\'e duality space $X$ with fundamental class $[X] \in H_{4k}(X)$ is the signature of the symmetric bilinear form $(a, b) \mapsto \langle a \cup b, [X] \rangle$.


Chern, Hirzebruch and Serre proved in \cite{HirzebruchSerreChern}  that multiplicativity of the signature of a fibre bundle holds when the action of the fundamental group $\pi_1(B)$ on the cohomology ring of the fibre $H^*(F; \bb{R})$ is trivial,
$$\sigma(E) - \sigma(B)\sigma(F) =0 \in \bb{Z}.$$
Later on Kodaira, Hirzebruch and Atiyah constructed non-multiplicative fibre bundles, with the action of $\pi_1(B)$ action on $H^*(F;\bb{R})$ necessarily nontrivial.

The signature of a fibre bundle is multiplicative modulo 4, whatever the action. This was proved by Meyer in  \cite{Meyerpaper} for surface bundles and by Hambleton, Korzeniewski and Ranicki in  \cite{modfour} for high dimensions. The two main results in this paper are the following. In Theorem \ref{4Arf-general-fibration} we identify the obstruction to multiplicativity of the signature modulo $8$ of a fibre bundle with a $\zz_2$-valued \textit{Arf-Kervaire invariant}. Moreover we shall prove that if the action of $\pi_1(B)$ is trivial on $H^m(F, \bb{Z})/torsion \otimes \bb{Z}_4$, this Arf invariant takes value $0$. That is, we shall prove the following theorem:

{\bf Theorem \ref{mod8-theorem}} \qua
{Let $F^{2m} \to E^{4k} \to B^{2n}$ be an oriented Poincar\'e duality fibration. If the action of $\pi_1(B)$ on $H^m(F, \bb{Z})/torsion \otimes \bb{Z}_4$ is trivial, then
$$\sigma(E)  -\sigma(F) \sigma(B) =0\in \zz_8.$$}
A key feature for our study of the signature of a fibration is obtaining a model for the chain complex of the total space which gives us enough information to compute its signature. It has been known since the work by Chern, Hirzebruch and Serre \cite{HirzebruchSerreChern} that the signature of the total space depends only on the action of the fundamental group of the base $\pi_1(B)$ on the cohomology of the fibres.
Clearly it is not possible to construct the chain complex of the total space by taking into account only the action of $\pi_1(B).$ For example, the base space of the Hopf fibration
$S^1 \to S^3 \to S^2 $
has trivial fundamental group $\pi_1(S^2)= \{ 1 \}$, but the chain complex of the total space in this case is not a product.
So taking into account the information from the chain complexes of the base and fibre and the action of $\pi_1(B)$ is not enough to construct the chain complex of the total space, but it is enough to construct a model that will detect the signature.

The model that we will develop here is inspired by the transfer map in quadratic $L$-theory constructed in \cite{SurTransfer} by L\"uck and Ranicki.
In \cite{SurTransfer} the surgery transfer of a fibration $F^m \to E \to B^n$ with fibre of dimension $m$ and base of dimension $n$ is given by a homomorphism
$$p^{!} \co L_n(\bb{Z}[\pi_1(B)]) \longrightarrow  L_{n+m}(\bb{Z}[\pi_1(E)]). $$

L\"uck and Ranicki also prove in \cite{SurTransfer} that the surgery transfer map in quadratic $L$-theory agrees with the geometrically defined transfer maps. A similar transfer map does not exist in symmetric $L$-theory.
There are two obstructions to lifting a symmetric chain complex $(C, \phi) \in L^n(\bb{Z}[\pi_1(B)])$ to an $(m+n)$-dimensional chain complex $p^!(C, \phi) \in L^{n+m}(\bb{Z}[\pi_1(E)])$ which are described in the appendix of \cite{SurTransfer}.
Basically the difference with the quadratic $L$-theory transfer lies in the fact that the symmetric $L$-groups are not $4$-periodic in general, so that one cannot assume that surgery below the middle dimension can be performed to make $(C, \phi)$  highly-connected.

The chain model that we shall discuss in this paper will provide a well defined map
$$L^n(\bb{Z}[\pi_1(B)]) \longrightarrow  L^{n+m}(\bb{Z}).$$
The foundations for the construction of this map where laid by L\"uck and Ranicki in \cite{SurTransfer} and \cite{SurObstructions}. We follow the description  of the chain model by Korzeniewski in \cite[chapters 3, 4]{Korzen}.
The construction of the model for the total space uses the fact that the chain complex of the total space is \textit{filtered}, so the idea for this construction is similar to that of the Serre spectral sequence. This was the approach taken by Meyer in \cite{Meyerpaper} where he describes the intersection form of the total space of a surface bundle in terms of the intersection form on the base with coefficients in a local coefficient system.

\subsection*{Thanks} This paper comprises results from my PhD thesis \cite{mythesis} and I would like to thank my supervisor Prof Andrew Ranicki for suggesting the topic and for his constant support.  Moreover I would like to thank the School of Mathematics of the University of Edinburgh for providing an excellent research environment and the Max Planck Institute for Mathematics in Bonn for their hospitality during the time this paper was written. I would also like to thank the anonymous referee for many helpful comments and suggestions.

\section{The algebraic model for the signature of a fibration} \label{model}

The ideas and notation in the diagram in figure \ref{overview} have not yet been introduced. The purpose of the diagram is to give an overview of the key steps in the construction of a suitable model for a fibration $F \to E \to B$, which we will explain in the first section of this paper.

\begin{figure} \label{overview}
\labellist
\small\hair 2pt
\pinlabel ${\textnormal{Theorem \ref{E and X}}}$ at 130 445
\pinlabel ${E \textnormal{ is homotopy equivalent} }$ at 130 425
\pinlabel ${\textnormal{to a filtered CW-complex $X$} }$ at 130 405
\pinlabel ${\textnormal{Theorem \ref{chain-iso}}}$ at 340 445
\pinlabel ${\textnormal{There is an isomorphism} }$ at 350 425
\pinlabel ${G_*C(X) \cong C(\widetilde{B}) \otimes (C(\widetilde{F}), U) }$ at 350 405
\pinlabel ${\sigma(E) = \sigma (X) \in \bb{Z}}$ at 130 300
\pinlabel ${\sigma_{\bb{D}(\bb{Z})}(G_*C(X) ) }$ at 350 310
\pinlabel ${= \sigma_{\bb{D}(\bb{Z})}(C(\widetilde{B}) \otimes (C(F), U)) \in \bb{Z}}$ at 350 290
\pinlabel ${\textnormal{Proposition \ref{L-derived}}}$ at 240 195
\pinlabel ${\sigma (X) = \sigma_{\bb{D}(\bb{Z})}(G_*C(X) )  \in \bb{Z}}$ at 240 165
\pinlabel ${\textnormal{Theorem \ref{E and X} and Proposition \ref{two-functors}}}$ at 240 70
\pinlabel ${\sigma (E) = \sigma_{\bb{D}(\bb{Z})} ((C(\widetilde{B}), \phi) \otimes (C(F), \alpha, U))}$ at 240 45
\pinlabel ${ =\sigma ((C(\widetilde{B}), \phi) \otimes (H^m(F), \bar{\alpha}, \bar{ U}))}$ at 250 20
\endlabellist
\centering
\includegraphics[scale=0.77]{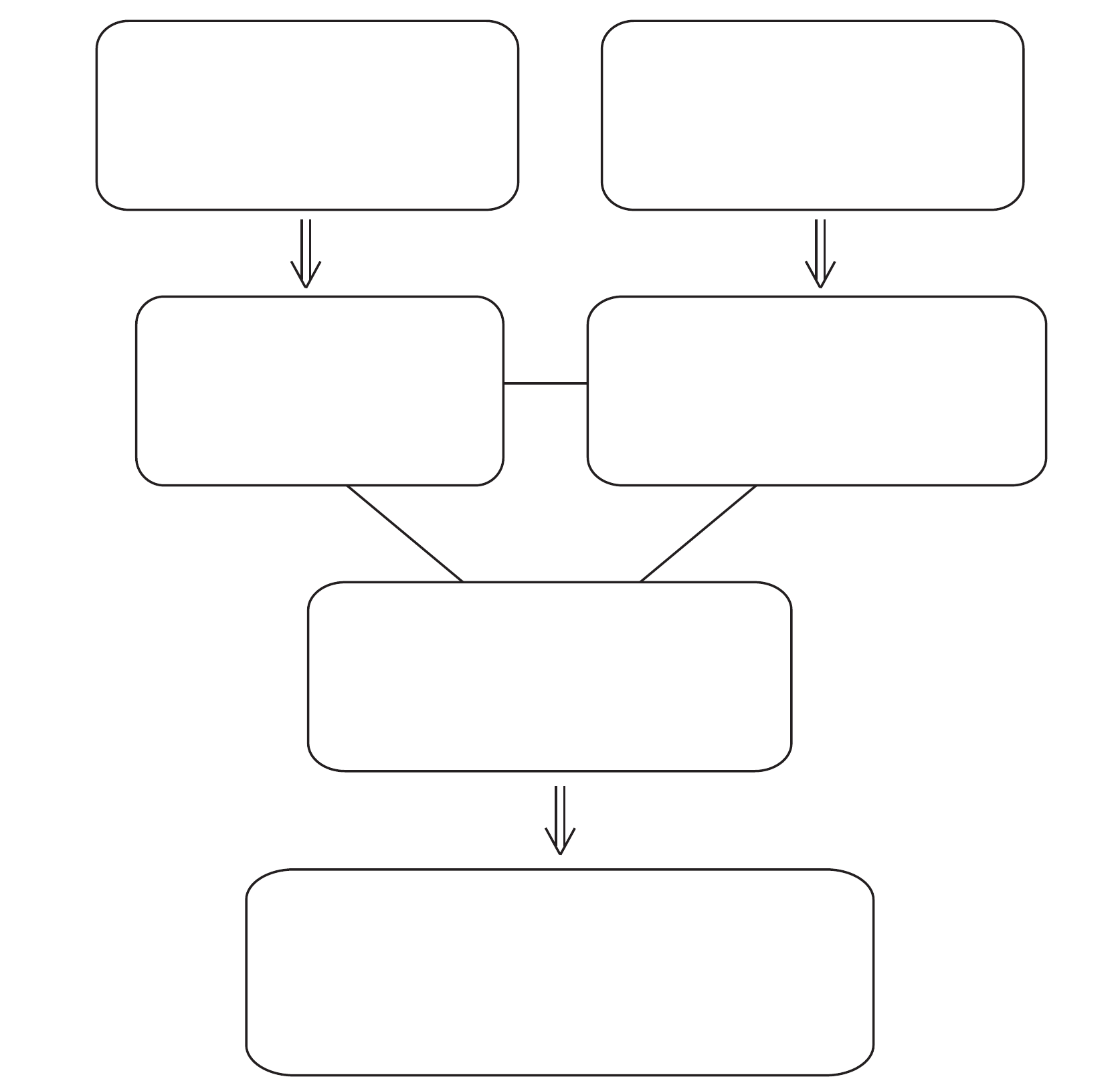}
\caption{Overview of the construction for the chain complex model of the total space.}
\label{overview}
\end{figure}

\subsection{Fibrations and $\Gamma$-fibrations} \label{fibrations}


We will consider Hurewicz fibrations where the base space $B$ will be a path connected CW-complex and for any point $b_0 \in B$, the fibre $F= p^{-1}(b_0)$ has the homotopy type of a finite CW-complex. Furthermore all the fibres have the same homotopy type.

In order to consider the most general possible setting for the construction of our chain complex algebraic model of a fibration we will need to use the definition of $\Gamma$-fibration from L\"uck \cite{Transfer-K}.

\begin{definition}{ \rm (L\"uck \cite[Definition 1.1]{Transfer-K})} \qua
Let $\Gamma$ be a discrete group. A \textbf{$\Gamma$-fibration} is a $\Gamma$-equivariant map $p' \co E' \to B'$ with $E'$ a $\Gamma$-space  such that $\Gamma$ acts trivially on $B'$ and $p'$ has the $\Gamma$-equivariant lifting property for any $\Gamma$-space $X$
\begin{displaymath}
\xymatrix{X \ar[r]^{f} \ar[d]_{x \mapsto (x, 0)} & E' \ar[d]^{p'} \\
X \times I \ar[ur]^{H'} \ar[r]^{F} & B'.}
\end{displaymath}
\end{definition}

We will consider a fibration $p\co E \to B$ with fibre $F = p^{-1}(b)$. The map $q \co \widetilde{E}\to E$ will denote the universal cover of $E$. The composition $p \circ q = \widetilde{p} \co \widetilde{E} \to B$ is a $\Gamma$-fibration with $\Gamma= \pe$. This fibration has fibre $\widetilde{p}^{-1}(b)=\widetilde{F}$, which is the cover of $F$ induced from the universal cover of $E$
\begin{displaymath}
\xymatrix{\widetilde{F} \ar[r] \ar[d] & \widetilde{E} \ar[d]^{q} \ar[dr]^{\widetilde{p}= p \circ q} \\
F \ar[r] & E \ar[r]^p & B.}
\end{displaymath}

\subsection{Filtrations}\label{Filtrations}

The argument to construct an algebraic model of the total space appropriate for the computation of the signature is motivated by the well known result that the total space of a fibration is filtered. The following Theorem \ref{E and X} was proved by Stasheff in \cite{Stasheff} using an inductive argument and by Schoen in \cite{Schoen} using a different argument based on the CW approximation theorem of Whitehead \cite{Whitehead}, see also Spanier \cite[p. 412]{Spanier}.
A similar argument to that of Stasheff \cite{Stasheff} was used by L\"uck in \cite{Transfer-K}, by Hambleton, Korzeniewski and Ranicki in \cite{modfour} and by Korzeniewski in  \cite{Korzen}.

\begin{theorem}{\rm (Stasheff and Schoen \cite{Stasheff, Schoen}) } \qua \label{E and X}
Let $F \to E \to B$ be a Hurewicz fibration where $B$ and $F$ have the homotopy type of CW-complexes, then the total space $E$ is weakly homotopy equivalent to a CW-complex $X$.
\end{theorem}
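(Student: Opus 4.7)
The plan is to build $X$ inductively over the skeleta of the base $B$, using the principle that any Hurewicz fibration over a contractible space is fibre homotopy equivalent to a trivial product. Since $F$ has the homotopy type of a CW-complex, I would first replace $p$ by a fibre homotopy equivalent fibration $p' : E' \to B$ whose fibre $F'$ is an actual CW-complex; then constructing a CW-complex $X$ weakly equivalent to $E'$ suffices.

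Write $B^{(n)}$ for the $n$-skeleton of $B$ and $E^{(n)} = (p')^{-1}(B^{(n)})$. I would inductively construct a filtered CW-complex $X = \bigcup_n X^{(n)}$ with compatible maps $f_n : X^{(n)} \to E^{(n)}$, each a weak equivalence. Start with $X^{(0)} = F' \times B^{(0)}$ and $f_0$ the evident product map. For the inductive step, for each $n$-cell $e^n_\alpha$ with characteristic map $\chi_\alpha : (D^n, S^{n-1}) \to (B^{(n)}, B^{(n-1)})$, pull $p'$ back over the contractible disc $D^n$ to obtain a fibration, which is fibre homotopy equivalent to $F' \times D^n$. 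This yields a map $\phi_\alpha : F' \times D^n \to E'$ covering $\chi_\alpha$, whose restriction $\phi_\alpha|_{F' \times S^{n-1}}$ lands in $E^{(n-1)}$. Composing with a homotopy inverse of $f_{n-1}$ (which exists because $X^{(n-1)}$ is CW and $f_{n-1}$ is a weak equivalence) produces an attaching map $F' \times S^{n-1} \to X^{(n-1)}$. Form $X^{(n)}$ by attaching $F' \times D^n$ along this map for each $\alpha$, and extend $f_{n-1}$ to $f_n$ using the $\phi_\alpha$.

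To check that each $f_n$ is a weak equivalence I would argue inductively: the attachment on the $X$ side and the inclusion $E^{(n-1)} \hookrightarrow E^{(n)}$ on the $E'$ side both replace, up to homotopy equivalence, a copy of $F' \times S^{n-1}$ by $F' \times D^n$. The five-lemma applied to the long exact sequences of homotopy groups of the pairs $(X^{(n)}, X^{(n-1)})$ and $(E^{(n)}, E^{(n-1)})$ then upgrades the weak equivalence from dimension $n-1$ to $n$. Taking the colimit in $n$ and using that spheres are compact (so their maps factor through finite stages) gives the desired weak equivalence $X \to E' \simeq E$.

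The main obstacle is coherence: the trivializations $\phi_\alpha$ chosen over different cells of $B$ need not literally agree on shared boundaries, so the naive attaching map built from $\phi_\alpha|_{F' \times S^{n-1}}$ will not match the structure already present on $X^{(n-1)}$. This is overcome using the homotopy extension property of the CW-pair $(F' \times D^n, F' \times S^{n-1})$ together with the homotopy lifting property of $p'$: one homotopes each $\phi_\alpha$, rel.\ an appropriate subspace of $F' \times S^{n-1}$, so that the boundary restriction matches on the nose the attaching map chosen for $X^{(n)}$. Organising these homotopies coherently across all cells and all dimensions is the technical heart of Stasheff's argument, and the same compatibilities ensure that the direct-limit map $f : X \to E'$ is well defined.
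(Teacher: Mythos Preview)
The paper does not actually supply a proof of this theorem; it only records that Stasheff proved it by an inductive argument over the skeleta of $B$, while Schoen gave an alternative proof via Whitehead's CW approximation theorem. Your sketch is precisely the Stasheff-style induction the paper alludes to, and the outline---trivialising the pullback over each cell, attaching $F'\times D^n$ along the resulting boundary map, and using HEP/HLP to repair coherence---is correct in substance, so there is nothing to compare beyond noting that your approach matches the first of the two cited proofs.
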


\subsubsection{Filtered spaces} \label{Filtered spaces}
Here we shall define what is meant by filtered spaces. We only consider compactly generated spaces, such as CW complexes.

\begin{definition}\label{522}
A \textbf{$k$-filtered topological space} $X$ is a topological space which is equipped with a series of subspaces
$$X_{-1}= \emptyset \subset X_0 \subset X_1 \subset \dots \subset X_k=X.$$
Here we will assume that each of the inclusions $X_j \subset X_{j+1}$ is a \textit{cofibration}.
\end{definition}
The condition that  $X_j \subset X_{j+1}$ is a  cofibration implies that the pair $(X_{j+1}, X_j)$ has the \textit{homotopy extension property.} (See Hatcher \cite[page 14]{Hatcher}). In particular, the inclusion of a CW subcomplex is a cofibration, and has the homotopy extension property.

\begin{definition} Let $X$ and $Y$ be two filtered spaces.
\begin{itemize}
\item[(i)] A map $f \co X \to Y$ is a \textbf{filtered map} if $f(X_j) \subset Y_j$.
\item[(ii)] A \textbf{filtered homotopy} between the maps $f \co X \to Y$ and $g \co X \to Y$ is a homotopy $H \co X \times I \to Y$ such that $H(X_j \times I) \subset Y_j$.
\item[(iii)] Two filtered spaces $X$ and $Y$ are {\bf filtered homotopy equivalent} if there exist filtered maps $f \co X \to Y$ and $h \co Y \to X$ such that $hf$ is filtered homotopic to $\mathrm{Id}_X$ and $fh$ is filtered homotopy equivalent to $\mathrm{Id}_Y$. In such a case the filtered map $f$ will be called a {\bf filtered homotopy equivalence} between $X$ and $Y$.
\end{itemize}
\end{definition}

In the context established in this section, namely that each of the inclusions $X_j \subset X_{j+1}$ (or $Y_j \subset Y_{j+1}$) is a cofibration, the following lemma holds.
\begin{lemma} A filtered map
 $f \co X \to Y$ is a filtered homotopy equivalence if and only if each $f_j\co X_j \to Y_j$  is a homotopy equivalence of unfiltered spaces.
\end{lemma}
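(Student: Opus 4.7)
The forward direction is immediate: if $h \co Y \to X$ is a filtered homotopy inverse, with filtered homotopies $H \co hf \simeq \mathrm{Id}_X$ and $K \co fh \simeq \mathrm{Id}_Y$, then restricting to level $j$ gives $h_j \co Y_j \to X_j$ together with homotopies $H|_{X_j \times I}$ and $K|_{Y_j \times I}$ landing in $X_j$ and $Y_j$ respectively (by the filtered condition), so each $f_j$ is an unfiltered homotopy equivalence.

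For the converse, I would build the filtered inverse $h \co Y \to X$ and the filtered homotopies by induction on $j$, exploiting that every inclusion $X_j \subset X_{j+1}$ and $Y_j \subset Y_{j+1}$ has the homotopy extension property. The base case is just that $f_0 \co X_0 \to Y_0$ is a homotopy equivalence. For the inductive step, assume we have $h_j \co Y_j \to X_j$ with filtered homotopies $H_j$ and $K_j$ as above; choose any (unfiltered) homotopy inverse $h' \co Y_{j+1} \to X_{j+1}$ of $f_{j+1}$. The key observation is that $h'|_{Y_j}$ and $h_j$, viewed as maps $Y_j \to X_{j+1}$, are homotopic: using that $f_{j+1}|_{X_j} = f_j$ together with $h'f_{j+1} \simeq \mathrm{Id}_{X_{j+1}}$ and $h_jf_j \simeq \mathrm{Id}_{X_j}$ in $X_{j+1}$, we get $h'f_j \simeq h_j f_j$ in $X_{j+1}$, and composing on the right with $h_j$ (using $f_j h_j \simeq \mathrm{Id}_{Y_j}$) yields $h'|_{Y_j} \simeq h_j$.

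Having produced this homotopy on $Y_j \times I$, I would apply the HEP for the cofibration $Y_j \hookrightarrow Y_{j+1}$ to extend it to a homotopy of $h'$ over $Y_{j+1}$, whose endpoint defines $h_{j+1} \co Y_{j+1} \to X_{j+1}$ satisfying $h_{j+1}|_{Y_j} = h_j$. The map $h_{j+1}$ is still a homotopy inverse to $f_{j+1}$ (homotopies preserve this property), so we may pick unfiltered homotopies $h_{j+1} f_{j+1} \simeq \mathrm{Id}_{X_{j+1}}$ and $f_{j+1} h_{j+1} \simeq \mathrm{Id}_{Y_{j+1}}$; a second application of the HEP (this time for $X_j \hookrightarrow X_{j+1}$ and $Y_j \hookrightarrow Y_{j+1}$, viewed as cofibrations of pairs with $I$) modifies these homotopies to agree with $H_j$ and $K_j$ on the previously-constructed level, giving filtered homotopies $H_{j+1}$ and $K_{j+1}$.

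The forward direction is transparent; the main obstacle in the reverse direction is the bookkeeping that every homotopy extension step can be performed \emph{compatibly} with the data already fixed at the previous filtration stage. This is precisely what the cofibration hypothesis $X_j \subset X_{j+1}$ (and the analogous one for $Y$) delivers, via iterated use of the homotopy extension property; without it the argument would collapse. Once the induction closes at $j = k$, the maps $h_j$ and homotopies $H_j$, $K_j$ assemble into a filtered homotopy inverse of $f$.
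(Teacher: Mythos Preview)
The paper does not actually prove this lemma; it simply cites Brown \cite[7.4.1]{Brown-book}. Your forward direction is correct, and your inductive strategy for the converse is the right one and is essentially the approach taken in Brown's book. The construction of $h_{j+1}$ extending $h_j$ via the HEP is fine.

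Where your argument is incomplete is the step in which you claim ``a second application of the HEP \ldots\ modifies these homotopies to agree with $H_j$ and $K_j$''. Suppose $G \co X_{j+1} \times I \to X_{j+1}$ is an arbitrary homotopy $h_{j+1}f_{j+1} \simeq \mathrm{Id}_{X_{j+1}}$. To deform $G$ to a homotopy that restricts to $H_j$ on $X_j \times I$ \emph{while keeping the endpoints fixed}, you would need $G|_{X_j \times I}$ and $H_j$ to be homotopic rel $X_j \times \partial I$ as maps into $X_{j+1}$. There is no reason for this to hold: two homotopies between the same pair of maps need not be homotopic rel endpoints (already for $A=\{ \pm 1\} \hookrightarrow S^1$ one finds self-maps fixing $A$ that are homotopic to the identity but not rel $A$). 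So a bare HEP application does not close the induction.

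What the cofibration hypothesis genuinely provides is the lemma that sits at the heart of Brown's 7.4.1 (also in May's \emph{Concise Course} and tom Dieck): if $A \hookrightarrow X$ is a cofibration and $\phi \co X \to X$ satisfies $\phi|_A = \mathrm{id}_A$ and is an ordinary homotopy equivalence, then $\phi$ is a homotopy equivalence \emph{under $A$}, i.e.\ admits an inverse $\psi$ with $\psi|_A = \mathrm{id}_A$ and homotopies $\phi\psi \simeq \mathrm{id}$, $\psi\phi \simeq \mathrm{id}$ rel $A$. The proof of this auxiliary lemma is itself non-trivial (it uses the HEP twice together with a careful reparametrisation argument), and it, or its relative version for maps of pairs, is precisely what one needs to propagate both the inverse and the homotopies up the filtration. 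Your outline is sound; this lemma is the missing ingredient and should be invoked or cited explicitly where you currently appeal to the HEP.
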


\begin{proof} See Brown \cite[7.4.1]{Brown-book}.
\end{proof}

\begin{definition}
A \textbf{$k$-filtered CW-complex $X$} is a CW-complex $X$ together with a series of subcomplexes
$$X_{-1}=\emptyset \subset X_0 \subset X_1 \subset \dots \subset X_k=X.$$
The cellular chain complex $C(X)$ is filtered with
$$F_jC(X) = C(X_j).$$
\end{definition}

A $k$-filtered CW-complex satisfies the conditions of Definition \ref{522}.

The main application will be a filtered complex in the context of Theorem \ref{E and X}, a fibration $E \xrightarrow{p} B$ with $B$ a CW-complex. We will consider $B$ with a filtration given by its skeleta which induces a filtered structure on $E$ by defining $E_k := p^{-1}(B_k),$ where $B_k$ is the $k$-th skeleton of $B$. Note that here the inclusions $E_{k-1} \subset E_k$ are cofibrations.

\subsubsection{Filtered complexes} \label{Filtered-cx}

Here $\bb{A}$ will denote an additive category.
\begin{definition}\label{FM}
\begin{itemize}
\item[(i)] Let $M$ be an object in the additive category $\bb{A}$ and let $M$ have a direct sum decomposition
$$M = M_0 \oplus M_1 \oplus \dots \oplus M_k,$$
so that $M$ has a filtration of length $k$
\begin{gather*}
F_{-1}M=0 \subseteq F_0M \subseteq F_1M \subseteq \dots \subseteq F_kM = M \\
\tag*{\text{where}} F_iM = M_0 \oplus M_1 \oplus \dots \oplus M_i.
\end{gather*}
A \textbf{$k$-filtered object $F_*M \in \bb{A}$} is the object $M \in \bb{A}$ together with the direct sum decomposition of $M$.

\item[(ii)]
Let $F_*M$ and $F_*N$ be two $k$-filtered objects in  the additive category $\bb{A}$.
A \textbf{filtered morphism} is given by
$$
f = \left( \begin{array}{ccccc} f_0 & f_1 & f_2 & \dots & f_k \\
                                              0  & f_0 & f_1 & \dots & f_{k-1} \\
                                              0 &    0  & f_0 & \dots & f_{k-2} \\
                                              \vdots & \vdots & \ddots & \vdots& \vdots \\
                                               0 & 0 & 0 & \dots & f_0 \end{array} \right) \co M = \bigoplus^{k}_{s=0} M_s \to N= \bigoplus^{k}_{s=0} N_s.
$$
\end{itemize}

\end{definition}

In the context of chain complexes in the additive category $\bb{A}$, a $k$-filtered complex $F_*C$ is defined as follows.

\begin{definition}\label{filtered-chain-complex}
Let $C \co C_n \to \dots \to C_r \to C_{r-1} \to \dots  \to C_{0}$ be a chain complex, and let each $C_r$ be $k$-filtered, that is, $$C_r = C_{r, 0} \oplus C_{r, 1} \oplus \dots \oplus C_{r, s} \oplus \dots \oplus C_{r, k} = \bigoplus^k_{s=0} C_{r,s}.$$
Then a \textbf{$k$-filtered complex $F_*C$} in $\bb{A}$ is a finite chain complex $C$ in $\bb{A}$ where each of the chain groups is $k$-filtered and each of the differentials $d\co F_*C_r \xrightarrow{d} F_*C_{r-1}$ is a filtered morphism. The matrix components of $d$ are the maps $C_{r,s} \xrightarrow{d_j} C_{r-1, s-j}.$

\end{definition}

$F_sC_r = \sum\limits^s_{i=0}C_{r,i}$, and $F_sC_r/F_{s-1}C_r = C_{r,s}$. So $C_{r,s}$ represents the $s$-th filtration quotient of $C_r$.

Tensor products of filtered complexes are carefully described by Hambleton, Korzeniewski and Ranicki in \cite[section 12.2]{modfour}.
\begin{definition} The \textbf{tensor product of filtered chain complexes} $F_*C$ and $F_*D$ over $\bb{A}(R)$ and $\bb{A}(S)$, where $R$ and $S$ are rings, is itself a filtered complex
$$F_k(C \otimes_{\bb{Z}} D) = \bigoplus_{i+j = k} F_iC \otimes_{\bb{Z}} F_jD.$$
\end{definition}

\subsection{The associated complex of a filtered complex} \label{derived section}

\subsubsection{A complex in the derived category $\bb{D}(\bb{A})$} \label{complex-derived}

Given an additive category $\bb{A}$ we write $\bb{D}(\bb{A})$ for the homotopy category of $\bb{A}$, which is the additive category of finite chain complexes in $\bb{A}$ and chain homotopy classes of chain maps with
$$\textnormal{Hom}_{\bb{D(A)}}(C, D) = H_0(\textnormal{Hom}_\bb{A} (C, D)).$$
See L\"uck and Ranicki \cite[Definition 1.5]{SurTransfer}.

 A $k$-filtered complex $F_*C $  in $\bb{A}$ has an associated chain complex in the derived category $\bbDA$. The associated complex of a $k$-filtered space is denoted by $G_*(C)$  and is $k$-dimensional
\begin{displaymath}
G_*(C)\co G_k (C)\to \dots \to G_r(C) \xrightarrow{d_*} G_{r-1}(C) \to \dots  \to G_{0}(C).
\end{displaymath}
The \textit{morphisms} in $G_*(C)$ are given by the (filtered) differentials
$$d_*=(-)^s d_1 \co G_k(C)_s = C_{k+s, r}\to G_k(C)_{s-1}=C_{k+s-1, r-1}.$$
Each of the individual terms $G_r(C)$ is an \textit{object} in $\bbDA$, hence a chain complex in $\bb{A}$
$$G_r(C) \co \dots \to G_r(C)_s \to G_r(C)_{s-1} \to G_r(C)_{s-2} \to \dots .$$
As a chain complex, $G_r(C)$ has differentials
\begin{gather*}d_{G_r(C)} = d_0 \co G_r(C)_s = C_{r+s, r} \to G_r(C)_{s-1}= C_{r+s-1, r} \\
\tag*{\text{such that}}
(d_{G_r(C)})^2 = d_0^2 =0 \co G_r(C)_s= C_{r+s, r} \to G_r(C)_{s-2}=C_{r+s-2, r}. \end{gather*}
Note that the differentials of a filtered complex $d\co C_k \to C_{k-1}$ are such that  $d^2=0 \co C_r \to C_{r-2}$. These differentials are upper triangular matrices. If we write $d_0$ for the diagonal, $d_1$ for the superdiagonal we obtain some relations:
\begin{gather*}
d_0^2 = 0 \co C_{r, s} \to C_{r-2, s} \\
d_0d_1 + d_1 d_0=0 \co C_{r,s} \to C_{r-2, s-1} \\
(d_1)^2 + d_0 d_2 + d_2d_0 =0 \co C_{r,s} \to C_{r-2, s-2},
\end{gather*}
up to sign.
These are the required relations for the objects in the associated complex to be in $\bbDA$ and for the differentials to be morphisms in $\bbDA$ with square $0$.

\begin{example}
For a filtered CW-complex $X$
$$G_kC(X) = S^{-k}C(X_k, X_{k-1}).$$
\end{example}

\subsubsection{Duality for a filtered complex and its associated complex in the derived category} \label{Duality}
Let $\mathbb{A}$ be an additive category with involution. Then $\bbD_n(\mathbb{A})$ is the additive category of $n$-dimensional chain complexes in $\mathbb{A}$ and chain homotopy classes of chain maps, with $n$-duality involution $T \co \bbD_n(\mathbb{A}) \to \bbD_n(\mathbb{A}) ; C \mapsto C^{n-*}.$

The definition of a filtered dual chain complex in an additive category with involution $\bb{A}$ is given by Hambleton, Korzeniewski and Ranicki in \cite[section 12.6]{modfour}. 
Here we review the definition of the dual $\Fd_*C$ of a $k$-filtered chain complex $F_*C$.

\begin{definition}{(Hambleton, Korzeniewski and Ranicki \cite[Definition 12.21 (ii)]{modfour})} \qua
Let $F_*C$ be a $k$-filtered $n$-dimensional chain complex in $\bb{A}$.

\begin{itemize}
\item[(i)] The filtered dual $\Fd_*C$ of $F_*C$ is the $k$-filtered complex with modules $$(\Fd_*C)_{r, s} = C^*_{n-r, k-s},$$
where $0\leq r \leq n$ and $0 \leq s \leq k$.
\item[(ii)] The dual of the differential $C_r \xrightarrow{d} C_{r-1}$ is given by $d^{\textnormal{dual}}\co (\Fd_*C)_r \to (\Fd_*C)_{r-1}$. This dual differential is also $k$-filtered
$$C^*_{n-r, k-s} \xrightarrow{(-)^{r+s +j(n+r)} d^*_j}C^*_{n-(r-1), k-(s-j)}.$$
\end{itemize}
\end{definition}

We refer the reader to Hambleton, Korzeniewski and Ranicki \cite[Section 12.6]{modfour} for more details on filtered complexes and their associated graded complexes. The application in this paper is for the $k$-filtered chain complex $F_*C(E)$ of the total space of a fibre bundle $F \to E \to B^k$ with $\textnormal{dim}~ B= k.$

\begin{lemma}{(Hambleton, Korzeniewski and Ranicki \cite[12.23]{modfour})} \label{duality-graded} \qua Let $F_*C$ be a $k$-filtered $(n+k)$-dimensional chain complex with associated complex $G_*(C)$. The associated complex $G_*(F^{\textnormal{dual}}_*C) = (G_*(C))^*$ is the $(n, k)$-dual of $G_*(C)$.
\end{lemma}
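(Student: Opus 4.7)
The plan is to prove this by a direct unpacking of definitions: both sides of the claimed equality refer to bigraded collections of objects in $\bb{A}$ indexed by $(r,s)$, equipped with two commuting differentials (one internal to each term, one between terms), so I will simply verify that the bigraded modules agree and that the two families of differentials agree up to the standard Koszul signs.

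First I would check the modules. Starting from the definition $(\Fd_*C)_{r,s} = C^*_{(n+k)-r,\,k-s}$ (applied to our $(n+k)$-dimensional $k$-filtered complex), I compute
\[
G_r(\Fd_*C)_s \;=\; (\Fd_*C)_{r+s,\,r} \;=\; C^*_{(n+k)-(r+s),\,k-r}.
\]
On the other hand, the $(n,k)$-dual of $G_*(C)$, regarded as a $k$-dimensional complex in $\bbDA$ whose terms are $n$-dimensional complexes, has $r$-th object $(G_{k-r}(C))^{n-*}$, and so
\[
\bigl((G_*(C))^{n,k-*}\bigr)_r{}_s \;=\; G_{k-r}(C)^{*}_{n-s} \;=\; C^*_{(k-r)+(n-s),\,k-r}.
\]
These coincide, so the underlying objects match tautologically.

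Next I would check the two differentials. The internal differential of $G_r(\Fd_*C)$ as a chain complex in $\bb{A}$ is by definition the diagonal component $d_0$ of $\Fd_*C$, which by the dual-differential formula is $\pm d_0^*$ applied to the correct entries of $F_*C$. After re-indexing this is precisely the internal differential of $(G_{k-r}(C))^{n-*}$, namely the $n$-dual of the internal differential $d_0$ of $G_{k-r}(C)$. Similarly, the differential between successive terms of $G_*(\Fd_*C)$ is the superdiagonal component $\pm d_1^{\textnormal{dual}}$ of $\Fd_*C$, which under the re-indexing above matches the morphism in $\bbDA$ induced by the dual of the $d_1$-differential of $G_*(C)$. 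Both matches are imposed by the definition of $\Fd_*C$, whose sign conventions from \cite{modfour} are built specifically so that taking associated graded commutes with duality.

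The one place where care is required, and which I expect to be the only real obstacle, is the sign bookkeeping: the associated-complex differential carries a factor $(-)^s$, and the filtered dual carries a factor $(-)^{r+s+j(n+r)}$. I would verify that when these are composed according to the formula $G_r(\Fd_*C)_s = (\Fd_*C)_{r+s,r}$ and compared with the signs in the $(n,k)$-duality on $\bbD_n(\bb{A})$-valued complexes, the two sign conventions agree. Once this routine check is made, the lemma follows, and indeed it is essentially the content (spelled out bigrading-wise) of \cite[Section 12.6]{modfour}, to which I would refer for the full sign verification.
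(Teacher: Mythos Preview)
The paper does not actually give a proof of this lemma: it is stated with attribution to Hambleton, Korzeniewski and Ranicki \cite[12.23]{modfour} and followed only by a remark on terminology, so there is nothing to compare against beyond the cited reference. Your direct-verification approach---matching the bigraded modules via $G_r(\Fd_*C)_s = (\Fd_*C)_{r+s,r} = C^*_{(n+k)-(r+s),\,k-r}$ and then checking that the $d_0$ and $d_1$ components agree with those of the $(n,k)$-dual up to the prescribed signs---is exactly the right strategy, and is essentially what \cite[Section~12.6]{modfour} carries out; since you already defer the sign bookkeeping to that source, your proposal is in effect the same as the paper's treatment.
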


\begin{rmk}
Note that Hambleton, Korzeniewski and Ranicki use the terminology $n$-dual of $G_*(C)$ instead of $(n, k)$-dual. We believe that this latter terminology is more precise since the construction of the dual of $G_*(C)$ uses $n$-duality and in each fixed degree $0, 1,\dots, k$ and then reverses the order in $\{0, 1, \dots, k \}$.
\end{rmk}

\subsection{The transfer functor associated to a fibration}\label{transfer-sec}

Let $F \to E \to B$ be a fibration where $B$ is based and path connected. It is possible to model the fibration by a map of topological monoids $\Omega B \to \textnormal{Map}(F, F),$ where $\Omega B$ is grouplike. Broadly speaking, the fibration can be recovered in the form
$$EB  \times_{\Omega B} F \longrightarrow EB \times \{pt\},$$
where $EB$ is a contractible space with an action of $\Omega B$ which is free. In particular we have $EB \times_{\Omega B} \{pt \} \simeq B.$

\begin{figure}
\labellist
\small\hair 2pt
\pinlabel ${h}$ at 126 117
\pinlabel ${p^{-1}(x) = F}$ at 300 121
\pinlabel ${F}$ at 27 93
\pinlabel ${F \times I}$ at 57 93
\pinlabel ${T(F \xrightarrow{h} F)}$ at 170 93
\pinlabel ${E}$ at 367 93
\pinlabel ${p}$ at 134 61
\pinlabel ${p}$ at 64 61
\pinlabel ${p}$ at 315 61
\pinlabel ${x}$ at 122 41
\pinlabel ${x}$ at 279 38
\pinlabel ${I}$ at 27 32
\pinlabel ${S^1}$ at 107 32
\pinlabel ${\omega}$ at 185 38
\pinlabel ${B}$ at 367 32
\endlabellist
\centering
\includegraphics[scale=0.95]{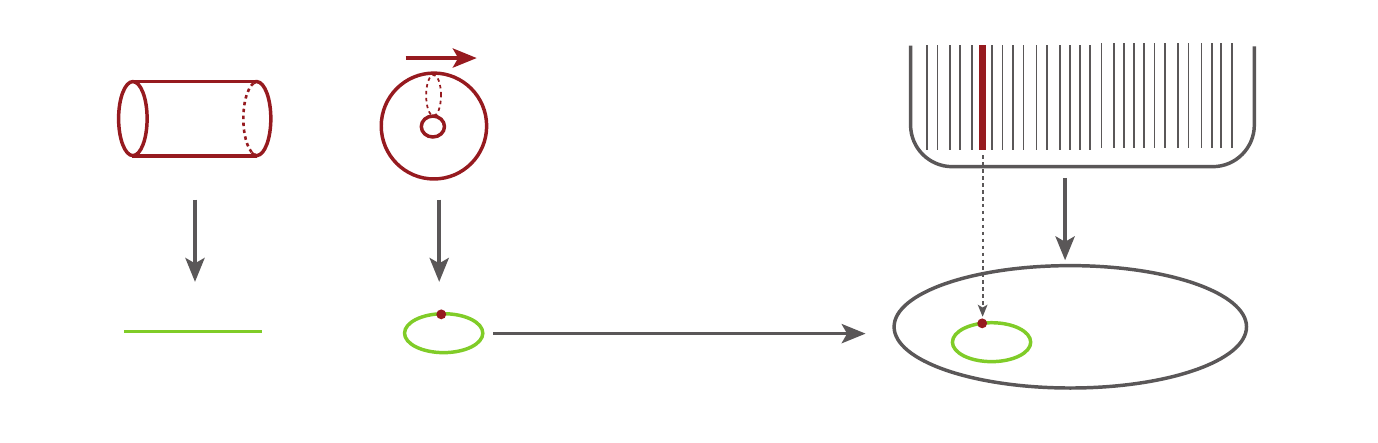}
\caption{Monodromy automorphism}
\label{monodromy}
\end{figure}

For any loop $\omega \co S^1 \to B$ the pullback $F \to \omega^*E \to S^1 $ is the mapping torus $$\omega^*E~=~T(h\co F \to F)~=~F \times I/\{(y,0) \sim (h(y),1)\,\vert\,y \in F\}$$
of the monodromy automorphism $h= U(\omega)\co F \to F.$ (See Figure \ref{monodromy}).

 We are considering the action of $\Omega B$ on the fibre $F$
$$\Omega B \to \textnormal{Map}(F, F); ~ \omega \mapsto h.$$
If $\omega$ is a loop in $\Omega B$, the homotopy class of its corresponding map $h_{\omega}\co F \to F$ only depends on the homotopy class of $\omega \in \pi_1(B)$. So there is an action of the fundamental group $\pi_1(B)$ on the fibre $F$ given by the fibre transport
$$u\co \pi_1(B) \to [F, F]^{op}.$$
This determines
$$ u \co \pi_1(B) \to [C(F), C(F)]^{op}.$$
which extends to a ring morphism
\begin{equation}\label{ring-morph}
U \co H_0(\Omega B) = \bb{Z}[\pi_1(B)] \to H_0(\textnormal{Hom}_{\bb{Z}}(C(F), C(F)))^{op}
\end{equation}
from $\bb{Z}[\pi_1(B)]$ to the opposite of the ring of chain homotopy classes of $\zz$-module chain maps $h\co C(F) \to C(F)$.
If $F$ is a Poincar\'e space then $U$ is a morphism of rings with involution. 
Let $\alpha$ be the symmetric structure on $C(F).$ Then the involution on $H_0(\textnormal{Hom}_{\mathbb{Z}} (C(F), C(F))^{\textnormal{op}})$ is defined by $T(h) = \alpha_0^{-1} h^* \alpha_0,$
where $\alpha_0 \co C(F)^{n-*} \to C(F)$ is the zeroth part of the symmetric structure $\alpha.$


It is also possible to give a $\pi_1(E)$-equivariant version of the fibre transport by considering the $\pi_1(E)$-fibration  $\wtE \to B$, with fibres $\wtF$ the pullback to $F$ of the universal cover $\wtE$ of $E$.
In this case the ring morphism is given by
\begin{equation}\label{equiv-ring-morph}
U \co H_0(\Omega B) = \bb{Z}[\pi_1(B)] \to H_0(\textnormal{Hom}_{\bb{Z}[\pi_1(E)]}(C(\wtF), C(\wtF)))^{op}.
\end{equation}

The ring morphisms from \eqref{ring-morph} and \eqref{equiv-ring-morph} induce transfer functors as we now explain.

The idea of the transfer functor associated to a fibration  $F \to E \xrightarrow{p} B$ was developed by L\"uck and Ranicki in \cite{SurTransfer}.

\begin{definition}{(L\"uck and Ranicki \cite[Definition 1.1]{SurTransfer})} \qua A \textbf{representation} $(A, U)$ of a ring $R$ in an additive category $\bb{A}$ is an object $A$ in $\bb{A}$ together with a morphism of rings $U \co R \to \textnormal{Hom}_{\bb{A}}(A, A)^{op}.$
\end{definition}

Following the notation used by L\"uck and Ranicki in \cite{SurTransfer} we will denote by $\bb{B}(R)$ the additive category of based finitely generated free $R$-modules, where $R$ is an associative ring with unity. As above, $\bb{D(A)}$ denotes the derived homotopy category of the additive category $\bb{A}$.
From L\"uck and Ranicki \cite[Definition 1.5]{SurTransfer} we know that
$$\textnormal{Hom}_{\bb{D(A)}}(C, D) = H_0(\textnormal{Hom}_{\bb{A}}(C, D)).$$

\begin{definition}{(L\"uck and Ranicki \cite{SurTransfer})} \qua
A representation $(A, U)$ determines a \textbf{transfer functor} $-\otimes (A, U) = F \co \bb{B}(R) \to \bb{D(A)}$ as follows
$$
\begin{array}{l}F(R^n) = A^n, \\
F\left((a_{ij} \co R^n \to R^m) \right) = \left(U(a_{ij})\right) \co A^n \to A^m.
\end{array}
$$
\end{definition}
The ring morphism induced by the fibre transport
$$U\co \bb{Z}[\pi_1(B)] \to H_0(\textnormal{Hom}_{\bb{Z}[\pi_1(E)]} (C(\wtF), C(\wtF)))^{op}$$
determines the representation $(C(\wtF), U)$ of the ring $\bb{Z}[\pi_1(B)]$ into the derived category $\bb{D}(\bb{B}(\bb{Z}[\pi_1(E)]))$ and hence the following functor
$$-\otimes (C(\wtF), U) \co \bb{B}(\bb{Z}[\pi_1(B)]) \to \bb{D}(\bb{B}(\bb{Z}[\pi_1(E)])).$$

Now let $\bb{A}$ denote an additive category with involution as in L\"uck and Ranicki \cite{SurTransfer}.
\begin{definition}\label{sym-rep}{(L\"uck and Ranicki \cite{SurTransfer})}  \qua
A \textbf{symmetric representation} $(A, \alpha, U)$ of a ring with involution $R$ in an additive category with involution $\bb{A}$ is a nonsingular symmetric form $(A, \alpha)$ in $\bb{A}$ together with a morphism of rings with involution $U \co R \to \mathrm{Hom}_{\bb{A}}(A, A)^{op}$.
\end{definition}
For a fibration $F^{2m} \to E \to B^{2n}$ with $F$ Poincar\'e there is defined a symmetric representation $(C(\tilde{B}), \alpha, U)$ in the derived  category $\bb{D}_{2m}(\bb{B}(\bb{Z}[\pi_1(E)]))$, where $U$ is a  ring morphism of rings with involution
$$U\co \bb{Z}[\pi_1(B)] \to H_0(\textnormal{Hom}_{\bb{Z}[\pi_1(E)]} (C(\wtF), C(\wtF)))^{op}.$$
The symmetric representation $(C(\tilde{B}), \alpha, U)$ determines the functor
$$-\otimes (C(\wtF), \alpha, U) \co \bb{B}(\bb{Z}[\pi_1(B)]) \to \bb{D}_{2m}(\bb{B}(\bb{Z}[\pi_1(E)])).$$
This functor induces maps in the $L$-groups.
It is defined and used by L\"uck and Ranicki in \cite{SurTransfer} to construct a transfer map in the quadratic $L$-groups associated to a fibration $F^{2m} \to E^{2n+2m} \to B^{2n}$
$$L_{2n}(\bb{Z}[\pi_1(\widetilde{B})]) \xrightarrow{-\otimes (C(\wtF), \alpha, U)} L_{2n}(\bb{D}_{2m}(\bb{B}(\bb{Z}[\pi_1(E)])) \xrightarrow{\mu} L_{2n+2m}(\bb{Z}[\pi_1(E)]),  $$
where $\mu$ is the Morita map also defined by L\"uck and Ranicki in \cite{SurTransfer}.

Note that $\bb{D}_{2m}(\bb{B}(\bb{Z}[\pi_1(E)]))$ is an additive category with involution. The symmetric $L$-groups of such categories are defined by Ranicki in detail in \cite[Chapter 1]{bluebook}.
In symmetric $L$-theory the transfer functor still induces a map
$$L^{2n}(\bb{Z}[\pi_1(\widetilde{B})]) \xrightarrow{-\otimes (C(\wtF), \alpha, U)} L^{2n}(\bb{D}_{2m}(\bb{B}(\bb{Z}[\pi_1(E)]))),$$
but there is no Morita map, unlike the situation in quadratic $L$-theory.

In our context, since we are only interested in obtaining information about the ordinary signature of $E$, so we can forget about $\pi_1(E)$. That is, we only need to work with the map which we will make precise in the section \ref{Filtered-signature}.
$$L^{2n}(\bb{Z}[\pi_1(B)])\xrightarrow{-\otimes (C(F), \alpha, U)} L^{2n}(\bb{D}_{2m}\bb{Z})  \xrightarrow{\sigma_{\bb{D}_{2m} {\bb{Z}}}} \bb{Z}.$$

\subsection{Symmetric structures}
Let $W$ be the standard free $\bb{Z}[\bb{Z}_2]$ resolution of $\bb{Z}$
$$W\co \dots \to \bb{Z}[\bb{Z}_2] \xrightarrow{1-T} \bb{Z}[\bb{Z}_2] \xrightarrow{1+T}   \bb{Z}[\bb{Z}_2]  \xrightarrow{1-T}   \bb{Z}[\bb{Z}_2] \to 0.$$
Let $C$ be a finite f.g. left $R$-module chain complex, where $R$ is a ring with involution. Then its transpose $R$-module is the right $R$-module denoted by $C^{t}$ with the same additive group.

A \textbf{symmetric structure} of degree $k$ on $C$ is a chain map $W \to C^t \otimes_R C$ raising degrees by $k$ which is equivariant for the actions of $\zz_2$.


\subsubsection{Tensor products of chain complexes in the derived category}

For the definition of the tensor product in a derived  category $\mathbb{D}_n(\mathbb{A})$ we refer to Ranicki \cite[page 27]{bluebook}. This reference discusses the tensor product of objects in an additive category with chain duality.
We shall only need the special case when the additive category is the derived category  $\bbD_n (\mathbb{A})$  with the $n$-duality involution $C \mapsto C^{n-*}$.

\begin{definition}\label{Ran-tensor-prod} For any objects $M$, $N$ in the additive category with involution $\bbD_n (\mathbb{A})$ define the abelian group
$$M \otimes_{\bbD_n (\mathbb{A})} N = \textnormal{Hom}_{\bb{D}_n(A)}(M^*, N),$$
where $M^*$ is the dual of $M$.
\end{definition}

In the application in this paper we will consider a fibration $F^{2m} \to E \to B^{2n}$ with $F$ and $B$ Poincar\'e. We will construct chain complex models for the signature of the total space, which are symmetric chain complexes in $L^{2n}(\mathbb{D}_{m}(\mathbb{Z})).$



A typical representative of an element of $L^{2n}(\bb{D}_{2m} (\ZZ))$ is a chain complex $C$ made up of ojects in $\bb{D}_{2m} (\ZZ)$, i.e., it is a diagram in the category $\bb{D}_{2m} (\ZZ)$ of the shape
$$\bullet \leftarrow \bullet \leftarrow \bullet \leftarrow \dots \leftarrow \bullet \leftarrow \bullet $$
where the composition of any two consecutive arrow is zero, together with a $2n$-cycle
$$ \phi \in \textnormal{Hom}_{\ZZ[\ZZ_2]}(W, C \otimes_{\bb{D}_{2m} (\ZZ)} C),$$
where 
$$C \otimes_{\bb{D}_{2m} (\ZZ)} C:= \textnormal{Hom}_{\bb{D}_{2m} (\ZZ)}(C^{-*}, C).$$




\subsubsection{Symmetric structure induced by the representation associated to a fibration}

In this section we will see that the symmetric representation functor associated to a fibration  $F^{2m} \to E \to B^{2n}$ induces a symmetric structure in the derived additive category with involution $\bb{D}_{2m}(\zz)$. The details of this construction can be found in \cite[Theorems 3.11, 4.5]{Korzen}.

The symmetric representation
\begin{equation}\label{sym-rep} U\co \bb{Z}[\pi_1(B)] \to H_0(\textnormal{Hom}_{\bb{Z}} (C(F), C(F))^{op})\end{equation}
induces a natural morphism
$$\textnormal{Hom}_{\zz[\pi_1(B)]}\left(C(\widetilde{B})^*,  C(\widetilde{B})\right) \xrightarrow{U}\textnormal{Hom}_{\bb{D}_{2m}(\zz)}\left((C(\widetilde{B}) \otimes (C(F), U))^*,  C(\widetilde{B}) \otimes (C(F), U)\right).$$

Let $W$ be the standard free $\bb{Z}[\bb{Z}_2]$ resolution of $\bb{Z}.$
The action of $T \in \zz_2$ on $\textnormal{Hom}_{\zz[\pi_1(B)]}\left(C(\widetilde{B})^*,  C(\widetilde{B})\right)$ is preserved  under the natural transformation $U$, so the induction map preserves the duality,
\begin{equation}\label{chain-map}\begin{split}
\textnormal{Hom}_{\bb{Z}[\bb{Z}_2]}\left(W, \textnormal{Hom}_{\zz[\pi_1(B)]}\left(C(\widetilde{B})^*,  C(\widetilde{B})\right)\right)  \xrightarrow{U} \\ \textnormal{Hom}_{\bb{Z}[\bb{Z}_2]}\left(W, \textnormal{Hom}_{\bb{D}_{2m}(\zz)}\left((C(\widetilde{B}) \otimes (C(F), U))^*,  C(\widetilde{B}) \otimes (C(F), U)\right)\right).
\end{split}
\end{equation}
This induces a map in homology.

\begin{definition}\label{symm-str-induced-by-U}
The $2n$-dimensional derived symmetric structure on the associated chain complex in $\bb{D}_{2m}( \bb{Z})$ of the filtered complex $C(\widetilde{B}) \otimes (C(F), \alpha, U)$ induced by the symmetric representation $(C(F), \alpha, U)$ is the cycle in $$H_{2n}\left(\textnormal{Hom}_{\bb{Z}[\bb{Z}_2]}\left(W, \textnormal{Hom}_{\bb{D}_{2m}(\zz)}\left((C(\widetilde{B}) \otimes (C(F), U))^*,  C(\widetilde{B}) \otimes (C(F), U)\right)\right)\right).$$
obtained by evaluating the chain map in \eqref{chain-map} on the preferred cycle there, namely the symmetric structure on $C(\widetilde{B})$.
\end{definition}

\begin{definition}
The $2n$-dimensional $Q$-group $Q^{2n}(C(\widetilde{B}) \otimes (C(F), U))$ is the abelian group of equivalence classes of  derived $2n$-dimensional symmetric structures on $C(\widetilde{B}) \otimes (C(F),\alpha, U)$,
\begin{equation*}\begin{split}Q^{2n}(C(\widetilde{B}) \otimes (C(F), \alpha, U)) =  \\ H_{2n}\left(\textnormal{Hom}_{\bb{Z}[\bb{Z}_2]}\left(W, \textnormal{Hom}_{\bb{D}_{2m}(\zz)}\left((C(\widetilde{B}) \otimes (C(F), U))^*,  C(\widetilde{B}) \otimes (C(F), U)\right)\right)\right). \end{split}\end{equation*}
\end{definition}

The symmetric  representation $(C(F), \alpha, U)$ induces morphisms of abelian groups which preserve the duality. Therefore it induces a morphism of the $Q$-groups,
$$Q^{2n}(C(\widetilde{B})) \to Q^{2n}(C(\widetilde{B}) \otimes (C(F), \alpha, U)). $$
Hence there is a map in $L$-theory
$$-\otimes (C(F), \alpha, U) \co L^{2n}(\bb{Z}[\pi_1(B)])\rightarrow L^{2n}(\bb{D}_{2m}\bb{Z}),$$
and we can construct a symmetric chain complex $(C(\widetilde{B}), \phi) \otimes  (C(F), \alpha, U)$ over $\bb{D}_{2m}(\zz)$, i.e., a chain complex made up of objects in $\bb{D}_{2m}(\zz)$.

\subsubsection{Symmetric structure on the graded complex associated to the filtered complex $X$}\label{Iso-derived}
In this paper we are considering a fibration $F^{2m} \to E \xrightarrow{p} B^{2n}$ with $F$ and $B$ Poincar\'e.
Since $E \xrightarrow{p} B$ is a fibration, then $E \times E \xrightarrow{p \times p} B \times B$ is also a fibration.
There is a commutative diagram of the diagonal maps  $\Delta^E \co E \to E \times E$ and $\Delta^B \co B \to B \times B$,

\begin{displaymath}
\xymatrix{ 
E \ar[r]^-{\Delta^E}  \ar[d]_p & E \times E \ar[d]^{p \times p} \\
 B \ar[r]^-{\Delta^B} & B \times B.}
\end{displaymath}

From Theorem \ref{E and X} we know that the total space of a fibration is weakly homotopy equivalent to a filtered CW-complex $X$ with the filtration $X_0 \subset X_1 \subset \dots  \subset X_{2n} = X$ induced from the cellular structure of the base space. Up to homotopy we can define the the following composition of maps,
$$
\xymatrix{
X \ar@/^2pc/[rrrr]^{\Delta^X} \ar[r] ^-h & E \ar[r]^-{\Delta^E} & E \times E  \ar[rr]^-{h^{-1} \times h^{-1}}& & X \times X.
}$$


The chain approximation $C(X)\xrightarrow{\Delta_{C(X)}} C(X)\otimes_{\bb{Z}} C(X)$ can be chosen to be natural, see Bredon \cite[Chapter VI. 16]{Bredon}. Naturality means that for every inclusion $X_j \hookrightarrow X_{j+1}$ there is defined a commutative square

\begin{displaymath}
\xymatrix{ 
F_jC(X)_r = C(X_j)_r \ar[r]^-{\Delta_i}  \ar[d] & (C(X_{j}) \otimes_{\bb{Z}} C(X_j))_{r+i} \ar[d] \\
 C(X_{j+1})_r \ar[r]^-{\Delta_i} & (C(X_{j+1}) \otimes_{\bb{Z}} C(X_{j+1}))_{r+i}.}
\end{displaymath}

Thus, by the naturality of the diagonal chain approximation, we can assume that $\Delta_{C(X)}$ preserves the filtration. 


Since $X$ is a filtered complex over $\bb{Z}$, it has (as explained in Section \ref{complex-derived}) an associated complex $G_*C(X) = F_*C(X)/F_{*-1}C(X) $ in $\bb{D}_{2m}(\bb{Z})$, where $\bb{D}_{2m}(\zz)$ is the derived  category $\bb{D}(\zz)$ with the $2m$-duality involution $C \mapsto C^{2m-*}.$ So there is a chain approximation on the graded chain complex
$$G_*C(X) \xrightarrow{G_*C(\Delta^X )} G_*(C(X) \otimes_{\bb{Z}} C(X)).$$





\begin{theorem}{\rm (Korzeniewski \cite[Theorem 4.5]{Korzen})}\qua \label{chain-iso} Given a fibration $F^{2m} \to E^{2m+2n} \to B^{2n}$ with $F$ and $B$ having the homotopy type of $CW$-complexes, let $X$ be a filtered space homotopy equivalent to $E$, with the filtration induced by the cellular structure of the base. Then there is
\begin{itemize}
\item[(i)] a chain isomorphism of chain complexes in the derived category $\bb{D}_{2m}(\bb{Z})$
$$\lambda \co G_*C(X)  \cong C(\widetilde{B}) \otimes_{\Z[\pi_1(B)]} (C(F), U).$$
\item[(ii)] a derived symmetric structure on $G_*C(X)$ obtained from the symmetric structure defined on $C(\widetilde{B}) \otimes_{\Z[\pi_1(B)]}  (C(F), U)$ and the chain isomorphism from part (i) of this Theorem.
\end{itemize}
\end{theorem}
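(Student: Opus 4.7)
The plan is to exploit the cellular structure of the base. Equip $B$ with its given CW structure so that $X$ is filtered by $X_k = p^{-1}(B_k)$, where $B_k$ is the $k$-skeleton. The first step is to identify each relative chain group $C(X_k, X_{k-1})$ with a twisted sum of copies of $C(F)$ indexed by the $k$-cells of $B$. A disk $D^k$ is contractible, so the pullback of the fibration along any characteristic map $(D^k, S^{k-1}) \to (B_k, B_{k-1})$ is fibre homotopy equivalent to the trivial fibration $D^k \times F \to D^k$. Choosing such a trivialisation for every $k$-cell and assembling produces an identification
\[
C(X_k, X_{k-1}) \;\cong\; \bigoplus_{\alpha \in I_k} S^k C(F),
\]
where $I_k$ is the set of $k$-cells of $B$. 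Passing to the universal cover of $B$, the orbit relation under $\pi_1(B)$ rewrites the right-hand side as $C(\widetilde{B})_k \otimes_{\bb{Z}[\pi_1(B)]} C(F)$, \emph{provided} the $\pi_1(B)$-action on the summands is the monodromy action. This last point is the crux: comparing two trivialisations of the bundle over a loop in $B$ produces exactly the self chain homotopy equivalence $U(\omega)$ of $C(F)$ recorded by (\ref{ring-morph}), so the identification is in fact well defined in the homotopy category $\bb{D}_{2m}(\bb{Z})$.

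The second step is to recognise the differentials. The associated complex has objects $G_kC(X) = S^{-k}C(X_k, X_{k-1})$ in $\bb{D}_{2m}(\bb{Z})$, with morphism $d_1\co G_kC(X) \to G_{k-1}C(X)$ induced by the connecting map of the triple $(X_k, X_{k-1}, X_{k-2})$. Under the previous identification, $d_1$ decomposes as the cellular boundary of $B$ tensored with the fibre-transport change-of-trivialisation maps between a $k$-cell and each $(k-1)$-cell in its boundary. By construction this is precisely the differential of the transfer tensor product $C(\widetilde{B}) \otimes_{\bb{Z}[\pi_1(B)]} (C(F), U)$ as defined in Section \ref{transfer-sec}, so the cell-wise identifications assemble into the required chain isomorphism $\lambda$ in $\bb{D}_{2m}(\bb{Z})$. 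Naturality of the chain approximation to the diagonal (as in the commutative square just before this theorem) ensures that any ambiguity in the choice of trivialisations is absorbed into chain homotopies, so $\lambda$ is well defined up to chain homotopy.

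For part (ii), the symmetric representation $(C(F), \alpha, U)$ together with the symmetric structure $\phi$ on $C(\widetilde{B})$ produces, by Definition \ref{symm-str-induced-by-U}, a $2n$-dimensional derived symmetric structure on $C(\widetilde{B}) \otimes (C(F), \alpha, U)$ as a chain complex in $\bb{D}_{2m}(\bb{Z})$. Transporting this structure through the isomorphism $\lambda$ from part (i) yields a derived symmetric structure on $G_*C(X)$. The verification that $\lambda$ is compatible with the $\bb{Z}_2$-action coming from the duality $T$ reduces to checking that the monodromy $U$ is a map of rings with involution — but this is part of the hypothesis that $F$ is a Poincar\'e space, and was already established in the discussion preceding (\ref{ring-morph}).

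The main obstacle is step one: making the cell-by-cell trivialisations globally coherent enough to assemble into an isomorphism in $\bb{D}_{2m}(\bb{Z})$, and verifying that the filtered differentials truly correspond to tensor-product differentials twisted by $U$. This is a careful but routine application of the Hurewicz property and the uniqueness up to chain homotopy of cellular approximations to fibre transport; the details were carried out by L\"uck and Ranicki in \cite{SurTransfer} and by Korzeniewski in \cite[Chapters 3, 4]{Korzen}, on which the present statement relies.
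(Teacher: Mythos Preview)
Your proposal is correct and follows essentially the same route as the paper: identify each $G_kC(X)$ with $\bigoplus_{I_k} C(F)$ via trivialisations over the contractible $k$-cells, rewrite this as $C_k(\widetilde{B})\otimes_{\bb{Z}[\pi_1(B)]}(C(F),U)$, check that the differentials match under this identification, and then transport the derived symmetric structure through $\lambda$. The paper packages the differential check as a two-out-of-three commutativity argument for a diagram of squares (left and outer commute, hence right commutes), whereas you spell out the geometric content (contractibility of disks, monodromy as change of trivialisation); these are the same argument. One small misattribution: the naturality of the diagonal chain approximation is what makes part (ii) work, not what resolves the choice-of-trivialisation ambiguity in part (i) --- that ambiguity is absorbed simply because you are working in $\bb{D}_{2m}(\bb{Z})$, where chain-homotopic maps are identified.
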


\begin{proof}

\begin{itemize}
\item[(i)]
Denoting the $k$-cells in $B$ by an ordered set $I_k$ and the $k$-th skeleton of $B$ by $B_k$, we first observe that there is a commutative diagram, where the horizontal arrows are isomorphisms in $\mathbb{D}_{2m}(\zz)$
$$
\xymatrix{ \bigoplus_{j \in I_{k}} C(F) \ar[r] \ar[d] & G_kC(X) \ar[d]^{G_*d} \\
\bigoplus_{j \in I_{k-1}}C(F) \ar[r] & G_{k-1}C(X).
}$$
We can now construct the  following diagram
$$
\xymatrix{ \bigoplus_{j \in I_k} C(F)  \ar@/^2pc/[rr]^{Id} \ar[r] \ar[d] &  G_kC(X) \ar[d]^{G_*d} \ar[r]^{\hspace{-20pt} \lambda_k} & C_k(\widetilde{B}) \otimes (C(F), U) \ar[d]^{d_{C(\widetilde{B}) \otimes (C(F), U)}} \\
\bigoplus_{j \in I_{k-1}}C(F) \ar@/^-2pc/[rr]_{Id} \ar[r] & G_{k-1}C(X) \ar[r]^{\hspace{-20pt}\lambda_{k-1}} & C(\widetilde{B})_{k-1} \otimes (C(F), U). \\
}
$$
\raggedbottom

Here the square on the left
$$\xymatrix{\bigoplus_{j \in I_k} C(F) \ar[r] \ar[d] & G_kC(X) \ar[d]^{G_*d} \\
\bigoplus_{j \in I_{k-1}} C(F)  \ar[r] & G_{k-1}C(X)
} $$
commutes. The outer square also commutes
$$\xymatrix{\bigoplus_{j \in I_k} C(F) \ar[r] \ar[d] & C_k(\widetilde{B}) \otimes (C(F), U) \ar[d]^{d_{C(\widetilde{B})\otimes (C(F), U)}} \\
\bigoplus_{j \in I_{k-1}} C(F)  \ar[r] &C(\widetilde{B})_{k-1} \otimes (C(F), U).
} $$

Since the left and the outer squares commute, we deduce that the right square
$$
\xymatrix{  G_kC(X) \ar[d]_{G_*d} \ar[r]^-{\lambda_k} & C_k(\widetilde{B}) \otimes (C(F), U) \ar[d]^{d_{C(\widetilde{B}) \otimes (C(F), U)}} \\
G_{k-1}C(X) \ar[r]^-{\lambda_{k-1}} & C(\widetilde{B})_{k-1} \otimes (C(F), U) \\
}
$$
\raggedbottom
also commutes. Hence $\lambda \co G_{k}C(X) \to C(\widetilde{B})_{k} \otimes (C(F), U)$ is a chain map.

Using the same argument we can construct a chain map
$$\epsilon \co C(\widetilde{B}) \otimes (C(F), U) \to G_kC(X)$$
which is inverse to $\lambda$ if we view both $\epsilon$ and $\lambda$ as chain maps between chain complexes in $\mathbb{D}_{2m}(\zz)$.

\item[(ii)]
We have already proved that there is a chain equivalence $\lambda \co G_*C(X) \to C(\widetilde{B}) \otimes (C(F), U)$ and we also already described the chain approximation $ G_*C (\Delta^X) \co G_*C(X) \to G_*(C(X) \otimes_{\bb{Z}} C(X))$. The commutativity of the following diagram was proved in \cite[Theorem 3.11 and 4.5]{Korzen}, 
{\small
$$\xymatrix{
G_*C(X) \ar[r]^{\lambda} \ar[d]_{G_*C(\Delta^X)} & C(\widetilde{B}) \otimes (C(F), U) \ar[d]^{\Delta^{\widetilde{B}} \otimes \Delta^{F}}\\
G_*(C(X) \otimes_{\bb{Z}} C(X)) \ar[d]_{\theta^{X, X}}& (C(\widetilde{B}) \otimes_{\bb{Z}[\pi_1(B)]} C(\widetilde{B})) \otimes (C(F) \otimes_{\bb{Z}} C(F), U \otimes U) \ar[d]^{\theta^{\widetilde{B}, F}} \\
\textnormal{Hom}_{\bb{D}_{2m} (\zz)}\left((G_*C(X))^*,  G_*C(X) \right) \ar[r]^{\hspace{-30pt}\lambda \otimes \lambda} & (C(\widetilde{B}) \otimes (C(F), U)) \otimes_{\bb{Z}}(C(\widetilde{B}) \otimes (C(F), U)).
}
$$
}

The filtration-preserving symmetric structure on $G_*C(X)$, which we will denote later on in Section \ref{Filtered-signature} as $G_*\phi$, is obtained from the symmetric structure  $\phi^{C(\widetilde{B}) \otimes (C(F), U, \alpha)}$ on $C(\widetilde{B}) \otimes (C(F), U)$ described in Definition \ref{symm-str-induced-by-U} and the commutativity of the diagram above as 
$$\lambda \circ G_*\phi \circ \lambda^* = \phi^{C(\widetilde{B}) \otimes (C(F), U, \alpha)}.$$ The details for this can be found on \cite[page 42]{Korzen}.

\proved
\end{itemize}
\end{proof}

\subsection{The signature in the derived category} \label{Filtered-signature}

In Theorem \ref{chain-iso} we have seen that there is an equivalence  
of the associated graded complex of the filtered chain complex of the total space and the 
the tensor product $C(\widetilde{B}) \otimes (C(F), \alpha, U)$. The filtration preserving symmetric structure on $G_*C(X)$ was also described there in terms of the symmetric structure on the tensor product given in Definition \ref{symm-str-induced-by-U}.

In \cite[Lemma 4.10, Lemma 4.12]{Korzen} Korzeniewski describes a signature map for $2n$-dimensional symmetric chain equivalences $(G_*C, G_*\phi)$ in $\bb{D}_{2m}(\zz)$. Here $2m+2n \equiv 0 \pmod{4}.$ This signature map extends to a well-defined map
$$\sigma_{\bb{D}_{2m}(\zz)} \co L^{2n}(\bb{D}_{2m}(\zz)) \to \zz.$$
In this section we review the proofs by Korzeniewski relevant for this result, which we summarize in Proposition \ref{L-derived}.

\begin{proposition} \label{L-derived} {Korzeniewski \cite[Lemma 4.10, Lemma 4.12]{Korzen}} \qua
\begin{itemize}
\item[(i)] There is a well-defined homomorphism $\sigma_{\bb{D}_{2m}(\bb{R})}$ from $L^{2n}(\bb{D}_{2m}(\bb{R}))$ to $\zz.$
\item[(ii)]For any $2n$-filtered chain complex $C$ over the reals with a filtration-preserving nondegenerate symmetric structure $\phi$ of degree $2m+2n$, the signature of $(C, \phi)$ is equal to the derived signature of the associated graded chain complex $(G_*C, G_*\phi),$
$$\sigma(C, \phi) = \sigma_{\bb{D}_{2m}(\zz)}(G_*C, G_*\phi).$$
\end{itemize}
\end{proposition}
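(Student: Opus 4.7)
\medskip

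\noindent\textbf{Proof plan for Proposition \ref{L-derived}.}

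The plan is to define $\sigma_{\bb{D}_{2m}(\bb{R})}$ by \emph{totalization}, and then to reduce both parts to classical facts about the signature of symmetric Poincar\'e complexes of real vector spaces. Given a $2n$-dimensional symmetric Poincar\'e complex $(D,\psi)$ in $\bb{D}_{2m}(\bb{R})$, each $D_r$ is itself a chain complex in $\bb{B}(\bb{R})$ with chain modules $D_{r,\ast}$, and the morphisms $d\co D_r\to D_{r-1}$ are chain-homotopy classes of chain maps raising degree by one in the filtration direction. The usual sign convention assembles these into a single $(2m+2n)$-dimensional chain complex $\mathrm{Tot}(D)_k=\bigoplus_{r+s=k} D_{r,s}$. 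The $2m$-duality involution on $\bb{D}_{2m}(\bb{R})$ combined with the $2n$-dimensional symmetric structure $\psi$ reassembles, term by term, into an ordinary symmetric structure $\mathrm{Tot}(\psi)$ of degree $2m+2n$ on $\mathrm{Tot}(D)$. I would then define
\[
\sigma_{\bb{D}_{2m}(\bb{R})}(D,\psi) \;:=\; \sigma\bigl(\mathrm{Tot}(D),\mathrm{Tot}(\psi)\bigr)\in\bb{Z},
\]
i.e.\ the classical signature of the resulting $(2m+2n)$-dimensional symmetric Poincar\'e complex over $\bb{R}$.

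For part (i), the task is to show this descends to a well-defined homomorphism on $L^{2n}(\bb{D}_{2m}(\bb{R}))$. First I would check naturality: a chain equivalence $f\co (D,\psi)\to (D',\psi')$ in $\bb{D}_{2m}(\bb{R})$ consists of chain-homotopy classes of chain maps in the fibre direction, and one verifies that $\mathrm{Tot}(f)$ is a chain equivalence of the totalized real complexes, carrying $\mathrm{Tot}(\psi)$ to $\mathrm{Tot}(\psi')$ up to symmetric homotopy. Second, I would check cobordism invariance: a $(2n+1)$-dimensional symmetric Poincar\'e pair in $\bb{D}_{2m}(\bb{R})$ totalizes to a $(2m+2n+1)$-dimensional symmetric Poincar\'e pair over $\bb{R}$, whose boundary has vanishing signature by the standard cobordism invariance over a field. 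Additivity under direct sum is automatic from the definition of $\mathrm{Tot}$, giving the homomorphism property.

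For part (ii), the key observation is that, as a graded $\bb{R}$-vector space, $\mathrm{Tot}(G_\ast C)$ is canonically isomorphic to $C$: indeed $\mathrm{Tot}(G_\ast C)_k=\bigoplus_{r+s=k}C_{r+s,r}/C_{r+s-1,r-1}$ recovers the associated graded of the filtration of $C$, and over $\bb{R}$ the filtration $F_\ast C$ splits non-canonically. Under such a splitting, the differential of $C$ decomposes as $d_0+d_1+d_2+\cdots$ (with $d_j\co C_{r,s}\to C_{r-1,s-j}$), while the differential of $\mathrm{Tot}(G_\ast C)$ retains only the $d_0,d_1$ pieces; the higher $d_j$ provide a filtered chain homotopy between $C$ and $\mathrm{Tot}(G_\ast C)$. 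The same type of splitting argument applies to the filtration-preserving symmetric structure $\phi$ and its associated $G_\ast\phi$ (constructed exactly as in Theorem \ref{chain-iso}(ii)), so that $(\mathrm{Tot}(G_\ast C),\mathrm{Tot}(G_\ast\phi))$ and $(C,\phi)$ are chain homotopy equivalent as $(2m+2n)$-dimensional symmetric Poincar\'e complexes over $\bb{R}$. Since the classical signature is a chain-homotopy invariant, $\sigma(C,\phi)=\sigma(\mathrm{Tot}(G_\ast C),\mathrm{Tot}(G_\ast\phi))=\sigma_{\bb{D}_{2m}(\bb{R})}(G_\ast C,G_\ast\phi)$.

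The main obstacle in this program is the bookkeeping for part (ii): one must verify with the correct Koszul signs that the filtered symmetric structure $\phi$ and the totalization of its associated derived structure $G_\ast\phi$ really agree modulo terms that drop the filtration, and that these drop-filtration terms give a \emph{symmetric} chain homotopy (not merely a chain homotopy of the underlying differentials). A cleaner alternative is a spectral-sequence argument: over $\bb{R}$ the filtration spectral sequence of $(C,\phi)$ collapses up to symmetric equivalence because each page admits a Hodge-type splitting, and the signature is additive over the middle-degree pieces of the $E^\infty$-page, which coincide with the derived signature of $(G_\ast C,G_\ast\phi)$. Either route reduces (ii) to the statement that signature over $\bb{R}$ is invariant under chain equivalence, which is classical.
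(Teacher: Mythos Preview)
Your totalization approach is natural but has a genuine gap in part (i): a chain complex $(D,\psi)$ in $\bb{D}_{2m}(\bb{R})$ has differentials $d\co D_r\to D_{r-1}$ that are only chain-\emph{homotopy classes} of chain maps, and $d^2=0$ holds only in $\bb{D}_{2m}(\bb{R})$, i.e.\ up to homotopy. If you pick representatives and form $\mathrm{Tot}(D)$ with differential $d_0+d_1$, this need not square to zero, so $\mathrm{Tot}(D)$ is not a chain complex. The same issue bites in your part (ii): for a genuine filtered complex you have $d_0^2=0$ and $d_0d_1+d_1d_0=0$, but $d_1^2=-(d_0d_2+d_2d_0)\neq 0$ in general, so the ``$\mathrm{Tot}(G_\ast C)$ with $d_0+d_1$'' you want to compare to $C$ is not a complex, and the higher $d_j$ are not a chain homotopy between two complexes but rather the extra data making $C$ a complex in the first place.

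The paper (following Korzeniewski) sidesteps this by exploiting that $\bb{R}$ is a field: first replace each $C_r$ by its homology $H_\ast C_r$ (a complex with zero internal differential). Between such minimal objects, homotopy classes of maps \emph{are} maps and $d^2=0$ holds strictly, so one can now take homology in the base direction to obtain $C'_r=H_rC$ with zero differentials; the middle-dimensional piece $\phi'\co (C'_n)^*\to C'_n$ is then an honest symmetric form over $\bb{R}$ whose signature defines $\sigma_{\bb{D}_{2m}(\bb{R})}(C,\phi)$. Part (ii) is likewise proved by this reduction rather than by totalization. If you patch your argument by first passing to these minimal models, your totalization becomes well-defined and essentially recovers the paper's construction; as written, however, the definition of $\mathrm{Tot}$ on $\bb{D}_{2m}(\bb{R})$ is the missing step.
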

\begin{proof}
\begin{itemize}
\item[(i)]

An element in $L^{2n}(\mathbb{D}_{2m}(\mathbb{R}))$ is represented by $(C, \phi)$ where $C= (C_0 \leftarrow C_1 \leftarrow \dots \leftarrow C_{2n})$ is a chain complex in $\mathbb{D}_{2m}(\mathbb{R})$, so that each $C_r$ is an object of $\mathbb{D}_{2m}(\mathbb{R})$, and $\phi$ is a non-degenerate symmetric structure.
Without loss of generality, each $C_r$ has the form of a chain complex of real vector spaces with zero differential, that is $C_r \cong H_*C_r$.
Then from a symmetric chain complex $(C, \phi) \in L^{2n}(\mathbb{D}_{2m}(\mathbb{R}))$ we can form a new symmetric chain complex $(C', \phi')$ with each $C'_r$ defined as
$$C'_r = H_rC = \frac{\textnormal{Ker}~(d \co C_r \to C_{r-1})}{\textnormal{Im}~(d \co C_{r+1} \to C_r)} $$

This has trivial differentials,
$$\dots \xrightarrow{0} C'_r \xrightarrow{0} C'_{r-1} \xrightarrow{0} \dots  $$

The map in the middle dimension 
$$\phi' \co  (C'_n)^* \to C'_n $$
is a symmetric form over $\mathbb{R}.$

The chain complex $C$ in $\mathbb{D}_{2m}(\mathbb{R})$ is chain equivalent to $C'$ in $\mathbb{D}_{2m}(\mathbb{R})$, so $(C, \phi)$ is equivalent to $(C', \phi')$.
The signature $\sigma_{\mathbb{D}_{2m}(\mathbb{R})}(C, \phi)$ is defined to be the signature of the symmetric form $\phi' \co (C'_n)^* \to C'_n.$

The map $\sigma_{\mathbb{D}_{2m}(\mathbb{R})}$ is clearly additive, that is, if $(C,\phi^C)$ and $(B, \phi^B)$ are two chain complexes in $L^{2n}(\mathbb{D}_{2m}(\mathbb{R}))$, then
$$\sigma_{\bb{D}_{2m}(\bb{R})}\left(C \oplus B,\phi^C \oplus \phi^B \right) = \sigma_{\bb{D}_{2m}(\bb{R})}(C, \phi^C) + \sigma_{\bb{D}_{2m}(\bb{R})}(B, \phi^B).$$
 The proof that $\sigma_{\bb{D}_{2m}(\bb{R})}(C, \phi) = 0$ for elements representing zero in $L^{2n}(\mathbb{D}_{2m}(\mathbb{R}))$ is given on \cite[page 52]{Korzen}.

\item[(ii)]
This result is proved by Korzeniewski in \cite[page 54]{Korzen}.  \proved
\end{itemize}
\end{proof}

Using part (i) in  Proposition \ref{L-derived}, we see that  for a fibration $F^{2m} \to E \xrightarrow{p} B^{2n}$ the composite
$$p^!\co L^{2n}(\bb{Z}[\pi_1(B)]) \xrightarrow{-\otimes (C(F), \alpha, U)} L^{2n}(\bb{D}_{2m}(\bb{Z}))  \xrightarrow{\sigma_{\bb{D}_{2m} (\bb{Z})}} L^{2m+2n}(\zz)= \bb{Z}$$
is a transfer map in symmetric $L$-theory.

If $B$ is a $2n$-dimensional geometric Poincar\'e complex then $E$ is a $(2m+2n)$-dimensional geometric Poincar\'e complex with the transfer of the
symmetric signature $\sigma^*(B) \in L^{2n}(Z[\pi_1(B)])$ the ordinary signature
$\sigma(E) = p^!(\sigma^*(B)) \in L^{2m+2n}(\zz) = \zz.$

\begin{example} With this example we shall illustrate Proposition  \ref{L-derived} in two special cases, when the base is a point and when the fibre is a point.
\begin{itemize}
\item Case 1: Let $F^{2m} \to E \to \{pt\} $ be a fibration with base a point, that is, we take $n=0$. Then $F \to E$ is a homotopy equivalence, and there is a symmetric $L$-theory transfer
$$p^! \co L^0(\zz) \to L^0(\bb{D}_{2m}(\zz)) \to L^{2m}(\zz)=\zz.$$

In general the ring morphism $\zz[\pi_1(B)] \to H_0( \textnormal{Hom}_{\zz}(C(F), C(F))^{op})$ induces a map in $L$-theory $L^0(\zz[\pi_1(B)]) \to L^0 (H_0( \textnormal{Hom}_{\zz}(C(F), C(F))^{op}))$. Composing this with the canonical map  $L^0(H_0(\textnormal{Hom}_{\zz}(C( F ),C( F ))^{op})) \to L^0(\bb{D}_{2m}(\zz))$ we obtain the first functor in the transfer map.
If $B= \{ pt \}$, the ring morphism is $$\zz \to H_0( \textnormal{Hom}_{\zz}(C( F ),C( F ))^{op}); 1 \mapsto 1,$$ and the canonical map sends $1$ to $C(F)$ in the $0$-th filtration.
Therefore the transfer map is
\begin{gather*}
p^!\co L^0(\zz)=\zz \to L^{2m}(\zz) ; 1 \mapsto \sigma(E)=\sigma(F) \\
\tag*{\text{and}} p^!(\sigma^*(B)) = \sigma(F)= \sigma(E).
\end{gather*}

\item Case 2:  Let $\{pt\} \to E \to B^{2n}$ be a fibration with fibre a point, that is we take $m=0$. Then $p \co E \to B$ is a homotopy equivalence and
$$p^! \co L^{2n}(\zz[\pi_1(B)]) \to L^{2n}(\zz)=\zz$$
is the forgetful map induced by the augmentation $\zz[\pi_1(B)]\to \zz,$ and
$$p^!(\sigma^*( B)) = \sigma(B)= \sigma( E).$$
\end{itemize}
\end{example}

At this point we know that for a fibration $F^{2m} \to E \to B^{2n}$ with $2m+2n \equiv 0 \pmod{4}$
 $$\sigma (E) = \sigma(X) =  \sigma_{\bb{D}_{2m}({\zz})}(G_*C(X)) \in \zz,$$
where $X$ is a filtered complex homotopy equivalent to $E$. And we have also described a well defined functor $$L^{2n}(\bb{Z}[\pi_1(B)])\xrightarrow{-\otimes (C(F), \alpha, U)} L^{2n}(\bb{D}_{2m} (\bb{Z}))  \xrightarrow{\sigma_{\bb{D}_{2m} {(\bb{Z})}}} \bb{Z}.$$
To show that this functor describes the signature of the total space just need to observe that $\sigma_{\bb{D}_{2m}({\zz})}(G_*C(X)) = \sigma_{\bb{D}_{2m}({\zz})}(C(\widetilde{B}) \otimes (C(F), \alpha, U)). $ 
%
%
%
%
Using the results of Theorem \ref{E and X},  Proposition \ref{L-derived} and    Theorem \ref{chain-iso} we see that
$$\sigma (E) = \sigma(X) =  \sigma_{\bb{D}_{2m}({\zz})}(G_*C(X)) = \sigma_{\bb{D}_{2m}({\zz})}((C(\widetilde{B}), \phi) \otimes (C(F), \alpha, U)) \in \bb{Z}.$$

\subsection{Two equivalent functors for the signature of a fibration}\label{functors-sec}

The definition of a symmetric representation was given in Definition \ref{sym-rep}.

Let $(C(F), \alpha, U)$ be the symmetric representation of the group ring $\bb{Z}[\pi_1(B)]$ in $\bb{D}_{2m}(\bb{Z})$ associated to the fibration $F^{2m} \to E \to B^{2n}$.
\begin{definition}\label{functor-chain}
The symmetric representation $(C(F), \alpha, U)$ gives rise to the following functor of additive categories with involution,
$$- \otimes (C(F), \alpha, U) \co  \bb{B}(\bb{Z}[\pi_1(B)]) \to \bb{D}_{2m}(\bb{Z}).$$
\end{definition}
 From the chain symmetric representation  $(C(F), \alpha, U)$ we can construct  a homology symmetric representation $(A, \alpha, U)$ associated to the same fibration which is given by
$$
\begin{array}{l}
A = H_m(C(F))/ torsion, \\
\alpha \co  A=   H_m(C(F))/ torsion \to A^*= H^m(C(F))/ torsion, \\
U  \co \bb{Z}[\pi_1(B)] \to H_0(\textnormal{Hom}_{\bb{Z}}(A, A))^{op}.
\end{array}
$$

\begin{rem} \label{skew-susp} Note that $(A, \alpha) \in L^0(\bb{Z}, (-1)^{m})$, so the tensor product $(C(\widetilde{B}), \phi) \otimes_{\zz} (A, \alpha, U)$ does not immediately give us  a chain complex of the dimension of the total space. To obtain the correct dimension we just need to use skew suspension of $(A, \alpha),$
$$ \bar{S}^{m} \co L^0(\zz, (-1)^m) \to L^{2m}(\zz, (-1)^{2m}) ; ~(A, \alpha) \mapsto  \bar{S}^{m}(A, \alpha) $$

\end{rem}



\begin{definition} \label{representation A}
The homology symmetric representation $ \bar{S}^{m}(A, \alpha, U)$ associated to the fibration $F^{2m} \to E \to B^{2n}$ is the $(-1)^m$-symmetric form $(A, \alpha)$ together with a representation $U$ of the group ring $\bb{Z}[\pi_1(B)]$ in the additive category with involution $\bbD_{2m}(\zz)$.
\end{definition}

\begin{definition}\label{functor-homology}
The homology symmetric representation $ \bar{S}^{m}(A, \alpha, U)$ gives rise to the following functor of additive categories with involution
$$- \otimes  \bar{S}^{m} (A, \alpha, U) \co \bb{B}(\bb{Z}[\pi_1(B)]) \to \bbD_{2m}(\zz).$$

\end{definition}

The two functors from Definitions \ref{functor-chain} and \ref{functor-homology} induce maps in symmetric $L$-theory,
\begin{gather*} - \otimes (C(F), \alpha, U) \co L^{2n}(\bb{Z}[\pi_1(B)]) \to L^{2n}(\bb{D}_{2m}(\bb{Z})) \\
\tag*{\text{and}} - \otimes \bar{S}^{m}(A, \alpha, U) \co L^{2n}(\bb{Z}[\pi_1(B)]) \to L^{2n}(\bb{D}_{2m}(\bb{Z})).
\end{gather*}

\begin{proposition} \label{two-functors}
Using the functors for a fibration $F^{2m} \to E \to B^{2n}$ described in Definitions \ref{functor-chain} and \ref{functor-homology}, the following diagram commutes
\begin{displaymath}
\xymatrix{
 L^{2n}(\bb{Z}[\pi_1(B)]) \ar[d]_{=}\ar[rr]^{- \otimes (C(F), \alpha, U)} & &  L^{2n}(\bb{D}_{2m}(\bb{Z}))\ar@/^2.0pc/[rrr]^{\sigma_{\bb{D}_{2m}(\bb{Z})}} \ar[r]^{- \otimes \bb{R}} & L^{2n}(\bb{D}_{2m}(\bb{R})) \ar[rr]^{\sigma_{\bb{D}_{2m}(\bb{R})}} &&  \zz \ar[d] \\
 L^{2n}(\bb{Z}[\pi_1(B)]) \ar[rr]^{- \otimes  \bar{S}^{m}(A, \alpha, U)} & &  L^{2n}(\bb{D}_{2m}(\bb{Z}))\ar@/^-2.0pc/[rrr]^{\sigma_{\bb{D}_{2m}(\bb{Z})}} \ar[r]^{- \otimes \bb{R}} & L^{2n}(\bb{D}_{2m}(\bb{R})) \ar[rr]^{\sigma_{\bb{D}_{2m}(\bb{R})}} &&  \zz.
}
\end{displaymath}

\end{proposition}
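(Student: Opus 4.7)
The plan is to show that the two transfers agree after composition with the $\bb{R}$-signature, by exhibiting a splitting of $(C(F)\otimes\bb{R},\alpha,U)$ as $\bar{S}^{m}(A\otimes\bb{R},\alpha,U)$ plus off-middle summands that carry $U$-invariant Lagrangians and therefore vanish under $\sigma_{\bb{D}_{2m}(\bb{R})}$.

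First I would pass to real coefficients at the outset, since both horizontal composites to $\bb{Z}$ in the diagram factor through the change-of-rings functor $L^{2n}(\bb{D}_{2m}(\bb{Z})) \xrightarrow{-\otimes\bb{R}} L^{2n}(\bb{D}_{2m}(\bb{R}))$. Over $\bb{R}$, the chain complex $C(F)\otimes\bb{R}$ is chain equivalent in $\bb{D}_{2m}(\bb{R})$ to its homology $H_{*}(F;\bb{R})$ with zero differentials, and both the symmetric structure $\alpha$ and the monodromy representation $U$ descend to this homology. Since monodromy preserves homological degree, each $H_{k}(F;\bb{R})$ is a $U$-stable subspace, and $\alpha$ restricts to the non-degenerate Poincar\'e pairings $H_{k}(F;\bb{R})\otimes H_{2m-k}(F;\bb{R})\to\bb{R}$, with $H_{k}\otimes H_{l}$ paired trivially for $k+l\neq 2m$.

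Next I would decompose the homology symmetric representation as an orthogonal sum of symmetric representations:
\[
\bigl(H_{*}(F;\bb{R}),\alpha,U\bigr) \;\cong\; \bar{S}^{m}(A\otimes\bb{R},\alpha,U)\;\oplus\;\bigoplus_{k<m}\bigl(H_{k}\oplus H_{2m-k},\alpha,U\bigr).
\]
In each off-middle summand $(H_{k}\oplus H_{2m-k},\alpha,U)$ the subspace $H_{k}$ is a $U$-invariant Lagrangian. Tensoring this summand with $(C(\widetilde{B}),\phi)$ over $\bb{Z}[\pi_{1}(B)]$ produces a $2n$-dimensional symmetric chain complex in $\bb{D}_{2m}(\bb{R})$ that admits $C(\widetilde{B})\otimes H_{k}$ as a Lagrangian subcomplex, and so represents the zero class in $L^{2n}(\bb{D}_{2m}(\bb{R}))$; this contribution vanishes under $\sigma_{\bb{D}_{2m}(\bb{R})}$ by the description in Proposition \ref{L-derived}(i). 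Only the middle summand survives, and it is precisely $-\otimes\bar{S}^{m}(A\otimes\bb{R},\alpha,U)$.

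The main obstacle will be making precise the claim that the $U$-invariant Lagrangian $H_{k}$ in an off-middle summand really produces a Lagrangian in the derived tensor product, and that such Lagrangian-carrying chain complexes in $\bb{D}_{2m}(\bb{R})$ are annihilated by $\sigma_{\bb{D}_{2m}(\bb{R})}$. This is the derived-category translation of the ordinary $L$-theoretic fact that a symmetric Poincar\'e complex over $\bb{R}$ has signature detected entirely by the form on the middle homology; the relevant bookkeeping is already carried out in the reduction argument of Proposition \ref{L-derived}(i), where each object $C_{r}$ is replaced by its homology and the off-middle pairings become hyperbolic.
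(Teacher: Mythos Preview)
Your proposal is correct and follows essentially the same route as the paper: pass to $\bb{R}$, replace $C(F)\otimes\bb{R}$ by its homology, split off the pairs $H_{k}\oplus H_{2m-k}$ for $k\neq m$, and observe that these off-middle summands are hyperbolic (the paper says this in one phrase, while you spell out that $H_{k}$ is a $U$-invariant Lagrangian), leaving only the middle-degree contribution $\bar{S}^{m}(A\otimes\bb{R},\alpha,U)$. Your write-up is in fact slightly more explicit than the paper's about why the off-middle terms vanish.
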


\begin{proof}
$$
\begin{array}{ccl}
\sigma_{\bb{D}_{2m}(\bb{Z})}(-\otimes (C, \alpha, U)) & = & \sigma_{\bb{D}_{2m}(\bb{R})}(-\otimes \bar{S}^{m} (H_*(C\otimes \bb{R}), \alpha \otimes \bb{R}, U\otimes \bb{R})) \\
 & = & \sigma_{\bb{D}_{2m}(\bb{R})}(-\otimes  \bar{S}^{m} (H_0(C\otimes \bb{R}) \oplus (H_{2m}(C\otimes \bb{R}), \alpha \otimes \bb{R}, U\otimes \bb{R})) + \\
& &  \sigma_{\bb{D}_{2m}(\bb{R})}(-\otimes  \bar{S}^{m}(H_1(C\otimes \bb{R}) \oplus (H_{2m-1}(C\otimes \bb{R}), \alpha \otimes \bb{R}, U\otimes \bb{R})) + \\
& &  \dots + \sigma_{\bb{D}_{2m}(\bb{R})}(-\otimes S^{2m}(H_m(C\otimes \bb{R}), \alpha \otimes \bb{R}, U\otimes \bb{R})).
\end{array}
$$
The signature only depends on the middle homology so the only non-zero term is $\sigma_{\bb{D}_{2m}(\bb{R})}(-\otimes  \bar{S}^{m}(H_m(C\otimes \bb{R}), \alpha \otimes \bb{R}, U\otimes \bb{R}))$ the other terms are just hyperbolic modules which are $0 \in L^{2m+2n}(\bb{Z})$. hence,
$$
\begin{array}{ccl}
\sigma_{\bb{D}_{2m}(\bb{Z})}\left(-\otimes (C, \alpha, U)\right) & = & \sigma_{\bb{D}_{2m}(\bb{R})} \left(-\otimes  \bar{S}^{m} (H_m(C\otimes \bb{R}), \alpha \otimes \bb{R}, U\otimes \bb{R})\right) \\
& = & \sigma_{\bb{D}_{2m}(\bb{R})} \left(- \otimes  \bar{S}^{m}(A \otimes \bb{R}, \alpha \otimes \bb{R}, U \otimes \bb{R})\right) \\
& = & \sigma_{\bb{D}_{2m}(\bb{Z})} \left(- \otimes  \bar{S}^{m}(A, \alpha , U)\right).
\end{array}
$$
and the result follows.\proved
\end{proof}

\begin{rem} \label{equal-signatures}
We had already noted that
$$\sigma(E) = \sigma_{\bb{D}_{2m}(\bb{Z})} (C(\widetilde{B}), \phi) \otimes (C(F), \alpha, U). $$
Combining this result with Proposition \ref{two-functors} we have that
$$\sigma(E) = \sigma((C(\widetilde{B}), \phi) \otimes  \bar{S}^{m} (A, \alpha, U)).$$
\end{rem}

\section{The signature and the Arf and Brown-Kervaire invariants} \label{Sign-Arf-BK}
\subsection{Pontryagin squares} \label{P-sq}

\subsubsection{Cup-$i$ products}\label{cup-i}
To define Pontryagin squares it is first necessary to introduce cup-$i$ products. The construction of the cup-$i$ products is defined in detail by Mosher and Tangora in \cite{Mosher-Tangora}.
\begin{definition} {\rm (Mosher and Tangora \cite{Mosher-Tangora})}\qua \label{cup-i products} For each integer $i \geq 0$, define a cup-$i$ product
\begin{gather*}\cup_i \co C^p(X) \otimes C^q(X) \longrightarrow C^{p+q-i}(X) \co (u, v) \longrightarrow u \cup_i v \\
\tag*{\text{by the formula}}
(u \cup_i v)(c)  = (u \otimes v) \phi(d_i \otimes c),
\end{gather*}
where $c \in C_{p+q-i}(X)$ and $d_i$ is the standard generator of $W_i = \mathbb{Z}[\mathbb{Z}_2]$ and
\begin{gather*}\phi \co W \otimes C(X) \to C(X) \otimes C(X)
\end{gather*} is an equivariant  Eilenberg-Zilber diagonal map.
\end{definition}

More details about the equivariant chain map $\phi$ and the chain complex $W$ are given in Mosher and Tangora \cite[Chapter 2]{Mosher-Tangora}.


\subsubsection{Classical Pontryagin squares} \label{class-pont}

Pontryagin first defined the cohomology operation known as Pontryagin square in \cite{Pontryagin}. Pontryagin squares were also carefully studied by Whitehead in \cite{Whitehead2} and \cite{Whitehead}.
With $X$ a space, the Pontryagin square is an unstable cohomology operation
$$\mc{P}_2 \co H^n(X; \bb{Z}_2) \to H^{2n}(X; \bb{Z}_4).$$
Although the Pontryagin square is defined on modulo $2$ cohomology classes, it cannot be constructed solely from the modulo $2$ cup product structure.

Let  $d^* \co C^n(X; \bb{Z}) \to C^{n+1}(X; \bb{Z})$  be the singular cohomology coboundary operator.
We shall represent an element $x \in H^n(X ; \zz_2)$ as a cycle
$$x=(y, z) \in \textnormal{Ker} \left(\left(\begin{array}{cc} d^* & 2 \\   0 & d^* \end{array}    \right) \co C^n(X) \oplus C^{n+1}(X) \to C^{n+1}(X) \oplus C^{n+2}(X)\right),$$
that is, $d^*(z)= 0$ and $d^*(y) + 2z =0.$
Using this notation we define the Pontryagin square as follows.

\begin{definition}
  The Pontryagin square is defined on the cochain level by
\begin{align*}
\mc{P}_2(x) = \cP_2(y, z) &= y \cup_0 y + y \cup_1 d^*y  \\
                                       & = y \cup_0 y + 2 y \cup_1 z,
\end{align*}
where $y \cup_1 z$ is the cup-$1$ product.
\end{definition}

The coboundary formula for the cup-$i$ product, with $u \in C^p(X)$ and $v \in C^q(X)$ is given by
$d(u \cup_i v) = (-1)^i du \cup_i v + (-1)^{i+p}u \cup_i dv + (-1)^{i+1} u \cup_{i-1} v + (-1)^{pq+1} v \cup_{i-1} u.$
This formula can be applied to check that $y \cup_0 y + y \cup_1 d^*y \textnormal{ mod } 4 $ is a cocycle mod $4$ and that its cohomology class only depends on that of $x = (y, z) \in H^n(X; \bb{Z}_2)$ (see Mosher and Tangora \cite{Mosher-Tangora}).

\begin{definition}\label{Pont-square-definition}
 The Pontryagin square is defined on cohomology by
\begin{align*}\mc{P}_2 \co H^n(X ; \bb{Z}_2) & \to H^{2n}(X; \bb{Z}_4) \\
                                                 x = (y, z) & \mapsto y \cup_0 y + y \cup_1 d^*y,
\end{align*}
where $(y, z)$ are as defined above.
\end{definition}

The maps in the exact sequence $0 \longrightarrow \mathbb{Z}_2 \overset{i}{\longrightarrow} \mathbb{Z}_4 \overset{r}{\longrightarrow} \mathbb{Z}_2 \longrightarrow  0 $ induce maps in cohomology,
\begin{gather*}\dots \longrightarrow H^{2n}(X; \mathbb{Z}_2)\overset{i_*}{ \longrightarrow} H^{2n}(X; \mathbb{Z}_4) \overset{r_*}{\longrightarrow} H^{2n}(X; \mathbb{Z}_2) \overset{\delta}{\longrightarrow} \dots \\
\tag*{\text{so}}
r_* \mathcal{P}_2(x) = x \cup x \in H^{2n} (X ; \mathbb{Z}_2),
\end{gather*}
 where $r_* \co H^{n}(X ; \mathbb{Z}_4) \longrightarrow H^{n}(X ; \mathbb{Z}_2)$ is the map induced by the non-trivial map \\ $\mathbb{Z}_4 \longrightarrow \mathbb{Z}_2$.

\begin{rem}
The Steenrod square $Sq^{n}$ is a mod $2$ reduction of the Pontryagin square:
\begin{displaymath}
\xymatrix{ &  H^{2n}(X; \mathbb{Z}_4) \ar[d]^{r_*}  \\
 H^{n}(X; \mathbb{Z}_2) \ar[r]^{Sq^{n}} \ar[ur]^{\mathcal{P}_2} &  H^{2n}(X; \mathbb{Z}_2).
}
\end{displaymath}
\end{rem}

\begin{proposition}{(Mosher and Tangora \cite{Mosher-Tangora})} \qua \label{on a sum}
\begin{itemize}
\item[(i)]Let $x$ and $x'$ be cocycles in $H^n(X; \bb{Z}_2)$, where $X$ is a  Poincar\'e space.  The Pontryagin square evaluated on a sum is given by
$$\mc{P}_2(x+x') = \mc{P}_2(x) + \mc{P}_2(x')+(x \cup x')+(-1)^n(x \cup x') \in H^{2n}(X; \bb{Z}_4).$$
So that in particular for $n$ even
$$\mc{P}_2(x+x') = \mc{P}_2(x) + \mc{P}_2(x')+i (x \cup x')\in \bb{Z}_4,$$
where $i$  is the non-trivial homomorphism $i \co \bb{Z}_2 \to \bb{Z}_4.$

\item[(ii)] The Pontryagin square is a quadratic function with respect to cup product, that is,
$$\mc{P}_2(x+ x') = \mc{P}_2(x) + \mc{P}_2(x') + i(x \cup x') \in H^{2n}(X; \bb{Z}_4),$$
for any $x, x' \in H^{n}(X ; \bb{Z}_2)$, where $i \co \bb{Z}_2 \to \bb{Z}_4$ is the non-trivial homomorphism.
\end{itemize}
\end{proposition}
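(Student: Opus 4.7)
The plan is to work entirely at the level of integer cochains, exploiting the representation $x = (y,z)$ with $d^*y + 2z = 0$, $d^*z = 0$, and then reduce modulo $4$ at the end. The identity is essentially a bilinearity calculation for $\mc{P}_2$ using the coboundary formula for the cup-$i$ products, i.e. the standard Hirsch-formula derivation, so I only need to keep track of signs and of which terms survive mod $4$.

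First, directly expand the cochain formula $\mc{P}_2((y,z)) = y \cup_0 y + y \cup_1 d^*y$ on the sum $(y+y', z+z')$. All ``pure'' terms reassemble into $\mc{P}_2(x) + \mc{P}_2(x')$, leaving the integer-cochain identity
$$\mc{P}_2(x+x') - \mc{P}_2(x) - \mc{P}_2(x') \;=\; y \cup_0 y' + y' \cup_0 y + y \cup_1 d^*y' + y' \cup_1 d^*y.$$
So the task is to show that this cross term represents $(1+(-1)^n)(x \cup x')$ in $H^{2n}(X;\bb{Z}_4)$, and in particular, for $n$ even, represents $i(x \cup x')$.

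Second, I would apply the coboundary formula for $\cup_1$ from Definition \ref{cup-i products} to the integer cochain $y \cup_1 y'$, with $p = q = n$ and $i = 1$. This yields
$$d(y \cup_1 y') = -\,d^*y \cup_1 y' + (-1)^{n+1}\, y \cup_1 d^*y' + y \cup_0 y' + (-1)^{n+1}\, y' \cup_0 y.$$
Solving for $y \cup_0 y'$ and substituting back into the cross term rearranges the expression, modulo the coboundary $d(y \cup_1 y')$, into
$$(1+(-1)^n)\,(y' \cup_0 y) \;+\; \bigl(\text{terms containing } d^*y \text{ or } d^*y'\bigr).$$
Now use $d^*y = -2z$ and $d^*y' = -2z'$ to observe that every leftover ``$d^*$'' term is visibly a multiple of $2$, of the shape $-2\,(z \cup_1 y')$, $-2\,(y' \cup_1 z)$, or $-2\,(y \cup_1 z')$.

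Third, reducing modulo $4$, these remaining multiples of $2$ are classified by their mod-$2$ class; I would show each such term is itself a coboundary mod $2$ by applying the coboundary formula a second time, now to $y' \cup_2 z$ (and analogously), which expresses $y' \cup_1 z + z \cup_1 y'$ as $d^*$ of a $\cup_2$-term plus a term containing $d^*z = 0$ or $d^*y' = -2z' \equiv 0 \pmod 2$. Thus every ``$d^*$'' contribution vanishes in $H^{2n}(X;\bb{Z}_4)$, leaving exactly
$$\mc{P}_2(x+x') - \mc{P}_2(x) - \mc{P}_2(x') \;=\; (1+(-1)^n)(x \cup x') \in H^{2n}(X;\bb{Z}_4),$$
which gives (i); specialising to $n$ even, where $1+(-1)^n = 2$ and multiplication by $2$ on the image of $r_*$ coincides with $i \co \bb{Z}_2 \to \bb{Z}_4$, gives (ii).

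The main obstacle I expect is bookkeeping: several sign conventions for $\cup_i$ are in use in the literature, and the mod-$4$ reduction is delicate because one must know that the ``even'' correction terms are not merely in $2C^{2n}(X;\bb{Z})$ but actually represent $0$ in $H^{2n}(X;\bb{Z}_4)$. This is what forces a second, auxiliary application of the cup-$i$ coboundary formula (this time at $i=2$), and keeping that step clean, together with the sign from graded commutativity $y' \cup_0 y = (-1)^{n^2}(y \cup_0 y') \pmod{\text{coboundary}}$, will be the delicate part.
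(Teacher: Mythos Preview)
The paper does not prove this proposition; it is stated with a citation to Mosher and Tangora and left without proof, so there is no in-paper argument to compare against. Your cochain-level calculation via repeated use of the $\cup_i$ coboundary formula is the standard derivation (essentially what one finds in that reference) and is correct as outlined.
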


\subsubsection{Algebraic Pontryagin squares}\label{alg-pont}
An algebraic analog of the Pontryagin square can be constructed as follows. The algebraic Pontryagin square lies in the $4k$-dimensional symmetric $Q$-group of a finitely generated free $\bb{Z}$-module chain complex concentrated in degrees $2k+1$ and $2k$
$$B(2k, 2) \co \dots \to 0  \to S^{2k+1}\bb{Z} \xrightarrow{d= 2} S^{2k}\bb{Z} \to 0 \to \dots.$$
In  \cite{BanRan} Banagl and Ranicki give formulas for the computations of  $Q$-groups  and in \cite{Mod8} Ranicki and Taylor show explicitly that $Q^{4k}(B(2k, 2))$ is given by the isomorphism $$Q^{4k}(B(2k, 2)) \to \bb{Z}_4 ; ~ \phi \to \phi_0 + d\phi_1.$$

\begin{definition}{(Ranicki and Taylor \cite{Mod8})} \qua \label{algebraic-Pont}
The \textbf{$\zz_2$-coefficient Pontryagin square} of a $4k$-dimensional symmetric complex $(C, \phi)$ over $\zz$ is the function
$$
\begin{array}{rcl}
\mc{P}_2(\phi) \co H^{2k}(C ; \zz_2) = H_0(\textnormal{Hom}_{\zz}(C, B(2k,2)))& \to &Q^{4k}(B(2k, 2)) = \zz_4\\
(u, v) & \mapsto & (u, v)^{\%}(\phi) = \phi_0(u, v) + 2\phi_1(v, u).
\end{array}
$$
\end{definition}

In definition \ref{algebraic-Pont},  $C$ is a $4k$-dimensional chain complex with a $4k$-dimensional symmetric structure, $B(2k, 2)$ is a finitely generated free $\bb{Z}$-module chain complex concentrated in degrees $2k+1$ and $2k$ and $g$ is a chain map $g = (u, v) \co C \longrightarrow B(2k, d)$.

\begin{rem}
If $C=C(X)$ is the chain complex of a space $X$ and  $\phi=\phi_X[X] \in Q^{4k}(C)$ is the image of a homology class $[X] \in H_{4k}(X)$ under the symmetric construction $\phi_X$,
 then the evaluation of the Pontryagin square $\mathcal{P}_2 \co H^{2k}(X;\zz_2) \to H^{4k}(X;\zz_4)$ on the mod $4$ reduction $[X]_4 \in H_{4k}(X;\zz_4)$ is the algebraic Pontryagin square
$$\mathcal{P}_2(\phi) \co H^{2k}(C; \zz_2)=H^{2k}(X; \zz_2) \to \zz_4 ; x \mapsto \langle \mathcal{P}_2(x), [X]_4 \rangle.$$
There is a commutative diagram
$$
\xymatrix{ H^{2k}(C(X); \zz_2) \ar[r]^{\mc{P}_2(\phi)} &  Q^{4k}(B(2k,2))= \zz_4 \\
H^{2k}(X ; \zz_2) \ar[u]^{=} \ar[r]^{\hspace{-55pt} \mc{P}_2} & H^{4k}(X; \zz_4)= H^{4k}(X ; Q^{4k}(B(2k,2))) \ar[u]_{\langle [X]_4, - \rangle}.
}
$$
We are mainly interested in the case when $X$ is a $4k$-dimensional geometric Poincar\'e space and $[X]$ is the fundamental class.
\end{rem}

\subsection{The signature modulo $4$ and modulo $8$} \label{Morita-theorem}

\subsubsection{The signature modulo $4$} \label{sign-mod4}
A result from Morita \cite{Morita} that will be relevant for us gives the relation between the signature modulo $4$ and the Pontryagin square. The formulation by Morita in \cite{Morita} is for an oriented Poincar\'e space, and we will use an algebraic analog.

\begin{proposition}{(Morita \cite[Proposition 2.3]{Morita})} \qua \label{mod4wu}
Let $X^{4k}$ be an oriented Poincar\'e space and $\mc{P}_2 \co H^{2k}(X ; \bb{Z}_2) \to H^{4k}(X ; \bb{Z}_4)$ be the Pontryagin square, then
$$\sigma(X) = \langle \mc{P}_2 (v_{2k}), [X] \rangle \in \zz_4, $$
where $v_{2k} \in H^{2k}(X ; \bb{Z}_2)$ is the $2k$-th Wu class of $X.$
\end{proposition}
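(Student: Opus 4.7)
The plan is to reduce Morita's statement to the classical van der Blij congruence for unimodular symmetric forms over $\bb{Z}$. By Poincar\'e duality the cup product pairing $\lambda(a,b) = \langle a \cup b, [X]\rangle$ endows $V := H^{2k}(X;\bb{Z})/\mathrm{torsion}$ with a nondegenerate unimodular symmetric bilinear form over $\bb{Z}$ whose signature is $\sigma(X)$. The Wu formula characterises $v_{2k}$ by $\langle v_{2k}\cup x,[X]\rangle = \langle Sq^{2k}x,[X]\rangle$ on $H^{2k}(X;\bb{Z}_2)$, and the unstable axiom gives $Sq^{2k}(x) = x \cup x$ in the middle dimension. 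Restricted to mod $2$ reductions of integral classes, this says the image $\bar v_{2k} \in V/2V$ is a characteristic element of $(V/2V, \lambda \bmod 2)$: $\lambda(\bar v_{2k}, y) \equiv \lambda(y,y) \pmod 2$ for all $y \in V$.

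Next I would choose any integral lift $c \in V$ of $\bar v_{2k}$. Such a lift is a characteristic element of $(V,\lambda)$, and the value $\lambda(c,c) \bmod 4$ is independent of the lift: replacing $c$ by $c + 2b$ changes $\lambda(c,c)$ by $4\lambda(c,b) + 4\lambda(b,b)$. I would then invoke the van der Blij--Milnor congruence: for any unimodular symmetric form $(V,\lambda)$ over $\bb{Z}$ and any characteristic element $c$, one has $\sigma(V,\lambda) \equiv \lambda(c,c) \pmod 8$. This follows by diagonalising $\lambda$ over the $2$-adic integers and verifying the formula on the one-dimensional forms $\langle\pm 1\rangle$, whose characteristic elements are odd. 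In particular $\sigma(X) \equiv \lambda(c,c) \pmod 4$.

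To finish I must identify $\lambda(c,c) \bmod 4$ with $\langle \mathcal{P}_2(v_{2k}),[X]\rangle$. The cochain formula of Definition~\ref{Pont-square-definition} specialises: on any class which lifts integrally, with $\bar c \in H^{2k}(X;\bb{Z}_2)$ having integer lift $c$, the term $y \cup_1 d^*y$ vanishes on the nose, so $\mathcal{P}_2(\bar c) = c \cup c \bmod 4$ in $H^{4k}(X;\bb{Z}_4)$, and evaluation on $[X]$ recovers $\lambda(c,c) \bmod 4$. The Wu class itself may not lift integrally, so I would write $v_{2k} = \bar c + \varepsilon$, where $\varepsilon$ lies in the kernel of the evaluation pairing $H^{2k}(X;\bb{Z}_2) \to \Hom(V,\bb{Z}_2)$, and use the quadraticity formula from Proposition~\ref{on a sum},
$$\mathcal{P}_2(v_{2k}) = \mathcal{P}_2(\bar c) + \mathcal{P}_2(\varepsilon) + i(\bar c \cup \varepsilon),$$
to show $\langle \mathcal{P}_2(v_{2k}) - \mathcal{P}_2(\bar c),[X]\rangle \equiv 0 \pmod 4$; the cross term dies because $\varepsilon$ pairs trivially with all integer classes on $[X]$, and $\mathcal{P}_2(\varepsilon)$ is handled separately, noting that $\varepsilon$ comes from torsion via the universal coefficient sequence.

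The principal obstacle is the last paragraph: controlling $\mathcal{P}_2(v_{2k})$ when $v_{2k}$ has a non-trivial component coming from $\mathrm{Tor}(H^{2k+1}(X;\bb{Z}),\bb{Z}_2)$. One needs to combine the quadratic refinement property of $\mathcal{P}_2$ with the fact that Bockstein-image classes pair trivially with $[X]$ under cup product, to verify that only the ``free part'' of $v_{2k}$ modulo $2$ contributes to $\langle \mathcal{P}_2(v_{2k}),[X]\rangle \bmod 4$.
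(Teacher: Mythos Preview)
The paper does not prove this proposition; it is quoted from Morita \cite{Morita} and used as a black box (for instance in the proof of Theorem~\ref{4Arf-topology}). So there is no in-paper argument to compare against.

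Your route via van der Blij is the natural one, and the bulk of it is correct: the identification of a characteristic element $c\in V=H^{2k}(X;\zz)/\textnormal{torsion}$ via the Wu formula, the congruence $\sigma(X)\equiv\lambda(c,c)\pmod 8$, the computation $\langle\mc{P}_2(\bar c),[X]_4\rangle=\lambda(c,c)\bmod 4$ for an integrally liftable class, and the decomposition $v_{2k}=\bar c+\varepsilon$ with $\varepsilon$ annihilating the image of $V$ are all fine, as is the vanishing of the cross term.

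The point you flag as the ``principal obstacle'' is a genuine gap, and your sketch does not close it. You need $\langle\mc{P}_2(\varepsilon),[X]_4\rangle=0$. If $\varepsilon$ happened to be the mod $2$ reduction of an integral torsion class $t$, this would follow immediately, since $t\cup t=0$ in $H^{4k}(X;\zz)\cong\zz$. But the annihilator of $\textnormal{image}(V/2V)$ is in general larger than the image of the torsion subgroup of $H^{2k}(X;\zz)$: when there is $2$-torsion it has twice the dimension, the extra half coming from the Bockstein $H^{2k-1}(X;\zz_2)\to H^{2k}(X;\zz_2)$, and for those classes a separate argument is required. Saying ``$\varepsilon$ comes from torsion via the universal coefficient sequence'' conflates these two sources. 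One way to finish is to relate $\mc{P}_2$ on such classes to the linking form on the torsion of $H^*(X;\zz)$; another is to bypass the integral lattice entirely and deduce the mod $4$ statement from the mod $8$ identity $\sigma(X)\equiv\mathrm{BK}(H^{2k}(X;\zz_2),\lambda,\mc{P}_2)\pmod 8$ (Theorem~\ref{Morita}) together with the elementary fact $\mathrm{BK}(V,\lambda,q)\equiv q(v)\pmod 4$ for $\zz_4$-valued quadratic enhancements over $\zz_2$, which one checks on indecomposable forms.
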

The algebraic analogue of Proposition \ref{mod4wu} is stated as follows:
\begin{proposition} \label{signature-Psq}
Let $(C, \phi)$ be a $4k$-dimensional symmetric Poincar\'e complex over $\mathbb{Z}$, then
$$\sigma(C, \phi) = \mc{P}_2(v_{2k})\in \zz_4,$$
where  $v_{2k} \in H^{2k}(C ; \bb{Z}_2)$ is the $2k$-th algebraic Wu class of  $(C, \phi).$
\end{proposition}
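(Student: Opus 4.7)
The plan is to reduce to the case of a nonsingular symmetric form in the middle degree and then invoke the classical congruence of van der Blij. First, I would argue that both sides are $L$-theory invariants: the signature is by definition, and one can check directly from the chain-level formulas that the algebraic Wu class $v_{2k}$ and the value $\mathcal{P}_2(v_{2k}) \in \mathbb{Z}_4$ are preserved under symmetric Poincar\'e cobordism. Since $L^{4k}(\mathbb{Z}) \cong \mathbb{Z}$ is generated by the signature and every class is represented by a nonsingular symmetric form $(M,\lambda)$ on a finitely generated free $\mathbb{Z}$-module concentrated in degree $2k$, I may assume from the outset that $C$ is concentrated in degree $2k$ with trivial differential, and that $\phi_0 \co C^{2k} \to C_{2k}$ is the adjoint of $\lambda$.

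Next I would unpack the algebraic Wu class in this setting. With $C$ concentrated in degree $2k$ we have $H^{2k}(C;\mathbb{Z}_2) = M/2M$, and the defining property of $v_{2k}$ translates, under the identification provided by $\phi_0$, into the classical condition that $v \in M$ be a characteristic element:
$$\lambda(v,x) \equiv \lambda(x,x) \pmod 2 \quad \textnormal{for all } x \in M.$$
I would then apply Definition \ref{algebraic-Pont} to $v_{2k}$. A cocycle representative of $v_{2k}$ is a chain map $(u,v') \co C \to B(2k,2)$ where $u$ corresponds to $v \in M$ and $v' = 0$ because the differential of $C$ is zero. The formula $\mathcal{P}_2(\phi)(u,v') = \phi_0(u,v') + 2\phi_1(v',u)$ therefore collapses to
$$\mathcal{P}_2(v_{2k}) = \lambda(v,v) \pmod 4,$$
since the $\phi_1$ contribution vanishes.

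The conclusion then follows from the classical lemma of van der Blij: for any nondegenerate symmetric bilinear form on a free $\mathbb{Z}$-module with characteristic element $v$,
$$\sigma(\lambda) \equiv \lambda(v,v) \pmod 8.$$
Reducing this mod $4$ and combining with the identification of the previous step gives $\sigma(C,\phi) = \mathcal{P}_2(v_{2k}) \in \mathbb{Z}_4$, as required.

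The main obstacle I anticipate is the bookkeeping in the first two steps: verifying the cobordism invariance of $\mathcal{P}_2(v_{2k})$ chain-theoretically (so that the reduction to a middle-dimensional form is legitimate) and checking that, after this reduction, the algebraic Wu class obtained from the symmetric Poincar\'e structure $\phi$ matches the characteristic element of $\lambda$ under the mod $2$ reduction $M \twoheadrightarrow M/2M$. Once these identifications are in place, the remainder of the argument is a clean appeal to van der Blij.
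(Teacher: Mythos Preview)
Your argument is correct, and in fact it supplies more than the paper does: the paper states Proposition~\ref{signature-Psq} as the algebraic analogue of Morita's geometric result (Proposition~\ref{mod4wu}) and gives no separate proof, relying implicitly on the citation to \cite{Morita}. Your route---reduce via $L^{4k}(\zz)$ to a middle-dimensional form $(M,\lambda)$, identify $v_{2k}$ with a characteristic element, observe that $\mc{P}_2(v_{2k})=\lambda(v,v)\bmod 4$ because the $\phi_1$-term vanishes when $C$ is concentrated in one degree, and then invoke van der Blij---is a clean, self-contained algebraic proof that does not pass through the geometric Pontryagin square or Morita's argument at all.

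The only points you flag as obstacles are genuine but routine: cobordism invariance of $\mc{P}_2(v_{2k})$ follows from the fact that the algebraic Pontryagin square is defined via the functorial map $(u,v)^{\%}$ on $Q$-groups (Definition~\ref{algebraic-Pont}), and the identification of the algebraic Wu class with a characteristic element in the concentrated case is immediate from the defining equation $\lambda(x,x)=\lambda(x,v)\in\zz_2$. So there is no gap. Compared with deferring to Morita, your approach has the advantage of being purely chain-level and of making transparent that the mod~$4$ statement is really a weakening of the mod~$8$ van der Blij congruence; the paper's approach has the advantage of brevity and of tying the algebraic statement directly to the classical topological one.
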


\subsubsection{The signature modulo $8$} \label{Sign-mod8}

In \cite[theorem~1.1]{Morita} Morita relates the Brown-Kervaire invariant and the signature modulo $8$.

Let $V$ be a $\bb{Z}_2$ vector space, $\lambda \co V \otimes V \to \bb{Z}_2$ a non-singular symmetric pairing and let $q \co V \to \bb{Z}_4$ be a quadratic enhancement of the symmetric form so that
$$q(x +y) = q(x) + q(y)+ i \lambda (x, y) \in \bb{Z}_4,$$
where $i = 2 \co \bb{Z}_2  \to \bb{Z}_4$ and $x, y \in V$.
\begin{definition}{(Brown \cite{Brown})}\qua \label{Gauss-sum-formula}
The \textbf{Brown-Kervaire $\mathrm{BK}(V, \lambda, q)$ invariant} is defined using a Gauss sum
$$\sum_{x \in V} i ^{q(x)} = \sqrt{2}^{\textnormal{dim}V} e^{2 \pi i \textnormal{BK}(V,  \lambda, q) / 8},$$
with $i^2=-1$ and $x \in V.$
\end{definition}

Two non-singular $\zz_4$-valued quadratic forms on a $\zz_2$-vector space V of finite dimension are Witt equivalent if and only if they have the same Brown-Kervaire invariant.

The theorem by Morita is formulated geometrically and it relates the signature of a $4k$-dimensional Poincar\'e space and the Brown-Kervaire invariant of the Pontryagin square, which is a quadratic enhancement of the cup product structure on the $\bb{Z}_2$-vector space $H^{2k}(X; \mathbb{Z}_2)$.

Morita's theorem is given in \cite{Morita} as follows:

\begin{theorem}{\rm (Morita \cite[theorem~1.1]{Morita})} \qua \label{Morita}
Let $X$ be a $4k$-dimensional Poincar\'e space, then
$$\sigma (X)  = \mathrm{BK}(H^{2k}(X; \mathbb{Z}_2), \lambda, \mc{P}_2) \in \zz_8.$$
\end{theorem}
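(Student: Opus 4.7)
My plan is to deduce Morita's theorem from the Brown--Kervaire classification of nondegenerate $\zz_4$-valued quadratic refinements of $\zz_2$-valued symmetric forms, which identifies the relevant Witt group with $\zz_8$ via $\mathrm{BK}$. The outline is: (i) show the mod $2$ middle cohomology of a $4k$-dimensional Poincar\'e complex packages into a nondegenerate $\zz_4$-quadratic form via $\mc{P}_2$; (ii) show this assignment is a Poincar\'e bordism invariant in the Witt group, hence descends to a homomorphism $L^{4k}(\zz)\to\zz_8$; (iii) observe $\sigma\bmod 8$ likewise descends to a homomorphism; (iv) since $L^{4k}(\zz)=\zz$ is cyclic, compare the two homomorphisms on a single generator, represented by $\CP^{2k}$.

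Step (i) is essentially assembled from earlier ingredients. The pairing $\lambda(x,y)=\langle x\cup y,[X]\rangle\bmod 2$ is nondegenerate by mod $2$ Poincar\'e duality, and $q(x)=\langle\mc{P}_2(x),[X]_4\rangle$ is a $\zz_4$-quadratic refinement of $\lambda$ by Proposition \ref{on a sum}(ii). Additivity of both sides of the conjectured identity is routine: $\sigma$ is additive over disjoint unions, and $\mathrm{BK}$ is additive over orthogonal sums because the Gauss sum is multiplicative.

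The main work lies in step (ii), the bordism invariance of $\mathrm{BK}$. If $[X]=\partial[Y]$, Lefschetz duality produces a half-dimensional subspace $L\subset H^{2k}(X;\zz_2)$, namely the image of $H^{2k}(Y;\zz_2)\to H^{2k}(X;\zz_2)$, and $L$ is Lagrangian for $\lambda$. The substantive point is to promote this to a Lagrangian for $q$ in $\zz_4$, i.e.\ to show $\langle\mc{P}_2(y|_X),[X]_4\rangle=0\in\zz_4$ for every $y\in H^{2k}(Y;\zz_2)$. This uses the naturality of the cup-$i$ product construction of Definition \ref{cup-i products} applied to the pair $(Y,X)$: the defining cocycle $y\cup_0 y+y\cup_1 d^{*}y$ for $\mc{P}_2(y|_X)$ cobounds when evaluated on $\partial[Y]_4$. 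I expect this step to be the main obstacle: the analogous mod $2$ argument is immediate, but the mod $4$ refinement forces one to track both summands of the Pontryagin cocycle simultaneously, keeping careful account of the integral lift $z$ with $d^{*}y+2z=0$ and of the cup-$1$ correction term.

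For step (iv), $\sigma(\CP^{2k})=1$ and $H^{2k}(\CP^{2k};\zz_2)=\zz_2$ is generated by $a=x^{k}\bmod 2$, the reduction of an integral class $\tilde a$ with $\langle\tilde a^{2},[\CP^{2k}]\rangle=1$. Because $a$ lifts integrally, the Pontryagin square reduces to the square of $\tilde a$, giving $q(a)=1\in\zz_4$. The Gauss sum is then
$$\sum_{x\in\zz_2} i^{q(x)} = 1+i = \sqrt{2}\,e^{2\pi i/8},$$
yielding $\mathrm{BK}=1\in\zz_8$, which matches $\sigma(\CP^{2k})=1$ and closes the argument.
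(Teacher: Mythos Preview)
The paper does not supply a proof of this statement: Theorem~\ref{Morita} is quoted from Morita \cite[Theorem~1.1]{Morita} and used as input, so there is no argument in the paper to compare your proposal against.

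Your outline is essentially the standard route to Morita's result and is sound, with two remarks. First, the anticipated difficulty in step~(ii) largely dissolves once you invoke that $\mc{P}_2\co H^{2k}(-;\zz_2)\to H^{4k}(-;\zz_4)$ is a \emph{natural} cohomology operation. For the inclusion $i\co X=\partial Y\hookrightarrow Y$ one has $\mc{P}_2(i^{*}y)=i^{*}\mc{P}_2(y)$, hence
\[
\langle \mc{P}_2(i^{*}y),[X]_4\rangle=\langle \mc{P}_2(y),i_{*}[X]_4\rangle=0,
\]
since $i_{*}[X]_4=0$ in $H_{4k}(Y;\zz_4)$. No cocycle-level tracking of the cup-$1$ correction is needed.

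Second, there is a small slip between frameworks: you establish \emph{geometric} Poincar\'e bordism invariance in step~(ii) but then assert a homomorphism out of $L^{4k}(\zz)$. These are different cobordism relations, and the map $\Omega^{P}_{4k}\to L^{4k}(\zz)$ is not injective, so geometric bordism invariance alone does not give a factorisation through $L^{4k}(\zz)$. The fix is immediate: run step~(ii) algebraically, using the algebraic Pontryagin square of Definition~\ref{algebraic-Pont}. An algebraic null-cobordism $(f\co C\to D,(\delta\phi,\phi))$ produces, via algebraic Lefschetz duality, a Lagrangian $L=\mathrm{im}\bigl(H^{2k}(D;\zz_2)\to H^{2k}(C;\zz_2)\bigr)$ for $\lambda$, and naturality of $\mc{P}_2(\phi)$ under chain maps (it is induced by $(-)^{\%}$) forces $\mc{P}_2(\phi)|_{L}=0$. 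Then $\mathrm{BK}$ genuinely descends to $L^{4k}(\zz)\cong\zz$, and your check on $\CP^{2k}$ in step~(iv) finishes the proof.
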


In the proofs of the main results of this paper we will require a reformulation of Morita's theorem in terms of symmetric Poincar\'e complexes $(C, \phi)$. The Pontryagin square which geometrically depends on the cup and cup-$1$ products, depends algebraically on the symmetric structure $\phi$ as was explained in the previous section. This is denoted by $\mc{P}_2(\phi)$, although for simplicity we will  write $\mc{P}_2$ when it is clear from the context.
\begin{theorem} \label{Morita-theorem-cx}
Let $(C, \phi)$ be a $4k$-dimensional symmetric Poincar\'e complex over $\mathbb{Z}$, then
$$\sigma (C, \phi) = \mathrm{BK}(H^{2k}(C; \mathbb{Z}_2), \phi, \mathcal{P}_2)  \in \zz_8.$$
\end{theorem}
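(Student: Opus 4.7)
The plan is to deduce this chain-complex statement from the geometric Morita theorem \ref{Morita} by showing that both sides of the identity define additive homomorphisms $L^{4k}(\zz) \to \zz_8$, and then comparing them on a single generator of $L^{4k}(\zz) \cong \zz$.

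First, I would establish that both invariants descend to well-defined homomorphisms from $L^{4k}(\zz)$. The signature $\sigma(C,\phi) \bmod 8$ is the classical $L$-theory signature, and its cobordism invariance is standard. For the right-hand side, the task is to show that if $(C,\phi)$ bounds a $(4k{+}1)$-dimensional symmetric Poincar\'e pair $(f \colon C \to D, (\delta\phi, \phi))$, then the $\zz_4$-valued quadratic form $\mathcal{P}_2(\phi)$ on $H^{2k}(C; \zz_2)$ has vanishing Brown--Kervaire invariant. The natural candidate for a Lagrangian is $L = \ker(f^* \colon H^{2k}(C; \zz_2) \to H^{2k}(D; \zz_2))$, which is half-dimensional by the long exact sequence of the pair together with Poincar\'e--Lefschetz duality; the vanishing of $\mathcal{P}_2(\phi)|_L$ follows from tracing the formula in Definition \ref{algebraic-Pont} in terms of $\phi_0$ and $\phi_1$ and using that classes pulled back from $D$ are absorbed by the relative coboundary $\delta\phi$ in the symmetric structure on the pair. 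Additivity of both sides under orthogonal sum is immediate: for the signature by definition, and for the Brown--Kervaire invariant by multiplicativity of the Gauss sum in Definition \ref{Gauss-sum-formula}.

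Second, since $L^{4k}(\zz) \cong \zz$ is detected by the signature, both sides become homomorphisms $\zz \to \zz_8$ and are therefore determined by their value on a single generator. A convenient generator is the symmetric Poincar\'e complex $(C(\CP^{2k}), \phi_{\CP^{2k}})$ associated to $\CP^{2k}$, since $\sigma(\CP^{2k}) = 1$. On this generator, the algebraic Pontryagin square on $H^{2k}(C(\CP^{2k}); \zz_2) \cong \zz_2$ coincides with the geometric Pontryagin square on $H^{2k}(\CP^{2k}; \zz_2)$, as explained in the remark following Definition \ref{algebraic-Pont}. Applying Morita's geometric Theorem \ref{Morita} to $\CP^{2k}$ then yields
$$\sigma(\CP^{2k}) \;=\; \mathrm{BK}(H^{2k}(\CP^{2k}; \zz_2),\, \phi,\, \mathcal{P}_2) \;=\; 1 \in \zz_8,$$
which verifies the identity on the generator, and hence on all of $L^{4k}(\zz)$ by additivity.

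The main obstacle is the first step, namely the purely algebraic verification that the Brown--Kervaire invariant vanishes on boundaries. This requires a careful chain-level computation using the definitions of $\mathcal{P}_2(\phi)$ and of the symmetric structure $(\delta\phi, \phi)$ on a Poincar\'e pair to confirm that $\ker(f^*)$ really is a Lagrangian for the quadratic refinement of the intersection form on $H^{2k}(C; \zz_2)$. Once this cobordism invariance is in place, the remainder of the argument is formal: additivity reduces to the generator, and comparison on $\CP^{2k}$ is an instance of Morita's geometric theorem.
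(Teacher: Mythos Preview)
The paper does not actually supply a proof of Theorem~\ref{Morita-theorem-cx}; it is simply stated as the chain-complex reformulation of Morita's geometric Theorem~\ref{Morita}, with the identification of the algebraic and topological Pontryagin squares spelled out in the remark following Definition~\ref{algebraic-Pont}. Your strategy---showing that both sides descend to additive maps $L^{4k}(\zz)\to\zz_8$ and then comparing on the generator $\CP^{2k}$---is a legitimate way to make this reformulation rigorous, and is in the spirit the paper intends.

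One correction in your Lagrangian step: with $f\colon C\to D$ the chain map in the pair, cohomology is contravariant, so there is no map $f^*\colon H^{2k}(C;\zz_2)\to H^{2k}(D;\zz_2)$ to take the kernel of. The correct candidate is
\[
L=\mathrm{im}\bigl(f^*\colon H^{2k}(D;\zz_2)\to H^{2k}(C;\zz_2)\bigr),
\]
or, equivalently via Poincar\'e duality on $C$, $\ker\bigl(f_*\colon H_{2k}(C;\zz_2)\to H_{2k}(D;\zz_2)\bigr)$. That $L$ is half-dimensional follows, as you say, from the long exact sequence of the pair and Poincar\'e--Lefschetz duality; vanishing of $\mathcal{P}_2(\phi)$ on $L$ then follows from naturality of the symmetric $Q$-groups along $f$, since the symmetric structure $\phi$ on $C$ is the image of $(\delta\phi,\phi)$ and so the induced element in $Q^{4k}(B(2k,2))$ factors through $D$. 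With this fix, your outline goes through.

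A minor variant that avoids invoking a manifold: by algebraic surgery below the middle dimension, any $4k$-dimensional symmetric Poincar\'e complex over $\zz$ is $L$-cobordant to one concentrated in degree $2k$, i.e.\ to a nonsingular symmetric form over $\zz$. On such a form the algebraic Pontryagin square is just the $\bmod\,4$ reduction of the integral form, and the identity $\sigma\equiv\mathrm{BK}\pmod 8$ is exactly Morita's original form-level statement. This sidesteps the detour through $\CP^{2k}$, but still requires the same cobordism-invariance of $\mathrm{BK}$ that you correctly flag as the crux.
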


When the signature modulo $8$ is divisible by $4$, it can be expressed as an Arf invariant of a certain quadratic form.

\begin{theorem}\label{4Arf-topology}An $4k$-dimensional symmetric Poincar\'e complex $(C, \phi)$ has signature $0$ mod $4$ if and only if
$L=\langle v_{2k}(C) \rangle \subset H^{2k}(C;\zz_2)$ is a sublagrangian of $(H^{2k}(C;\zz_2),\lambda,\mc{P}_2)$. If such is the case, there is
defined a sublagrangian quotient nonsingular symmetric form over $\zz_2$ with a $\zz_2$-valued enhancement
$$(W,\mu,h) = (L^{\perp}/L , [\lambda] , h = [\mc{P}_2]/2)$$
and the signature mod $8$ is given by
$$\sigma(C, \phi) = 4\textnormal{Arf}(W,\mu,h) \in  4\zz_2 \subset \zz_8.$$
\end{theorem}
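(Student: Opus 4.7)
The plan is to derive the theorem by combining the mod-$4$ signature formula of Proposition \ref{signature-Psq}, Morita's mod-$8$ formula of Theorem \ref{Morita-theorem-cx}, and a reduction of the Brown--Kervaire invariant through the sublagrangian $L = \langle v_{2k}\rangle$. First, by Proposition \ref{signature-Psq}, $\sigma(C,\phi) = \mc{P}_2(v_{2k}) \in \zz_4$. Since $L$ is one-dimensional, it is a sublagrangian of $(H^{2k}(C;\zz_2), \lambda, \mc{P}_2)$ precisely when $\mc{P}_2(v_{2k}) = 0 \in \zz_4$ (isotropy $\lambda(v_{2k}, v_{2k}) = 0$ being automatic, as the mod-$2$ reduction of $\mc{P}_2(v_{2k})$). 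This gives the iff statement.

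Assuming $L$ is a sublagrangian, the next step is to verify that $(W, \mu, h)$ is a well-defined nonsingular $\zz_2$-valued quadratic form. The algebraic Wu formula gives $\lambda(x, x) = \langle \mathrm{Sq}^{2k}(x), [C]\rangle = \lambda(v_{2k}, x)$ for every $x \in H^{2k}(C;\zz_2)$. Hence for $x \in L^\perp$ we have $\lambda(x, x) = \lambda(v_{2k}, x) = 0$, so the mod-$2$ reduction of $\mc{P}_2(x)$ vanishes and $\mc{P}_2(x)$ lies in $2\zz_4 \cong \zz_2$; division by $2$ defines $h(x) \in \zz_2$. The quadratic refinement identity of Proposition \ref{on a sum},
$$\mc{P}_2(x+y) = \mc{P}_2(x) + \mc{P}_2(y) + 2\lambda(x, y),$$
implies that $h$ descends to $W = L^\perp/L$ (using $\mc{P}_2(v_{2k}) = 0$ and $\lambda(v_{2k}, x) = 0$ for $x \in L^\perp$) and satisfies $h(x+y) = h(x) + h(y) + \mu(x,y)$ in $\zz_2$, so $h$ is a quadratic refinement of $\mu$. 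Nonsingularity of $(W, \mu)$ is the standard sublagrangian quotient property.

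By Theorem \ref{Morita-theorem-cx}, $\sigma(C, \phi) = \mathrm{BK}(H^{2k}(C;\zz_2), \lambda, \mc{P}_2) \in \zz_8$. Combining this with the sublagrangian reduction $\mathrm{BK}(V, \lambda, q) = \mathrm{BK}(L^\perp/L, [\lambda], [q])$ for $\zz_4$-quadratic forms on $\zz_2$-vector spaces gives $\sigma(C, \phi) = \mathrm{BK}(W, \mu, [\mc{P}_2])$. Since $[\mc{P}_2] = 2h$ takes values in $2\zz_4$, the Gauss sum of Definition \ref{Gauss-sum-formula} collapses to
$$\sum_{x \in W} i^{[\mc{P}_2](x)} \;=\; \sum_{x \in W} (-1)^{h(x)} \;=\; (-1)^{\mathrm{Arf}(W,\mu,h)} \sqrt{2}^{\dim W}$$
by the classical Gauss sum formula for $\zz_2$-valued quadratic refinements. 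Comparing with the defining equation for $\mathrm{BK}$ gives $\sigma(C, \phi) = 4\,\mathrm{Arf}(W, \mu, h) \in 4\zz_2 \subset \zz_8$, as claimed.

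The main obstacle I anticipate is the sublagrangian reduction $\mathrm{BK}(V, \lambda, q) = \mathrm{BK}(L^\perp/L, [\lambda], [q])$ for $\zz_4$-valued quadratic forms. Although this is a standard fact, a careful algebraic proof requires either splitting the Gauss sum along the filtration $L \subset L^\perp \subset V$ (showing that the cosets outside $L^\perp$ cancel against one another) or producing an orthogonal decomposition $V \cong (L^\perp/L) \perp H(L)$ with the hyperbolic summand contributing $\mathrm{BK}(H(L)) = 0$. Executing either route cleanly in the present algebraic framework is the technical heart of the argument.
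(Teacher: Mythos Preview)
Your proposal is correct and follows essentially the same route as the paper: both use Proposition~\ref{signature-Psq} to identify the mod-$4$ signature with $\mc{P}_2(v_{2k})$, the Wu identity $\lambda(x,x)=\lambda(x,v_{2k})$ to describe $L^\perp$, and Theorem~\ref{Morita-theorem-cx} together with the sublagrangian reduction of $\mathrm{BK}$ to obtain the Arf expression. Your write-up is in fact more explicit than the paper's at the two points you flag---the well-definedness of $h$ on $L^\perp/L$ and the Gauss-sum identity $\sum_{x\in W}(-1)^{h(x)}=(-1)^{\mathrm{Arf}(W,\mu,h)}\sqrt{2}^{\dim W}$---both of which the paper simply asserts; one minor caveat is that your phrase ``since $L$ is one-dimensional'' should allow for $v_{2k}=0$, in which case $L=0$ and the claim is trivial (the paper handles this case distinction at the end of its proof).
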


\begin{proof}
By Morita \cite[Theorem 1.1]{Morita} we know that $$\sigma(C, \phi) = \mathrm{BK}(H^{2k}(C;\zz_2),\lambda,\mc{P}_2)  \subset \zz_8.$$
We need to prove that if $\mathrm{BK}(H^{2k}(C;\zz_2),\lambda,\mc{P}_2) \equiv 0 \pmod{4}$ then,
 $$\mathrm{BK}(H^{2k}(C;\zz_2),\lambda,\mc{P}_2)
= 4\textnormal{Arf}(W,\mu,h) \in  4\zz_2 \subset \zz_8.$$
$\left(H^{2k}(C;\zz_2),\lambda,\mc{P}_2\right) $ is a nonsingular symmetric form over $\zz_2$ with a $\zz_4$-valued quadratic enhancement given by the Pontryagin square. Denote its dimension by  $n = \textnormal{dim}(H^{2k}(C;\zz_2))$.
The Wu class $v=v_{2k}(C) \in H^{2k}(C;\zz_2)$ is such that
$$\lambda(x,x) = \lambda(x,v) \in \zz_2 \textnormal{ with } x\in H^{2k}(C;\zz_2).$$
The following function is linear
$$f \co H^{2k}(C;\zz_2) \longrightarrow \zz_2 ; x \mapsto \lambda(x,x) = j \mc{P}_2(x) = \lambda(x,v),$$
where $j\mc{P}_2(x)\in \zz_2$ is the mod $2$ reduction of $\mc{P}_2(x) \in \zz_4.$

The following identity \eqref{identity} relating the Pontryagin square and mod $4$ reduction of the Brown-Kervaire invariant is a consequence of Morita \cite[theorem 1.1]{Morita} and \cite[Proposition 2.3]{Morita}
\begin{equation}\label{identity}[\mathrm{BK}(H^{2k}(C;\zz_2),\lambda,\mc{P}_2)] = \mc{P}_2(v) \in \zz_4.\end{equation}
So $\mathrm{BK}(H^{2k}(C;\zz_2),\lambda,\mc{P}_2) \in \zz_8$ is divisible by $4$ if and only if $\mc{P}_2(v)=0 \in \zz_4$. \\
If $\mc{P}_2(v)=0 \in \zz_4$ then $\lambda(v,v) = 0 \in \zz_2$ and the \textit{Wu sublagrangian} $L=\langle v \rangle \subset (H^{2k}(C;\zz_2),\lambda,\mc{P}_2)$
is defined, with $L \subseteq L^\perp = \left\{x \in H^{2k}(C;\zz_2)|\lambda(x,x)=0 \in \zz_2\right\}$.
The \textit{maximal isotropic subquotient} $(L^{\perp}/L, [\lambda])$
 has a canonical $\zz_2$-valued quadratic enhancement $[h]\co x \mapsto [\mc{P}_2(x)]/2$ and
$$\mathrm{BK}(H^{2k}(C;\zz_2),\lambda,\mc{P}_2) = 4\mathrm{Arf}(L^{\perp}/L,[\lambda],[h]) \subset 4\zz_2 \subset \zz_8.$$
For the dimension of $L^{\perp}/L$ there are two cases, according as to whether $v=0$ or $v\neq 0:$
\begin{itemize}
\item[(i)] If the Wu class is $v=0$ then $(H^{2k}(C;\zz_2), \lambda)$ is already isotropic then $L^{\perp}/L = H^{2k}(C;\zz_2),$ and $\mathrm{dim}(L^{\perp}/L) = n.$
\item[(ii)] If the Wu class is $v \neq 0$ then $(H^{2k}(C;\zz_2), \lambda)$ is anisotropic and $\mathrm{dim}(L^{\perp}/L) = n-2.$
\proved
\end{itemize}
\end{proof}


\section{The signature of a fibration modulo $8$} \label{mod8-theorems}

\subsection{Obstructions to multiplicativity modulo 8 of a fibration} \label{obstructions}

By the results of Meyer in \cite{Meyerpaper} and of Hambleton, Korzeniewski and Ranicki in \cite{modfour} the signature of a fibration
$F^{2m} \to E^{4k} \to B^{2n}$ of  geometric Poincar\'e complexes is multiplicative mod $4$
$$\sigma(E) - \sigma(B) \sigma(F) =0 \in \zz_4 .$$
If we set $M = E \sqcup - (B \times F)$, where $-$ reverses the orientation, then $M$ has signature
$$\sigma(M) = \sigma(E) - \sigma(B)\sigma(F) \in \zz,$$
so that $\sigma(M) = 0 \in \zz_4$ and Theorem \ref{4Arf-topology} can be applied to $M$.

From Theorem \ref{4Arf-topology} we know that when the signature is divisible by $4$, it is detected modulo $8$ by the Arf invariant. This can be applied in the situation of the signature of a fibration.
\begin{theorem}\label{4Arf-general-fibration}
Let $F^{2m} \to E^{4k} \to B^{2n}$ be a   Poincar\'e duality fibration. With $(V, \lambda)= \left(H^{2k}(E, \zz_2), \lambda \right)$ and $(V', \lambda')= \left(H^{2k}(B \times F), \zz_2), \lambda' \right)$, the signatures mod $8$ of the fibre, base and total space are related by
$$
\sigma(E) - \sigma(B \times F) = 4 \mathrm{Arf}\left( L^{\perp}/L , [\lambda \oplus - \lambda'], \frac{\left[ \mc{P}_2 \oplus-\mc{P}'_2 \right]}{2} \right) \in 4\zz_2 \subset \zz_8,
$$
where $L^{\perp} =\left\{(x, x') \in V \oplus V' \vert \lambda(x,x) = \lambda'(x', x') \in \zz_2 \right\}$ and $L = \langle v_{2k} \rangle \subset L^{\perp}$, with $v_{2k}=(v_{2k}(E), v_{2k}(B \times F)) \in V \oplus V'$ the Wu class of $E \sqcup -(B \times F)$ and $\mc{P}_2$ and $\mc{P}'_2$ the Pontryagin squares of $E$ and $B \times F$ respectively.
\end{theorem}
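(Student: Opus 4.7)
The strategy is to reduce the statement to a direct application of Theorem \ref{4Arf-topology} applied to the disjoint union $M = E \sqcup -(B \times F)$, which is itself a $4k$--dimensional Poincar\'e duality space (with fundamental class $[E] - [B \times F]$). First I would observe that
$$\sigma(M) = \sigma(E) - \sigma(B \times F) = \sigma(E) - \sigma(B)\sigma(F) \in \bb{Z},$$
since the signature is multiplicative on products. By the result of Hambleton, Korzeniewski and Ranicki recalled in the introduction (generalizing Meyer's theorem for surface bundles), the right-hand side is divisible by $4$, so $\sigma(M)\equiv 0 \pmod 4$. This is precisely the hypothesis needed to invoke Theorem \ref{4Arf-topology}.

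Next I would identify the ingredients of Theorem \ref{4Arf-topology} for the symmetric Poincar\'e complex $(C(M),\phi_M)$ in terms of the two pieces. On $\bb{Z}_2$--coefficients, the middle cohomology splits as
$$H^{2k}(M;\bb{Z}_2) = V \oplus V', \quad V=H^{2k}(E;\bb{Z}_2),\ V'=H^{2k}(B\times F;\bb{Z}_2),$$
and the symmetric pairing is $\lambda_M = \lambda \oplus (-\lambda')$ (the sign being irrelevant over $\bb{Z}_2$). The $\bb{Z}_4$--valued Pontryagin square, however, uses the fundamental class and so picks up the orientation-reversal of the second summand: $\mc{P}_{2,M} = \mc{P}_2 \oplus(-\mc{P}'_2)$. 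The Wu class of a disjoint union is the direct sum of the Wu classes of the pieces, so $v_{2k}(M)=(v_{2k}(E),v_{2k}(B\times F))$, which generates the Wu sublagrangian $L\subset V\oplus V'$.

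It then remains to compute $L^{\perp}$ explicitly using the defining property $\lambda(x,v_{2k}(E))=\lambda(x,x)$ and similarly for $B\times F$. The orthogonality condition $\lambda_M((x,x'),(v_{2k}(E),v_{2k}(B\times F)))=0\in\bb{Z}_2$ becomes
$$\lambda(x,v_{2k}(E)) + \lambda'(x',v_{2k}(B\times F)) = \lambda(x,x)+\lambda'(x',x') = 0 \in\bb{Z}_2,$$
which is exactly the condition $\lambda(x,x)=\lambda'(x',x')$ appearing in the statement. Applying Theorem \ref{4Arf-topology} to $M$ with this sublagrangian then yields
$$\sigma(M) = 4\,\mathrm{Arf}\Bigl( L^{\perp}/L,\,[\lambda \oplus -\lambda'],\,\tfrac{[\mc{P}_2\oplus -\mc{P}'_2]}{2}\Bigr) \in 4\bb{Z}_2\subset\bb{Z}_8,$$
which is precisely the claimed formula.

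The main (but minor) point requiring care is the bookkeeping of signs: one must verify that reversing the orientation of $B\times F$ leaves the mod-$2$ intersection form and the Wu class unchanged, but negates the $\bb{Z}_4$--valued Pontryagin square, so that $\mc{P}_{2,M}=\mc{P}_2\oplus -\mc{P}'_2$ rather than $\mc{P}_2\oplus \mc{P}'_2$. Everything else is a direct unpacking of Theorem \ref{4Arf-topology}, so no new algebraic work is required beyond combining that theorem with the mod-$4$ multiplicativity of the signature.
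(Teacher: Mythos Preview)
Your proposal is correct and follows essentially the same route as the paper: form the disjoint union $M=E\sqcup -(B\times F)$, use Hambleton--Korzeniewski--Ranicki to get $\sigma(M)\equiv 0\pmod 4$, and then apply Theorem \ref{4Arf-topology}. The only cosmetic difference is that the paper phrases the ``combine the two pieces'' step via the additivity of the Brown--Kervaire invariant (Morita's Proposition 2.1(i)) rather than by working directly with the Poincar\'e complex of $M$, but these amount to the same thing; your explicit computation of $L^{\perp}$ is a nice addition that the paper leaves implicit.
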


\begin{proof}
We first rewrite the signatures $\sigma(E)$ and $\sigma(F \times B)$ in terms of Brown-Kervaire invariants,  and use the additivity properties of the Brown-Kervaire invariant described by Morita in \cite[Proposition 2.1 (i)]{Morita} as follows
\begin{align*}
\sigma(E) - \sigma(B \times F) &=\mathrm{BK}(H^{2k}(E;\zz_2), \lambda, \mc{P}_2) - \mathrm{BK}(H^{2k}(B \times F;\zz_2), \lambda', \mc{P}'_2) \in \zz_8 \\
& = \mathrm{BK}\left(V \oplus V', \lambda \oplus -\lambda', \mc{P}_2 \oplus - \mc{P}'_2 \right) \in \zz_8.
\end{align*}

We know by Meyer \cite{Meyerpaper} and by Hambleton, Korzeniewski and Ranicki \cite[Theorem A]{modfour} that
$$ \sigma(E) - \sigma(B \times F) = 0 \in \zz_4.$$
 Applying Theorem \ref{4Arf-topology}, $\mathrm{BK}\left(V \oplus V', \lambda \oplus -\lambda', \mc{P}_2 \oplus - \mc{P}'_2 \right) \in 4\zz_2 \subset \zz_8$ can be written as an Arf invariant,
$$4 \mathrm{Arf}\left( L^{\perp}/L , [\lambda \oplus - \lambda'],  \frac{\left[\mc{P}_2 \oplus-\mc{P}'_2\right] }{2} \right) \in \zz_8,$$
with $L^{\perp} =\left\{(x,x') \in V\oplus V' \vert \lambda (x, x)= \lambda'(x',x')  \in \zz_2 \right\}$ and the Wu sublagrangian $L = \langle (v_{2k}(E), v_{2k}(B \times F)) \rangle \subset L^{\perp}$, with $(v_{2k}(E), v_{2k}(B \times F))$ the  Wu class given by $(v_{2k}(E), v_{2k}(B \times F)) \in H^{2k}(E;\zz_2) \oplus H^{2k}(B \times F;\zz_2) = V \oplus V'. $\proved
\end{proof}

In the following theorem we state the chain complex version of Theorem \ref{4Arf-general-fibration}.

\begin{theorem}\label{algebraic-version-4Arf-obstruction}
Let $(C, \phi)$ be the $2n$-dimensional $(-1)^n$-symmetric Poincar\'e complex, and let $(A, \alpha, U)$ be a $(\bb{Z}, m)$-symmetric representation. We shall write $(D, \Gamma) = (C, \phi) \otimes \bar{S}^m(A, \alpha, U)$ and $(D', \Gamma')=(C, \phi) \otimes \bar{S}^m(A, \alpha, \epsilon)$. Here $(D, \Gamma)$ and $(D', \Gamma')$ are $(2n+2m)$-dimensional symmetric complexes and $(2n+2m)$ is divisible by $4$. Then
$$
\sigma(D, \Gamma) - \sigma(D', \Gamma') = 4 \mathrm{Arf}\left( L^{\perp}/L , [\Gamma_0 \oplus -\Gamma'_0], \frac{ \left[\mc{P}_2 \oplus-\mc{P}'_2\right] }{2} \right) \in 4\zz_2 \subset \zz_8,
$$
where $L^{\perp} =\left\{(x,x') \in H^{2k}(D;\zz_2) \oplus H^{2k}(D';\zz_2) \vert \Gamma_0(x, x) =\Gamma'_0(x', x') \in \zz_2 \right\}$ and the Wu sublagrangian $L = \langle (v_{2k}, v'_{2k}) \rangle \subset L^{\perp}$, with $(v_{2k}, v'_{2k})$ the algebraic Wu class $(v_{2k}, v'_{2k}) \in H^{2k}(D;\zz_2) \oplus H^{2k}(D';\zz_2).$
\end{theorem}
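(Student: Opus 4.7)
The plan is to run the Brown--Kervaire argument of Theorem \ref{4Arf-general-fibration} in the algebraic symmetric Poincar\'e complex setting, with Theorem \ref{Morita-theorem-cx} playing the role that Morita's topological formula played in the geometric version. First I would apply Theorem \ref{Morita-theorem-cx} to each of the $(2n+2m)$-dimensional symmetric Poincar\'e complexes $(D,\Gamma)$ and $(D',\Gamma')$ to rewrite their signatures as
\begin{align*}
\sigma(D,\Gamma) &= \mathrm{BK}(H^{2k}(D;\zz_2),\Gamma_0,\mathcal{P}_2(\Gamma)) \in \zz_8,\\
\sigma(D',\Gamma') &= \mathrm{BK}(H^{2k}(D';\zz_2),\Gamma'_0,\mathcal{P}_2(\Gamma')) \in \zz_8,
\end{align*}
and then use additivity of the Brown--Kervaire invariant under orthogonal sum (flipping the sign on the primed factor to model the $-$ orientation on the product side) to package the difference as
$$
\sigma(D,\Gamma)-\sigma(D',\Gamma') \;=\; \mathrm{BK}\bigl(H^{2k}(D;\zz_2)\oplus H^{2k}(D';\zz_2),\;\Gamma_0\oplus -\Gamma'_0,\;\mathcal{P}_2(\Gamma)\oplus -\mathcal{P}_2(\Gamma')\bigr)\in\zz_8.
$$

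Next I would invoke the chain-level mod~$4$ multiplicativity of the signature due to Hambleton--Korzeniewski--Ranicki in the algebraic form used throughout Section \ref{model}: the tensor product of $(C,\phi)$ with the symmetric representation $\bar{S}^m(A,\alpha,U)$ and with the trivial representation $\bar{S}^m(A,\alpha,\epsilon)$ produce symmetric Poincar\'e complexes whose signatures agree modulo $4$. Hence the displayed Brown--Kervaire invariant lies in $4\zz_2\subset\zz_8$, which is precisely the hypothesis needed to apply Theorem \ref{4Arf-topology} to the orthogonal-sum form $(V\oplus V',\Gamma_0\oplus-\Gamma'_0,\mathcal{P}_2\oplus-\mathcal{P}'_2)$. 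That theorem converts the Brown--Kervaire invariant into $4\,\mathrm{Arf}$ of the maximal isotropic subquotient at the Wu sublagrangian.

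It then remains to identify the Wu class of the orthogonal sum with $(v_{2k},v'_{2k})$; this is immediate from the definition of the Wu class in terms of the symmetric form, since the Wu class of a direct sum of nonsingular symmetric forms over $\zz_2$ is the direct sum of the individual Wu classes. The description of $L^{\perp}$ as $\{(x,x')\mid \Gamma_0(x,x)=\Gamma'_0(x',x')\}$ falls out of the definition of the orthogonal complement of the sublagrangian $L=\langle(v_{2k},v'_{2k})\rangle$ together with the identity $\lambda(x,x)=\lambda(x,v)$ characterizing the Wu class.

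The main obstacle I foresee is not in any of these steps individually but in justifying the algebraic mod~$4$ multiplicativity, namely that the signature of $(C,\phi)\otimes \bar{S}^m(A,\alpha,U)$ and that of $(C,\phi)\otimes \bar{S}^m(A,\alpha,\epsilon)$ differ by a multiple of $4$. Geometrically this is the content of \cite{Meyerpaper, modfour}, but in the algebraic framework one must check that the chain-level model constructed in Section \ref{model}, and in particular Remark \ref{equal-signatures}, transports the mod~$4$ result to the tensor-product symmetric Poincar\'e complex used here. Once that divisibility is in place, the remainder of the proof is the clean application of Theorems \ref{Morita-theorem-cx} and \ref{4Arf-topology} together with additivity of $\mathrm{BK}$ under orthogonal sum.
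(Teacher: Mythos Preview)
Your proposal is correct and follows essentially the same route as the paper, which in fact offers no separate proof of Theorem~\ref{algebraic-version-4Arf-obstruction} but presents it as the direct chain-complex analogue of Theorem~\ref{4Arf-general-fibration}: rewrite each signature as a Brown--Kervaire invariant via Theorem~\ref{Morita-theorem-cx}, combine them by additivity, use the mod~$4$ result of \cite{modfour} to land in $4\zz_2$, and then apply Theorem~\ref{4Arf-topology}. Regarding your one stated concern, the algebraic mod~$4$ multiplicativity $\sigma(D,\Gamma)\equiv\sigma(D',\Gamma')\pmod 4$ is precisely what Hambleton--Korzeniewski--Ranicki establish in \cite{modfour} at the level of symmetric Poincar\'e complexes and transfer functors, so no additional work is needed beyond citing that reference.
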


Note that when both $m$ and $n$ are odd in the fibration $F^{2m} \to E^{4k} \to B^{2n}$ then by Meyer \cite{Meyerpaper} and by Hambleton, Korzeniewski and Ranicki \cite{modfour} we have that
$$\sigma(E) = 0 \in \zz_4.$$
So  the general formula for the signature mod $8$ of a fibration given geometrically in
\ref{4Arf-general-fibration} and algebraically in \ref{algebraic-version-4Arf-obstruction} simplifies in the case of a fibration $F^{4i+2} \to E^{4k} \to B^{4j+2}$ to the expression in Proposition \ref{4Arf-odd-m-n-geometric}  geometrically or \ref{algebraic-version-4Arf-obstruction-4i+2} algebraically.

\begin{proposition}\label{4Arf-odd-m-n-geometric}
Let $F^{4i+2} \to E^{4k} \to B^{4j+2}$ be an oriented Poincar\'e duality fibration, then
$$\sigma(E) =\mathrm{BK}(H^{2k}(E;\zz_2), \lambda, \mc{P}_2) = 4 \mathrm{Arf}\left( L^{\perp}/L , [\lambda],  \frac{\left[\mc{P}_2\right]}{2}  \right) \in \zz_8,$$
where $L^{\perp} =\left\{x \in H^{2k}(E;\zz_2) \vert \lambda (x,x)= 0 \in \zz_2 \right\}$ and $L = \langle v_{2k} \rangle \subset L^{\perp}$, with $v_{2k}(E)$ the  Wu class $v_{2k}(E) \in H^{2k}(E;\zz_2).$
\end{proposition}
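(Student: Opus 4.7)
The strategy is to apply Theorem \ref{4Arf-topology} directly to $E$ as a $4k$-dimensional Poincaré complex, bypassing the explicit passage through the product $B \times F$ used in the proof of Theorem \ref{4Arf-general-fibration}. The first equality in the statement is nothing but Morita's Theorem \ref{Morita}: since $E$ is a $4k$-dimensional oriented Poincaré complex, one immediately has $\sigma(E) = \mathrm{BK}(H^{2k}(E;\zz_2),\lambda,\mc{P}_2) \in \zz_8$, so there is nothing further to do for that half of the statement.

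For the second equality, the plan is to verify the divisibility hypothesis $\sigma(E) \equiv 0 \pmod 4$ of Theorem \ref{4Arf-topology} and then apply that theorem directly. Since both $\dim B = 4j+2$ and $\dim F = 4i+2$ are congruent to $2 \pmod 4$, the middle-dimensional intersection forms of $B$ and $F$ are skew-symmetric rather than symmetric, and by the standard convention $\sigma(B) = \sigma(F) = 0 \in \zz$. The mod $4$ multiplicativity theorem of Hambleton, Korzeniewski and Ranicki \cite{modfour} then gives
$$\sigma(E) \;\equiv\; \sigma(B)\,\sigma(F) \;=\; 0 \pmod{4},$$
so $\sigma(E)$ lies in $4\zz_2 \subset \zz_8$, which is exactly the hypothesis needed to invoke Theorem \ref{4Arf-topology}.

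With that hypothesis satisfied, Theorem \ref{4Arf-topology} applies verbatim to the $4k$-dimensional symmetric Poincaré complex $(C(E),\phi_E)$. The Wu class $v_{2k}(E) \in H^{2k}(E;\zz_2)$ generates a sublagrangian $L = \langle v_{2k}(E)\rangle$ contained in $L^{\perp} = \{x \in H^{2k}(E;\zz_2) \mid \lambda(x,x)=0\}$, and the maximal isotropic subquotient $(L^{\perp}/L, [\lambda], [\mc{P}_2]/2)$ is a nonsingular $\zz_2$-valued quadratic enhancement of a nonsingular symmetric form over $\zz_2$, computing the signature mod $8$ as
$$\sigma(E) \;=\; 4\,\mathrm{Arf}\!\left(L^{\perp}/L,\,[\lambda],\,\frac{[\mc{P}_2]}{2}\right) \in 4\zz_2 \subset \zz_8.$$
This yields the second equality and completes the proof.

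The only substantive input beyond formal manipulation is the mod $4$ vanishing $\sigma(E) \equiv 0 \pmod 4$, which depends on \cite{modfour}; once that is in hand, the remainder is an entirely routine specialization of Morita's theorem together with the Wu-sublagrangian reduction encoded in Theorem \ref{4Arf-topology}. Accordingly, I expect no genuine technical obstacle, and the main thing to be careful about is simply the dimensional bookkeeping that forces $\sigma(B) = \sigma(F) = 0$ in this case.
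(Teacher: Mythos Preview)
Your proof is correct and follows essentially the same approach as the paper: both argue that $\sigma(B)=\sigma(F)=0$ for dimension reasons, invoke the mod $4$ multiplicativity result of \cite{modfour} to get $\sigma(E)\equiv 0\pmod 4$, appeal to Morita's theorem for the Brown--Kervaire expression, and then apply Theorem \ref{4Arf-topology}. The only difference is the order of presentation and your slightly more explicit bookkeeping.
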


\begin{proof}
Here for dimension reasons $\sigma(F)$ and $\sigma(B)$ are both $0.$ Thus, by \cite[Theorem A]{modfour}, we know that the signature of $E$ is divisible by $4$.
So that we can write
\begin{align*}
\sigma(E) - \sigma(B \times F) &= \sigma(E)        =\mathrm{BK}(H^{2k}(E;\zz_2), \lambda, \mc{P}_2)  \in \zz_8, 
\end{align*}
and since $\sigma(E)= \mathrm{BK}(H^{2k}(E;\zz_2), \lambda, \mc{P}_2) =0 \in \zz_4$, then the result follows as an application of Theorem \ref{4Arf-topology}.
\end{proof}

Algebraically Proposition \ref{4Arf-odd-m-n-geometric} is restated as follows.

\begin{proposition}\label{algebraic-version-4Arf-obstruction-4i+2}
Let $(C, \phi)$ be a $4i+2$-dimensional $(-1)$-symmetric Poincar\'e complex, and let $(A, \alpha, U)$ be a $(\bb{Z}, 2j+1)$-symmetric representation. We shall write $(D, \Gamma) = (C, \phi) \otimes \bar{S}^{2j+1}(A, \alpha, U)$ and $(D', \Gamma')=(C, \phi) \otimes \bar{S}^{2j+1}(A, \alpha, \epsilon)$,
then $\sigma(D', \Gamma')=0 \in \zz$, $\sigma(D, \Gamma)=0 \in \zz_4$ and
$$
\sigma(D, \Gamma) = 4 \mathrm{Arf}\left( L^{\perp}/L , [\Gamma_0],  \frac{\left[\mc{P}_2\right]}{2}  \right) \in 4\zz_2 \subset \zz_8,
$$
where $L^{\perp} =\left\{x \in H^{2k}(D;\zz_2) \vert \Gamma_0(x, x)= 0  \in \zz_2 \right\}$ and $L = \langle v_{2k} \rangle \subset L^{\perp}$, with $v_{2k}$ the algebraic Wu class $v_{2k} \in H^{2k}(D;\zz_2).$
\end{proposition}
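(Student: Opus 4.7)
My plan is to mirror the proof of the geometric version in Proposition~\ref{4Arf-odd-m-n-geometric}, recast in the algebraic framework built up in Section~\ref{model}. The three assertions $\sigma(D',\Gamma')=0\in\zz$, $\sigma(D,\Gamma)\equiv 0\bmod 4$, and the Arf invariant formula correspond, respectively, to the dimension-count for $F\times B$, the Hambleton--Korzeniewski--Ranicki mod~$4$ theorem, and an application of Theorem~\ref{4Arf-topology}.

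First I would handle $\sigma(D',\Gamma')=0\in\zz$. When $U$ is replaced by the trivial augmentation $\epsilon$, the derived tensor product $(D',\Gamma')=(C,\phi)\otimes\bar S^{2j+1}(A,\alpha,\epsilon)$ is the ordinary external product of the symmetric Poincar\'e complexes $(C,\phi)$ (of dimension $4i+2$) and $\bar S^{2j+1}(A,\alpha)$ (of dimension $4j+2$). Under the external product pairing
\[
L^{4i+2}(\zz)\otimes L^{4j+2}(\zz)\longrightarrow L^{4i+4j+4}(\zz)=\zz,
\]
both input Witt classes live in $2$-torsion symmetric $L$-groups while the target $\zz$ is torsion free, so the image is $0$. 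Next, for $\sigma(D,\Gamma)\equiv 0\bmod 4$ I would invoke the algebraic mod~$4$ multiplicativity of \cite{modfour} in the derived form furnished by Section~\ref{Filtered-signature}, which gives
\[
\sigma(D,\Gamma)-\sigma(D',\Gamma')\equiv 0\pmod 4
\]
because $(D,\Gamma)$ and $(D',\Gamma')$ are built from the same data $(C,\phi)$ and $(A,\alpha)$ and differ only in whether the twist is by $U$ or by $\epsilon$. Combining with the previous step yields $\sigma(D,\Gamma)\equiv 0\bmod 4$. Finally, Theorem~\ref{4Arf-topology} applies to the $(4i+4j+4)$-dimensional symmetric Poincar\'e complex $(D,\Gamma)$: the algebraic Wu class $v_{2k}\in H^{2k}(D;\zz_2)$ spans a sublagrangian $L\subseteq L^\perp$ of $(H^{2k}(D;\zz_2),\Gamma_0)$ equipped with its Pontryagin-square enhancement $\mc{P}_2$, and
\[
\sigma(D,\Gamma)=4\,\mathrm{Arf}\bigl(L^\perp/L,\,[\Gamma_0],\,[\mc{P}_2]/2\bigr)\in 4\zz_2\subset\zz_8.
\]

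The main obstacle is Step~2: while the geometric mod~$4$ statement is proved in \cite{modfour}, one needs its algebraic translation for \emph{arbitrary} symmetric representations $(A,\alpha,U)$, not just those arising from actual fibrations. This requires carefully tracing the chain-complex model of Theorem~\ref{chain-iso} together with the transfer factorisation of Proposition~\ref{two-functors} to confirm that the obstruction to multiplicativity always lies in $4\zz\subset\zz$. Once this is in place, Steps~1 and~3 are essentially formal consequences of earlier material in the paper.
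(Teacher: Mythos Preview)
Your proposal is correct and follows essentially the same route as the paper. The paper does not give a separate proof of this proposition; it simply presents it as the algebraic restatement of the geometric Proposition~\ref{4Arf-odd-m-n-geometric}, whose proof runs: $\sigma(F)=\sigma(B)=0$ for dimension reasons, so by \cite[Theorem~A]{modfour} $\sigma(E)\equiv 0\bmod 4$, and then Theorem~\ref{4Arf-topology} applies. Your three steps are exactly the algebraic translation of this, and your caveat about Step~2 is fair---the paper relies on the fact that \cite{modfour} is itself proved algebraically (via the chain-complex transfer model), so the mod~$4$ statement is available for arbitrary symmetric representations, not just geometric ones.

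One minor remark on Step~1: your torsion argument via the product pairing is fine, but you can say something even simpler. With the trivial representation $\epsilon$ the signature of $(D',\Gamma')$ is literally the product $\sigma(C,\phi)\cdot\sigma(A,\alpha)$ of the signatures of the factors (this is the content of the untwisted case of Proposition~\ref{two-functors}), and each factor is the signature of a $(-1)$-symmetric form over $\zz$, hence zero. This is the direct algebraic analogue of the paper's ``for dimension reasons'' clause.
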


\subsubsection{Relation to other expressions in the literature for the signature of a fibre bundle}\label{Other-expressions}
\textbf{The Arf invariant and the second Stiefel-Whitney class}

In \cite{Meyerpaper} Meyer studied the signature of a surface bundle $F^2 \to E^4 \to B^2$, where both $F$ and $B$ are orientable surfaces of genus $h$ and $g$ respectively. Meyer expressed the signature of the total space in terms of the first Chern class of the complex vector bundle $\beta \co B \to BU(h)$ associated to the local coefficient system $\widetilde{B} \times_{\pi_1(B)} \bb{R}^{2h},$ determined by $H_1(- ; \mathbb{R})$ of the fibres,
$$\sigma(E) = 4 c_1(\beta) \in \zz.$$
\begin{gather*}
\tag*{\text{So that}}
\frac{\sigma(E)}{4} = c_1(\beta) \in \zz.
\end{gather*}

From \cite[problem 14-B]{Mil-Sta} the Chern classes of an $h$-dimensional complex vector bundle $\beta \co B \to BU(h)$ are integral lifts of the Stiefel-Whitney classes of the underlying oriented $2h$-dimensional real vector bundle $\beta^{\bb{R}}\co B \to BSO(2h)$. That is, the mod $2$ reduction of the first Chern class is the second Stiefel-Whitney class.
Hence the second Stiefel-Whitney class of the real vector bundle $\beta^{\bb{R}} \co B \to BSO(2h)$ associated to the local coefficient system $\widetilde{B} \times_{\pi_1(B)} \bb{R}^{2h}$ can be expressed as
\begin{equation}\label{sign-stiefel-surf}\frac{\sigma(E)}{4} = w_2(\beta^{\bb{R}}) \in \zz_2.\end{equation}

In Proposition \ref{4Arf-odd-m-n-geometric} we have shown that for a fibration $F^{4i+2} \to E^{4k} \to B^{4j+2}$, the signature mod $8$  can be expressed in terms of the Arf invariant.
In particular for a surface bundle,
\begin{equation}\label{sign-Arf-surf}\sigma(E)  = 4 \mathrm{Arf}\left( L^{\perp}/L , [\lambda],  \frac{\left[\mc{P}_2\right]}{2}  \right) \in \zz_8.\end{equation}

Combining both expressions in \eqref{sign-stiefel-surf} and \eqref{sign-Arf-surf}

$$\mathrm{Arf}\left( L^{\perp}/L , [\lambda],  \frac{\left[\mc{P}_2\right]}{2}  \right) = w_2(\beta^{\bb{R}}) \in \zz_2,$$
with $L^{\perp} =\left\{x \in H^{2k}(E;\zz_2) \vert \lambda(x,x)= 0 \in \zz_2 \right\}$ and $L = \langle v_{2k} \rangle \subset L^{\perp}.$

\medskip

\textbf{The Arf invariant and the Todd genus}

In \cite{Atiyah-cov} Atiyah considers a $4$-manifold $E$ which arises as a complex algebraic surface with a holomorphic projection $\pi \co E \to B$ for some complex structure on $B$. The fibres $F_b = \pi^{-1}(b)$ are algebraic curves but the complex structure varies with $b$.
Atiyah establishes that the Todd genera of $E$, $B$ and $F$ are related to the signature of $E$ by the equation
$$ \frac{\sigma(E)}{4} = T(E)-T(B)T(F) \in \zz.$$
Following this equation the signature modulo $8$ of the total space is detected by the reduction mod $2$ of difference $T(E)-T(B)T(F)$, since $\sigma(E) = 4(T(E)-T(B)T(F)) \in \zz_8$ is equivalent to $ \frac{\sigma(E)}{4} = T(E)-T(B)T(F) \in \zz_2.$
Comparing this with Proposition \ref{4Arf-odd-m-n-geometric} we have that
$$\mathrm{Arf}\left( L^{\perp}/L , [\lambda],  \frac{\left[\mc{P}_2\right]}{2}  \right) = T(E)-T(B)T(F) \in \zz_2,$$
with $L^{\perp} =\left\{x \in H^{2k}(E;\zz_2) \vert \lambda(x,x)= 0 \in \zz_2 \right\}$ and $L = \langle v_{2k} \rangle \subset L^{\perp}.$

\subsection{Multiplicativity mod $8$ in the $\zz_4$-trivial case.} \label{multiplicative-mod8}

In the previous section we have shown that in general there is an obstruction to multiplicativity modulo $8$ of a fibration  $F^{2m} \to E^{4k} \to B^{2n}$ given by the Arf invariant.

We shall now prove that by imposing the condition that  $\pi_1(B)$ acts trivially on $H^{m}(F, \bb{Z})/torsion \otimes \bb{Z}_4$, the obstruction disappears and the fibration has signature multiplicative modulo $8$. Geometrically the theorem is stated as follows.

\begin{theorem}\label{mod8-theorem} Let $F^{2m} \to E \to B^{2n}$ be an oriented Poincar\'e duality fibration. If the action of $\pi_1(B)$ on $H^m(F, \bb{Z})/torsion \otimes \bb{Z}_4$ is trivial, then
$$\sigma(E) - \sigma(F) \sigma(B) = 0\in \zz_8.$$
\end{theorem}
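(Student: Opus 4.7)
The plan is to apply Theorem \ref{algebraic-version-4Arf-obstruction} to the algebraic model built in Section \ref{model} and show that the $\zz_4$-triviality hypothesis forces the resulting Arf invariant to vanish. By Proposition \ref{two-functors} and Remark \ref{equal-signatures},
$$\sigma(E) = \sigma\bigl((C(\widetilde{B}), \phi) \otimes \bar{S}^m(A, \alpha, U)\bigr) \in \zz,$$
where $(A, \alpha, U)$ is the homology symmetric representation of $\zz[\pi_1(B)]$ with $A = H_m(F;\zz)/\text{torsion}$, $\alpha$ the intersection form on the fibre, and $U$ the representation induced by the fibre transport. Replacing $U$ by the trivial (augmentation) representation $\epsilon$ decouples the action from the chain complex of the base and yields the product formula
$$\sigma(F)\sigma(B) = \sigma\bigl((C(\widetilde{B}), \phi) \otimes \bar{S}^m(A, \alpha, \epsilon)\bigr) \in \zz.$$
Setting $(D, \Gamma) = (C(\widetilde{B}), \phi) \otimes \bar{S}^m(A, \alpha, U)$ and $(D', \Gamma') = (C(\widetilde{B}), \phi) \otimes \bar{S}^m(A, \alpha, \epsilon)$, Theorem \ref{algebraic-version-4Arf-obstruction} expresses the difference as
$$\sigma(E) - \sigma(F)\sigma(B) = 4\,\textnormal{Arf}\bigl(L^\perp/L,\,[\Gamma_0 \oplus -\Gamma'_0],\,[\mc{P}_2 \oplus -\mc{P}'_2]/2\bigr) \in 4\zz_2 \subset \zz_8.$$

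The key step is to show this Arf invariant vanishes. The hypothesis that $\pi_1(B)$ acts trivially on $A \otimes \zz_4$ says precisely that $(A, \alpha, U) \otimes \zz_4 = (A, \alpha, \epsilon) \otimes \zz_4$ as symmetric representations of $\zz_4[\pi_1(B)]$. By functoriality of the tensor product with $(C(\widetilde{B}), \phi)$ and of the skew-suspension $\bar{S}^m$, the mod $4$ reductions of the symmetric complexes $(D, \Gamma)$ and $(D', \Gamma')$ are therefore isomorphic. Since the algebraic Pontryagin square $\mc{P}_2(\phi)(u,v) = \phi_0(u,v) + 2\phi_1(v,u)$ from Definition \ref{algebraic-Pont} is $\zz_4$-valued and depends only on the mod $4$ part of the symmetric structure, this isomorphism identifies the triple $(H^{2k}(D;\zz_2), \Gamma_0, \mc{P}_2)$ with $(H^{2k}(D';\zz_2), \Gamma'_0, \mc{P}'_2)$. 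Isomorphic $\zz_4$-valued quadratic enhancements of nonsingular $\zz_2$-symmetric forms carry the same Brown--Kervaire invariant in $\zz_8$, so by Theorem \ref{Morita-theorem-cx} we conclude $\sigma(D, \Gamma) = \sigma(D', \Gamma') \in \zz_8$, whence $\sigma(E) = \sigma(F)\sigma(B) \in \zz_8$ and the Arf invariant must vanish.

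The main technical obstacle lies in this key step: verifying rigorously that the homology-level hypothesis on $H^m(F;\zz)/\text{torsion} \otimes \zz_4$ really transfers to an isomorphism of mod $4$ symmetric complexes in $\bb{D}_{2m}(\zz_4)$. One must check that neither the skew-suspension $\bar{S}^m$ nor the tensor product with $(C(\widetilde{B}), \phi)$ introduces mod $4$ corrections that could separate $\mc{P}_2$ from $\mc{P}'_2$. The functoriality of the transfer construction in the representation $U$, established in Section \ref{functors-sec}, is precisely what makes the transition clean, reducing the problem to the $\zz_4$-level identity $U \otimes \zz_4 = \epsilon \otimes \zz_4$ coming directly from the hypothesis.
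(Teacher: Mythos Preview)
Your proof is correct and follows essentially the same route as the paper: reduce to the algebraic model, observe that $\zz_4$-triviality of $U$ forces $\zz_4 \otimes_{\zz}(D,\Gamma) \cong \zz_4 \otimes_{\zz}(D',\Gamma')$, deduce that the Pontryagin squares $\mc{P}_2$ and $\mc{P}'_2$ agree, and conclude via Morita's Theorem~\ref{Morita-theorem-cx} that the Brown--Kervaire invariants, hence the signatures mod $8$, coincide. The only structural difference is that you open by invoking Theorem~\ref{algebraic-version-4Arf-obstruction} to phrase the obstruction as an Arf invariant and then show it vanishes, whereas the paper proves the algebraic statement (Theorem~\ref{mod8-algebraic}) directly and treats the vanishing of the Arf invariant as a corollary; since your second paragraph already reaches $\sigma(D,\Gamma)=\sigma(D',\Gamma')\in\zz_8$ via Brown--Kervaire, the first paragraph's Arf formulation is logically redundant, though not incorrect.
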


\begin{rem}
Clearly by the definition of the signature, when $m$ and $n$ are odd $$\sigma(F)\sigma(B)=0 \in \zz.$$
So in this case, the theorem establishes that
$$\sigma(E) \equiv 0 \in \zz_8.$$
\end{rem}

\subsubsection{Tools for proving Theorem \ref{mod8-theorem}}

We shall prove Theorem \ref{mod8-theorem} by proving its algebraic analogue, which we  state as Theorem \ref{mod8-algebraic}. In this section we prove some results that will be needed in the proof of the algebraic statement of the theorem (as in Theorem \ref{mod8-algebraic}).

The \textbf{algebraic analogue} of the condition that  $\pi_1(B)$ acts trivially on $H^{m}(F, \bb{Z})/torsion \otimes \bb{Z}_4$ is defined as follows:
\begin{definition}\label{Z4-triviality}
A $(\bb{Z}, m)$-symmetric representation $(A, \alpha, U)$ of a group ring $\bb{Z}[\pi]$ \textbf{is $\bb{Z}_4$-trivial} if
$$U(r) \otimes 1= \epsilon(r)\otimes1 \co A \otimes \bb{Z}_4 \to A \otimes \bb{Z}_4$$
for all $r\in \bb{Z}[\pi],$ where $\epsilon$ denotes the trivial action homomorphism $$\epsilon \co \zz[\pi_1(B)] \to H_0(\textnormal{Hom}_{\zz}(A, A))^{op}.$$
\end{definition}

A weaker condition is that of \textbf{$\zz_2$-triviality}:
\begin{definition}{(Korzeniewski \cite[chapter 8]{Korzen})} \qua \label{Z2-triviality}
A $(\bb{Z}, m)$-symmetric representation $(A, \alpha, U)$ of a group ring $\bb{Z}[\pi]$ \textbf{is $\bb{Z}_2$-trivial} if
$$U(r) \otimes 1= \epsilon(r)\otimes1 \co A \otimes \bb{Z}_2 \to A \otimes \bb{Z}_2$$
for all $r\in \bb{Z}[\pi],$ where $\epsilon$ denotes the trivial action homomorphism as in the previous definition.
\end{definition}

\begin{rem} In particular any statement which holds under assumption of a $\zz_2$-trivial action is also true for an assumption of $\zz_4$-triviality. The converse may not be true.
\end{rem}

In the statement of the algebraic analogue of Theorem \ref{mod8-theorem}  we shall let $(C, \phi)$ be a $2n$-dimensional $(-1)^n$-symmetric Poincar\'e complex over $\zz[\pi]$ and $(A, \alpha, U)$ be a $(\zz, m)$-symmetric representation with $\zz_4$-trivial $U \co \zz[\pi] \to H_0 (\textnormal{Hom}_{\zz}(A, A))^{op}$. When a result is true with a $\zz_2$-trivial action we shall indicate this accordingly.

With $4k =2m+2n$, we shall write $(D, \Gamma)$ for the $4k$-dimensional symmetric Poincar\'e complex over $\zz$ given by
$$(D, \Gamma) = (C, \phi) \otimes_{\zz[\pi]} \bar{S}^m(A, \alpha, U),$$
and $(D', \Gamma')$ for the $4k$-dimensional symmetric Poincar\'e complex over $\zz$ given by
$$ (D', \Gamma') = (C, \phi) \otimes_{\zz[\pi]}\bar{S}^m(A, \alpha, \epsilon),$$
where $(A, \alpha, \epsilon)$ is the trivial representation.

\begin{lemma} \label{lemma-mod8-algebraic}
If the representation $(A, \alpha, U)$ is $\zz_2$-trivial then $$\zz_2 \otimes_{\zz}(D, \Gamma) \cong \zz_2 \otimes_{\zz}(D', \Gamma').$$ That is,
\begin{itemize}
\item[(i)] $H^{2k}(D, \zz_2) \cong H^{2k}(D', \zz_2).$
\item[(ii)] the symmetric structure reduced mod $2$ is the same for both symmetric complexes $(D, \Gamma)$ and $(D', \Gamma').$
\end{itemize}
\end{lemma}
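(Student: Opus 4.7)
The plan is to reduce both parts to a single observation: the construction of $(D,\Gamma) = (C,\phi)\otimes_{\zz[\pi]} \bar S^m(A,\alpha,U)$ uses the action $U$ only through the way $\pi$ acts on $A$ when forming the transfer, and after tensoring with $\zz_2$ the hypothesis of $\zz_2$-triviality says $U(r)\otimes 1 = \epsilon(r)\otimes 1$ on $A\otimes \zz_2$ for every $r\in \zz[\pi]$. So the difference between $U$ and $\epsilon$ is invisible modulo $2$, and one expects an on-the-nose equality of mod-$2$ reductions, not merely a chain equivalence.

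First I would unpack the two chain complexes explicitly. Writing $C$ with $\zz[\pi]$-linear differentials $d_C$, the transfer $(C,\phi)\otimes_{\zz[\pi]}\bar S^m(A,\alpha,U)$ has underlying modules $D_r = \bigoplus_{p+q=r} C_p \otimes_{\zz[\pi]} A$ (with $A$ made into a left $\zz[\pi]$-module via $U$), and the differential is a matrix whose entries are induced by applying $U$ to the matrix entries of $d_C$. The complex $(D',\Gamma')$ has identical underlying modules (after switching to the trivial action), and differentials given by the entries of $d_C$ applied under $\epsilon$. Tensoring with $\zz_2$ and invoking $U(r)\otimes 1 = \epsilon(r)\otimes 1$ for $r$ ranging over the matrix entries of $d_C$ then gives an equality of differentials on the nose. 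The identity on the underlying modules (both are built from the same $C_p\otimes A\otimes \zz_2$) is therefore a chain isomorphism $\zz_2\otimes D \xrightarrow{\cong} \zz_2\otimes D'$, proving (i) after passing to cohomology.

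For (ii) I would trace through the definition of the induced symmetric structure in Section \ref{Iso-derived} (Definition \ref{symm-str-induced-by-U} and Theorem \ref{chain-iso}(ii)). The structure $\Gamma$ is obtained by evaluating the chain map
\[
U_\ast \co \Hom_{\zz[\zz_2]}\bigl(W,\Hom_{\zz[\pi]}(C^\ast,C)\bigr)\lra
\Hom_{\zz[\zz_2]}\bigl(W,\Hom_{\bbD_{2m}(\zz)}\bigl((D)^\ast,D\bigr)\bigr)
\]
on the preferred cycle $\phi$; the analogous map for $\Gamma'$ uses $\epsilon$ in place of $U$. The Pontryagin square, the $\cup_1$-data and all filtered components of $\Gamma_s$ are thus polynomial in the matrix entries of $\phi$ with the action applied through $U$. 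After tensoring with $\zz_2$, the $\zz_2$-triviality hypothesis identifies these two maps, so the chain isomorphism from part (i) carries $\Gamma\bmod 2$ to $\Gamma'\bmod 2$.

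The main obstacle is bookkeeping rather than conceptual: one has to confirm that the sign conventions, the skew-suspension $\bar S^m$, and the Eilenberg–Zilber style chain approximations used in building $\Gamma$ all factor through the action only via matrix entries of the form $U(r)$ (not through higher data that could distinguish $U$ from $\epsilon$ even after mod-$2$ reduction). Once that is verified, both statements follow from the single identity $U(r)\otimes 1 = \epsilon(r)\otimes 1\in \Hom_\zz(A\otimes\zz_2,A\otimes\zz_2)$ applied entrywise.
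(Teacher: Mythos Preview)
Your proposal is correct and follows the same idea as the paper: use the $\zz_2$-triviality hypothesis to identify $U$ with $\epsilon$ after reducing modulo $2$, yielding an isomorphism of the mod-$2$ symmetric complexes. The paper's own argument is the terse one-line chain of isomorphisms
\[
\zz_2 \otimes (C,\phi)\otimes_{\zz[\pi]} \bar S^m(A,\alpha,U)
\;\cong\;
(C,\phi)\otimes_{\zz[\pi]} \bar S^m(A,\alpha,U)\otimes \zz_2
\;\cong\;
(C,\phi)\otimes_{\zz[\pi]} \bar S^m(A,\alpha,\epsilon)\otimes \zz_2,
\]
whereas you unpack this at the level of modules, differentials, and the induced symmetric structure; your extra care about whether the construction of $\Gamma$ uses only matrix entries $U(r)$ is exactly what justifies the middle step that the paper leaves implicit.
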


\begin{proof} \label{proof-iso} If $(A, \alpha, U)$ is a $\zz_2$-trivial representation then
$$
\begin{array}{ccc}
\bb{Z}_2 \otimes_{\bb{Z}} (D, \Gamma) & = ~ \zz_2 \otimes (C, \phi) \otimes_{\zz[\pi]} \bar{S}^m(A, \alpha, U) & \\
&\cong ~ (C, \phi) \otimes_{\zz[\pi]} \bar{S}^m(A, \alpha, U) \otimes \zz_2 & \\
&\cong ~ (C, \phi) \otimes_{\zz[\pi]} \bar{S}^m(A, \alpha, \epsilon) \otimes \zz_2 & \\
&\cong ~ \zz_2 \otimes (C, \phi) \otimes_{\zz[\pi]} \bar{S}^m(A, \alpha, \epsilon) & =~ \bb{Z}_2 \otimes_{\bb{Z}} (D', \Gamma'),
\end{array}
$$
which proves the Lemma and clearly implies (i) and (ii).
\end{proof}

From the proof of Lemma \ref{lemma-mod8-algebraic}, we know that for a $\bb{Z}_2$-trivial twisted product and for an untwisted product, the vector spaces given by the middle dimensional cohomology with $\bb{Z}_2$ coefficients  are isomorphic and the $\bb{Z}_2$-valued symmetric structure is also the same in both cases. Clearly the results in this lemma also hold when the action $U$ is $\zz_4$-trivial.

\subsubsection{Comparing Pontryagin squares for the twisted and untwisted product}\label{comparing-pont}
In section \ref{alg-pont}
 we defined algebraic Pontryagin squares depending on the symmetric structure of a symmetric complex.
A $\zz_4$-valued quadratic function such as the Pontryagin square cannot be recovered uniquely from the associated $\bb{Z}_2$-valued bilinear pairing $\Gamma_0 \co H^{2k}(D;\zz_2)\times H^{2k}(D;\zz_2) \to \bb{Z}_2$. It is crucial for the proof of Theorem \ref{mod8-theorem}  to note that the Pontryagin square depends on the definition of the symmetric structure on integral cochains.
In particular, suppose that
$$(D, \Gamma) = (C, \phi) \otimes_{\zz[\pi]} \bar{S}^m(A, \alpha, U),$$
$$ (D', \Gamma') = (C, \phi) \otimes_{\zz[\pi]}\bar{S}^m(A, \alpha, \epsilon),$$
where $(A, \alpha, U)$ is a $\mathbb{Z}_2$-trivial representation.


Then depending on the integral symmetric structures, we have two different lifts of the modulo $2$ symmetric structure on
$$V = H^{2k}( D; \mathbb{Z}_2) = H^{2k}(D'; \mathbb{Z}_2)$$
which give rise to two different Pontryagin squares.

In other words we are considering two chain complexes which are different over $\zz$, $(D, \Gamma)$ and $(D', \Gamma')$ but are chain equivalent when reduced over $\zz_2$
$$(D, \Gamma) \otimes_{\zz}\zz_2 \cong (D', \Gamma')\otimes_{\zz}\zz_2.$$
But the integral symmetric structures $ \Gamma$ and $ \Gamma'$  are different in the twisted and untwisted products, so this gives rise in general to two different Pontryagin squares of the same $\bb{Z}_2$-valued symmetric bilinear form.

\begin{rem} In what follows  we will use the notation $\mc{P}_{2}$ for the twisted Pontryagin square and $\mc{P}'_2$ for the Pontryagin square on an untwisted product.
\end{rem}

The following Proposition \ref{differ-linear-map} applies precisely to the general situation of the two Pontryagin squares $\mc{P}_2$ and $\mc{P}'_2$ that we have just described above.

\begin{proposition} \label{differ-linear-map}
Let $V$ be a $\bb{Z}_2$-valued vector space and $\lambda \co V \otimes V\to \bb{Z}_2$ a non-singular symmetric bilinear pairing. Any two quadratic  enhancements $q, q' \co V \to \bb{Z}_4$ over $\lambda$ differ by a linear map
$$q'(x)-q(x) = 2 \lambda(x, t) \in \zz_4$$
for some $t \in V.$
\end{proposition}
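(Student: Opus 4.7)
The plan is to study the difference function $f(x) := q'(x) - q(x) \in \bb{Z}_4$ and show that it is a $\bb{Z}_2$-valued linear form, at which point non-singularity of $\lambda$ finishes the argument.

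First, I would verify that $f$ is additive. Since both $q$ and $q'$ are quadratic enhancements of the same pairing $\lambda$, we have
\[
q(x+y) = q(x) + q(y) + 2\lambda(x,y), \qquad q'(x+y) = q'(x) + q'(y) + 2\lambda(x,y),
\]
so the cross terms cancel when we subtract, yielding $f(x+y) = f(x) + f(y) \in \bb{Z}_4$.

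Next, I would show that $f$ in fact takes values in $2\bb{Z}_4 \subset \bb{Z}_4$. Because $V$ is a $\bb{Z}_2$-vector space, $x + x = 0$, and evaluating the quadratic identity at $y = x$ gives $0 = q(0) = 2q(x) + 2\lambda(x,x)$, so $2q(x) = 2\lambda(x,x)$ in $\bb{Z}_4$. The same holds for $q'$, and subtracting gives $2f(x) = 0$. Thus $f$ factors through the inclusion $i\co \bb{Z}_2 \hookrightarrow \bb{Z}_4$, i.e.\ there is a $\bb{Z}_2$-linear map $\bar f \co V \to \bb{Z}_2$ with $f(x) = 2\bar f(x)$.

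Finally, since $\lambda$ is non-singular, the adjoint $V \to \mathrm{Hom}_{\bb{Z}_2}(V, \bb{Z}_2)$ is an isomorphism, so there exists a unique $t \in V$ with $\bar f(x) = \lambda(x, t)$ for all $x$. Substituting yields
\[
q'(x) - q(x) = 2\lambda(x,t) \in \bb{Z}_4,
\]
as required. The argument is essentially algebraic bookkeeping; the only subtle step is the reduction into $2\bb{Z}_4$, which is forced by characteristic two of $V$, and I do not anticipate any real obstacle.
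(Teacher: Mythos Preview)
Your argument is correct and complete: the difference $f=q'-q$ is additive because the cross terms $2\lambda(x,y)$ cancel, the characteristic-two identity $2q(x)=2\lambda(x,x)$ forces $2f(x)=0$, and non-singularity of $\lambda$ then represents the resulting $\bb{Z}_2$-linear functional by some $t\in V$.

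The paper does not actually prove this proposition; it simply cites Taylor, Deloup, and Ranicki--Taylor. Your write-up is the standard elementary argument those references contain, so you have supplied what the paper omits rather than taken a different route.
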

\begin{proof}
See \cite{Taylor}, \cite{Deloup}, \cite[page 10]{Mod8}.
\end{proof}

Consequently from this proposition we know that the two Pontryagin squares $\mc{P}_2$ and $\mc{P}'_2$ differ by linear map.
Furthermore the Brown-Kervaire invariants of two quadratic enhancements as in Proposition \ref{differ-linear-map} are related by the following theorem.
\begin{theorem}{\rm (Brown \cite[Theorem 1.20 (x)]{Brown})} \qua
Let $V$ be a $\bb{Z}_2$-valued vector space and $\lambda \co V \otimes V\to \bb{Z}_2$ a non-singular symmetric bilinear pairing, then any two quadratic  enhancements $q, q' \co V \to \bb{Z}_4$ over $\lambda$ differ by a linear map, $q'(x)-q(x) = 2 \lambda(x, t) \in \zz_4$ and
$$\mathrm{BK}(V,\lambda,q)-\mathrm{BK}(V, \lambda,q')=2q(t) \in \zz_8$$
for some $t \in V$.
\end{theorem}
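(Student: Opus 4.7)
The plan is to derive the identity directly from the Gauss sum definition of the Brown--Kervaire invariant. The first claim, that $q$ and $q'$ differ by a map of the form $x \mapsto 2\lambda(x,t)$, is essentially the preceding proposition combined with non-singularity of $\lambda$: it lets one represent any $\bb{Z}_2$-linear functional $V \to \bb{Z}_2$ (namely $x \mapsto (q'(x)-q(x))/2$) as $\lambda(-,t)$ for a unique $t \in V$. So the substantive content is the mod $8$ identity relating $\mathrm{BK}(V,\lambda,q)$ and $\mathrm{BK}(V,\lambda,q')$.

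The starting point of the computation is
$$\sum_{x \in V} i^{q'(x)} \;=\; \sum_{x \in V} i^{q(x)} \cdot i^{2\lambda(x,t)} \;=\; \sum_{x \in V} i^{q(x)} (-1)^{\lambda(x,t)},$$
using that $2\lambda(x,t) \in \{0,2\} \subset \bb{Z}_4$. The key move is then the substitution $x = y + t$. Expanding $q(y+t) = q(y) + q(t) + 2\lambda(y,t)$ produces a phase $i^{2\lambda(y,t)} = (-1)^{\lambda(y,t)}$ inside $i^{q(y+t)}$, which precisely cancels the $y$-dependent part of $(-1)^{\lambda(y+t,t)} = (-1)^{\lambda(y,t)}(-1)^{\lambda(t,t)}$. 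After this cancellation, the $y$-sum collapses to the untwisted Gauss sum for $q$ multiplied by a global scalar $i^{q(t)} \cdot (-1)^{\lambda(t,t)}$.

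To simplify that scalar I would invoke the congruence $\lambda(t,t) \equiv q(t) \pmod{2}$, which follows from $0 = q(t+t) = 2q(t) + 2\lambda(t,t) \in \bb{Z}_4$ (since $V$ has characteristic $2$, $t+t = 0$). Therefore $(-1)^{\lambda(t,t)} = i^{2q(t)}$ and the scalar is $i^{3q(t)} = i^{-q(t)}$. Dividing the two instances of the defining relation $\sum_x i^{q(x)} = \sqrt{2}^{\dim V}\, e^{2\pi i\,\mathrm{BK}(V,\lambda,q)/8}$ then gives
$$e^{2\pi i(\mathrm{BK}(V,\lambda,q') - \mathrm{BK}(V,\lambda,q))/8} \;=\; i^{-q(t)} \;=\; e^{-2\pi i \cdot 2q(t)/8},$$
whence $\mathrm{BK}(V,\lambda,q') - \mathrm{BK}(V,\lambda,q) \equiv -2q(t) \pmod 8$, which is the stated identity.

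The only real difficulty is careful bookkeeping: one must keep track of elements in $\bb{Z}_2$, $\bb{Z}_4$ and $\bb{Z}_8$ simultaneously, and the crux is the identification $i^{2(\cdot)} = (-1)^{(\cdot)}$, which is what makes the sign introduced by the change of variable cancel the phase produced by the quadratic identity. There is no topological or deeper structural obstruction; this is a purely algebraic Gauss sum manipulation.
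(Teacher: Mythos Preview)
Your Gauss sum computation is correct and complete. The substitution $x = y+t$ together with the quadratic expansion of $q(y+t)$ is exactly the right move, and your derivation of $\lambda(t,t) \equiv q(t) \pmod 2$ from $q(0)=0$ cleanly handles the leftover constant phase, yielding the stated identity $\mathrm{BK}(V,\lambda,q)-\mathrm{BK}(V,\lambda,q')=2q(t)\in\zz_8$.

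Note, however, that the paper does not actually prove this theorem: it is quoted verbatim from Brown \cite[Theorem~1.20~(x)]{Brown} and used as a black box. So there is no ``paper's own proof'' to compare against. What you have supplied is a self-contained elementary argument that could stand in place of the citation. The only minor comment is that the first claim (existence of $t$) relies on the fact that $x\mapsto (q'(x)-q(x))/2$ is $\zz_2$-linear, which in turn uses that both $q$ and $q'$ refine the \emph{same} $\lambda$; you gesture at this but it is worth making explicit, since linearity is what lets you invoke non-singularity of $\lambda$ to produce $t$.
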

Note that when $\mathrm{BK}(V,\lambda,q)$ and $\mathrm{BK}(V,\lambda,q')$ are divisible by $4$ we can write this relation in  terms of the Arf invariant
$$\mathrm{Arf}(W, \mu ,h)-\mathrm{Arf}(W, \mu, h')=h(t) \in \zz_2.$$

We will now show that by setting the condition of a $\zz_4$-trivial action $U$ there is an isomorphism between the untwisted Pontryagin square and the $\zz_4$-twisted Pontryagin square, in which case $h(t)=0.$

\subsubsection{Pontryagin squares and $\mathbb{Z}_4$-trivial representations}\label{Pont-action}

In the previous subsection we emphasized that a $\mathbb{Z}_4$-valued quadratic function such as the Pontryagin squares cannot be recovered uniquely from the associated $\mathbb{Z}_2$-valued bilinear pairing.

Here $(D, \Gamma)= (C, \phi) \otimes_{\zz[\pi]} \bar{S}^m(A, \alpha, U)$ is as before a $4k$-dimensional symmetric Poincar\'e complex over $\zz$, with $U$ the action of the $\zz[\pi]$ on $(A, \alpha)$, and $(D', \Gamma')= (C, \phi) \otimes_{\zz[\pi]} \bar{S}^m(A, \alpha, \epsilon)$ the $4k$-dimensional symmetric Poincar\'e complex  with $(A, \alpha,  \epsilon)$  the trivial representation.

We will now show that the Pontryagin squares on $(D, \Gamma)$ and $(D', \Gamma')$  can be described in terms of $\mathbb{Z}_4 \otimes_{\mathbb{Z}}(D, \Gamma)$ and $\mathbb{Z}_4 \otimes_{\mathbb{Z}} (D', \Gamma')$ respectively.

Firstly we note the following Lemma, which is similar to Lemma \ref{lemma-mod8-algebraic}.
\begin{lemma} \label{lemma-Z4-trivial}
If the representation $(A, \alpha, U)$ is $\zz_4$-trivial then $$\zz_4 \otimes_{\zz}(D, \Gamma) \cong \zz_4 \otimes_{\zz}(D', \Gamma').$$
\end{lemma}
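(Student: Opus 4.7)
The plan is to follow exactly the argument of Lemma \ref{lemma-mod8-algebraic}, replacing the coefficient ring $\zz_2$ with $\zz_4$ throughout. The whole point of formulating $\zz_4$-triviality in Definition \ref{Z4-triviality} as the equality $U(r)\otimes 1 = \epsilon(r)\otimes 1$ of endomorphisms of $A\otimes \zz_4$ was to make this substitution possible.

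Concretely, I would first commute the tensor product with $\zz_4$ past the tensor over $\zz[\pi]$, writing
$$\zz_4 \otimes_{\zz}(D,\Gamma) \;=\; \zz_4\otimes_{\zz}\bigl((C,\phi)\otimes_{\zz[\pi]}\bar S^{m}(A,\alpha,U)\bigr) \;\cong\; (C,\phi)\otimes_{\zz[\pi]}\bigl(\bar S^{m}(A,\alpha,U)\otimes_{\zz}\zz_4\bigr).$$
The $\zz_4$-triviality hypothesis says that, after tensoring the coefficient module $A$ with $\zz_4$, the representation $U$ becomes indistinguishable from the trivial representation $\epsilon$: that is, $(A,\alpha,U)\otimes\zz_4 = (A,\alpha,\epsilon)\otimes\zz_4$ as symmetric representations of $\zz[\pi]$ in $\bbD_{2m}(\zz_4)$. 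Skew-suspension $\bar S^{m}$ is a functorial construction that commutes with base change, so the same equality holds after $\bar S^{m}$. Reversing the first step then gives
$$(C,\phi)\otimes_{\zz[\pi]}\bigl(\bar S^{m}(A,\alpha,\epsilon)\otimes_{\zz}\zz_4\bigr) \;\cong\; \zz_4\otimes_{\zz}\bigl((C,\phi)\otimes_{\zz[\pi]}\bar S^{m}(A,\alpha,\epsilon)\bigr) \;=\; \zz_4\otimes_{\zz}(D',\Gamma').$$
Chaining these isomorphisms produces the desired isomorphism $\zz_4\otimes_{\zz}(D,\Gamma)\cong\zz_4\otimes_{\zz}(D',\Gamma')$.

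There is essentially no obstacle here; the statement is true by design of the definition of $\zz_4$-triviality. The only small bookkeeping point to verify is that the isomorphism respects both the underlying chain complex structure and the symmetric structure $\Gamma$ (equivalently $\Gamma'$) modulo $4$. This follows because the symmetric structure on the tensor product $(C,\phi)\otimes_{\zz[\pi]}\bar S^{m}(A,\alpha,U)$ described in Definition \ref{symm-str-induced-by-U} is built naturally from $\phi$, $\alpha$ and $U$; since $U$ and $\epsilon$ agree after applying $-\otimes\zz_4$, the induced symmetric structures $\Gamma$ and $\Gamma'$ agree after the same base change. This is exactly the statement we need to conclude that the two Pontryagin squares $\mc{P}_2$ and $\mc{P}'_2$ on $V = H^{2k}(D;\zz_2)=H^{2k}(D';\zz_2)$ coincide (and not merely their mod $2$ reductions, as in Lemma \ref{lemma-mod8-algebraic}), which is the ingredient that will be fed into the proof of Theorem \ref{mod8-algebraic}.
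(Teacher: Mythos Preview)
Your proposal is correct and follows exactly the approach the paper takes: the paper's own proof of this lemma consists of the single sentence ``The proof follows from the same argument as the proof of Lemma~\ref{lemma-mod8-algebraic},'' i.e.\ the chain of isomorphisms you wrote out with $\zz_2$ replaced by $\zz_4$. Your additional remarks about why the symmetric structure is preserved and how this feeds into the comparison of Pontryagin squares are a welcome elaboration, but they do not deviate from the paper's intended argument.
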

\begin{proof} \label{proof-iso-Z4} The proof follows from the same argument as the proof of Lemma \ref{lemma-mod8-algebraic}.
\end{proof}
\begin{lemma}\label{Z4-pont} If the action $U$ is $\zz_4$-trivial then the $\zz_4$-twisted Pontryagin square on $(D, \Gamma)$ is isomorphic to the untwisted Pontryagin square on $(D', \Gamma').$
\end{lemma}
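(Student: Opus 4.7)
The plan is to observe that the algebraic Pontryagin square, as given by the formula
$$(u,v)^{\%}(\phi) = \phi_0(u,v) + 2\phi_1(v,u) \in \zz_4$$
in Definition \ref{algebraic-Pont}, is in fact determined by the symmetric complex only up to the data carried by $\zz_4 \otimes_{\zz}(D,\Gamma)$. Indeed, the first summand $\phi_0(u,v)$ is a priori an integer, but it enters $\zz_4$ only through its mod $4$ reduction; the second summand $2\phi_1(v,u)$ is well defined in $\zz_4$ as soon as $\phi_1$ is known modulo $2$. Both pieces of information are recovered by tensoring the symmetric structure $\Gamma$ with $\zz_4$, since the factorization $\zz \to \zz_4 \to \zz_2$ extracts the required reductions of $\Gamma_0$ and $\Gamma_1$.

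Having established that $\mc{P}_2(\Gamma)$ depends only on the isomorphism class of $\zz_4 \otimes_{\zz}(D,\Gamma)$ as a $\zz_4$-coefficient symmetric complex, I would then invoke Lemma \ref{lemma-Z4-trivial}, which gives under the hypothesis of $\zz_4$-triviality a chain isomorphism of symmetric complexes
$$\zz_4 \otimes_{\zz}(D,\Gamma) \;\cong\; \zz_4 \otimes_{\zz}(D',\Gamma').$$
Reducing this isomorphism further to $\zz_2$ coefficients identifies $H^{2k}(D;\zz_2)$ with $H^{2k}(D';\zz_2)$ as in Lemma \ref{lemma-mod8-algebraic}, and this identification is compatible with representing $\zz_2$-cohomology classes by chain maps $(u,v)\co D \to B(2k,2)$ (since $B(2k,2)$ is defined over $\zz$ and its $Q$-group $\zz_4$ is invariant under the reductions we need). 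Applying the quadratic formula to both sides of the isomorphism therefore yields the identification of $\mc{P}_2(\Gamma)$ with $\mc{P}'_2(\Gamma')$ as $\zz_4$-valued quadratic refinements of the common bilinear form on $V = H^{2k}(D;\zz_2) = H^{2k}(D';\zz_2)$.

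The step that will require the most care is the second one: namely, tracking how the chain isomorphism $\zz_4 \otimes_{\zz}(D,\Gamma) \cong \zz_4 \otimes_{\zz}(D',\Gamma')$ produced by Lemma \ref{lemma-Z4-trivial} intertwines the evaluation maps $(u,v)^{\%}$ on the two sides. In particular, I would need to verify that if $g=(u,v)\co D \to B(2k,2)$ represents $x \in H^{2k}(D;\zz_2)$ and $g'=(u',v')$ is its image under the induced isomorphism, then $\phi_0(u,v) \equiv \phi'_0(u',v') \pmod 4$ and $\phi_1(v,u) \equiv \phi'_1(v',u') \pmod 2$. This is where the explicit form of the isomorphism from Lemma \ref{lemma-Z4-trivial}, together with the formula for the tensor-product symmetric structure induced by the representation, must be used; once this compatibility is established, the two Pontryagin squares coincide as functions $V \to \zz_4$, giving the claimed isomorphism of quadratic enhancements.
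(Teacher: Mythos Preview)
Your proposal follows the same strategy as the paper: show that the algebraic Pontryagin square on $(D,\Gamma)$ depends only on $\zz_4 \otimes_{\zz}(D,\Gamma)$, and then invoke Lemma \ref{lemma-Z4-trivial}. The difference is purely in how the first step is implemented.

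You argue directly from the formula $\phi_0(u,v)+2\phi_1(v,u)$ in Definition \ref{algebraic-Pont}, noting that only $\Gamma_0 \bmod 4$ and $\Gamma_1 \bmod 2$ enter. This is correct, but as you yourself flag, it leaves you with the problem of transporting the \emph{integral} representative $(u,v)\co D \to B(2k,2)$ across an isomorphism that exists only after tensoring with $\zz_4$. The paper avoids this bookkeeping by replacing the target $B(2k,2)$ with a $\zz_4$-module resolution $\bar B(2k,2)$ of $\zz_2$ (the periodic complex $\cdots \xleftarrow{2}\zz_4\xleftarrow{2}\zz_4\xleftarrow{2}\cdots$) and observing that the comparison map $B(2k,2)\to\bar B(2k,2)$ induces an injection $Q^{4k}(B(2k,2))=\zz_4 \hookrightarrow Q^{4k}(\bar B(2k,2))$. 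A class $x\in H^{2k}(D;\zz_2)$ is then represented by a chain map $f_x\co \zz_4\otimes_{\zz} D \to \bar B(2k,2)$ of $\zz_4$-module complexes, and $\mc{P}_2(x)=(f_x)^{\%}(\zz_4\otimes\Gamma)$. In this formulation everything lives over $\zz_4$ from the outset, so the isomorphism of Lemma \ref{lemma-Z4-trivial} transports $f_x$ and $\zz_4\otimes\Gamma$ simultaneously without further verification. Your route works too, but the paper's device of passing to $\bar B(2k,2)$ is exactly what makes the ``care'' you anticipate unnecessary.
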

\begin{proof}
 Let $B(2k, 2)$ be as before the $\mathbb{Z}$-module chain complex concentrated in dimensions $2k+1$ and $2k$,
$$B(2k, 2) \co \dots \rightarrow \mathbb{Z} \xrightarrow{2} \mathbb{Z} \rightarrow,$$
and let $\bar{B}(2k, 2)$ be a shifted resolution of $\mathbb{Z}_2$ as a module over $\mathbb{Z}_4,$
$$\dots \leftarrow 0 \leftarrow 0 \leftarrow \mathbb{Z}_4 \xleftarrow{.2}  \mathbb{Z}_4    \xleftarrow{.2}   \mathbb{Z}_4     \xleftarrow{.2}  \dots$$
so that $H_{2k}(\bar{B}(2k, 2)) \cong \mathbb{Z}_2$ and all other homology are $0$.
There is a chain map unique up to chain homotopy from $B(2k, 2)$ to $\bar{B}(2k, 2)$, which induces an isomorphism in homology.
This chain map also induces an injective map in the symmetric $Q$-groups
$$Q^{4k}(B(2k, 2)) = \mathbb{Z}_4 \longrightarrow Q^{4k}(\bar{B}(2k, 2)).$$
An element $x \in H^{2k}(D; \mathbb{Z}_2)$ can be represented by a chain map
$$f_x \co \mathbb{Z}_4 \otimes_{\mathbb{Z}} D \longrightarrow \bar{B}(2k, 2),$$
which is unique up to chain homotopy. The map $f_x$ induces a homomorphism of symmetric $Q$-groups
$$(f_x)^{\%}  \co Q^{4k}(\mathbb{Z}_4 \otimes_{\mathbb{Z}} D) \longrightarrow Q^{4k}(\bar{B}(2k, 2)).$$
Evaluating this homomorphism on $\mathbb{Z}_4 \otimes \Gamma \in Q^{4k}(\mathbb{Z}_4 \otimes_{\mathbb{Z}} D )$ we get the Pontryagin square as an element $\mathcal{P}_2(x) \in Q^{4k}(\bar{B}(2k, 2)).$ Hence the algebraic Pontryagin square on $H^{2k}(D; \mathbb{Z}_2)$ can be expressed in terms of $\mathbb{Z}_4 \otimes (D, \Gamma)$ only.

A similar argument holds for the Pontryagin square on $(D', \Gamma').$

 By Lemma \ref{lemma-Z4-trivial} we know that if $(A, \alpha, U)$ is a $\zz_4$-trivial representation then $$\zz_4 \otimes_{\zz}(D, \Gamma) \cong \zz_4 \otimes_{\zz}(D', \Gamma'),$$ and since the Pontryagin squares $\mathcal{P}_2$ and $\mathcal{P}'_2$ on $(D, \Gamma)$ and $(D', \Gamma')$   only depend on  $\zz_4 \otimes_{\zz}(D, \Gamma)$ and on $\zz_4 \otimes_{\zz}(D', \Gamma')$ respectively, then we deduce that these Pontryagin squares  are isomorphic.
\end{proof}

\subsubsection{The algebraic analogue of Theorem \ref{mod8-theorem}}

We can now state and prove the algebraic analogue of Theorem \ref{mod8-theorem}. The proof of the algebraic analogue implies the proof of the geometric statement.
\begin{theorem} \label{mod8-algebraic} Let $(D, \Gamma)$ be a $4k$-dimensional symmetric Poincar\'e complex over $\zz$ of the form
$$(D, \Gamma)=  (C, \phi) \otimes \bar{S}^m(A, \alpha, U),$$
with $U$ the action of $\zz[\pi]$ on $(A, \alpha).$

If the representation $(A, \alpha, U)$ is $\zz_4$-trivial then
$$\sigma(D, \Gamma) - \sigma(D', \Gamma') = 0 \in \zz_8,$$
where $(D', \Gamma')=  (C, \phi) \otimes \bar{S}^m(A, \alpha, \epsilon)$ is the trivial product.
\end{theorem}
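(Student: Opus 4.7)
My plan is to assemble the preceding two lemmas by means of the Morita reformulation of the signature mod $8$ (Theorem \ref{Morita-theorem-cx}). First, applying that theorem to the two $4k$-dimensional symmetric Poincar\'e complexes $(D, \Gamma)$ and $(D', \Gamma')$, I would express
$$\sigma(D, \Gamma) = \mathrm{BK}(H^{2k}(D; \bb{Z}_2), \Gamma_0, \mc{P}_2), \qquad \sigma(D', \Gamma') = \mathrm{BK}(H^{2k}(D'; \bb{Z}_2), \Gamma'_0, \mc{P}'_2) \in \bb{Z}_8.$$
This reduces the theorem to showing that the two triples, each consisting of a $\bb{Z}_2$-vector space with a nonsingular $\bb{Z}_2$-valued symmetric bilinear form and a $\bb{Z}_4$-valued quadratic enhancement, are isomorphic, since the Brown--Kervaire invariant is defined intrinsically (via the Gauss sum of Definition \ref{Gauss-sum-formula}) and so is an isomorphism invariant of such triples.

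Next I would exploit that $\bb{Z}_4$-triviality of $U$ formally implies $\bb{Z}_2$-triviality, so that Lemma \ref{lemma-mod8-algebraic} produces a chain equivalence $\bb{Z}_2 \otimes_{\bb{Z}} (D, \Gamma) \cong \bb{Z}_2 \otimes_{\bb{Z}} (D', \Gamma')$; this identifies $H^{2k}(D; \bb{Z}_2)$ with $H^{2k}(D'; \bb{Z}_2)$ and matches the underlying $\bb{Z}_2$-valued symmetric bilinear forms $\Gamma_0$ and $\Gamma'_0$. Then Lemma \ref{Z4-pont}---the step genuinely using the stronger $\bb{Z}_4$-triviality hypothesis---identifies the twisted Pontryagin square $\mc{P}_2$ on $(D, \Gamma)$ with the untwisted Pontryagin square $\mc{P}'_2$ on $(D', \Gamma')$ via the isomorphism $\bb{Z}_4 \otimes_{\bb{Z}} (D, \Gamma) \cong \bb{Z}_4 \otimes_{\bb{Z}} (D', \Gamma')$ supplied by Lemma \ref{lemma-Z4-trivial}. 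Combining these two identifications yields the desired isomorphism of full triples, and therefore
$$\sigma(D, \Gamma) - \sigma(D', \Gamma') = \mathrm{BK}(H^{2k}(D; \bb{Z}_2), \Gamma_0, \mc{P}_2) - \mathrm{BK}(H^{2k}(D'; \bb{Z}_2), \Gamma'_0, \mc{P}'_2) = 0 \in \bb{Z}_8.$$

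The bulk of the technical content has been relegated to Lemmas \ref{lemma-mod8-algebraic}, \ref{lemma-Z4-trivial} and \ref{Z4-pont}, so the remaining issue is essentially one of bookkeeping: I need to check that the identification of cohomology at the $\bb{Z}_2$ level is compatible with, and is indeed induced by, the $\bb{Z}_4$ level identification lifting the Pontryagin squares. This is what the main obstacle amounts to, and it is built into the proof of Lemma \ref{Z4-pont}, where the Pontryagin square on $(D, \Gamma)$ is expressed purely in terms of $\bb{Z}_4 \otimes_{\bb{Z}} (D, \Gamma)$ together with a chain map representing a class in $H^{2k}(D; \bb{Z}_2)$. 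Alternatively, one may bypass Morita and apply Theorem \ref{algebraic-version-4Arf-obstruction} directly: under $\bb{Z}_4$-triviality the combined enhancement $\mc{P}_2 \oplus -\mc{P}'_2$ becomes the ``diagonal'' one on $(V \oplus V, \Gamma_0 \oplus -\Gamma_0)$, which is metabolic with respect to the Wu sublagrangian, whence its Arf invariant vanishes.
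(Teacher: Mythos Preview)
Your proposal is correct and follows essentially the same route as the paper's own proof: apply Morita's theorem to reduce to Brown--Kervaire invariants, then invoke Lemma~\ref{lemma-mod8-algebraic} (via $\zz_4$-triviality $\Rightarrow$ $\zz_2$-triviality) to match the underlying $\zz_2$-forms and Lemma~\ref{Z4-pont} to match the Pontryagin squares. Your added remark on the compatibility of the $\zz_2$- and $\zz_4$-level identifications and the alternative via Theorem~\ref{algebraic-version-4Arf-obstruction} are reasonable embellishments but not needed beyond what the paper does.
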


\begin{proof}
The signatures modulo $8$ of both $(D, \Gamma)$ and $(D', \Gamma')$ are given by Morita's theorem \ref{Morita-theorem-cx} to be
\begin{itemize}
\item $\sigma(D, \Gamma) \equiv \mathrm{BK}(H^{2k}(D; \zz_2), \Gamma_0, \mc{P}_2) \pmod{8}$
\item $\sigma(D', \Gamma') \equiv \mathrm{BK}(H^{2k}(D'; \zz_2), \Gamma_0, \mc{P}'_2) \pmod{8}.$
\end{itemize}

From Lemma \ref{lemma-mod8-algebraic} we know that
$$(H^{2k}(D; \zz_2), \Gamma_0) \cong (H^{2k}(D'; \zz_2), \Gamma'_0),$$
and from Lemma \ref{Z4-pont} we know that the two Pontryagin squares $\mc{P}_2$ and $\mc{P}'_2$ are isomorphic, hence
\begin{align*}
\sigma(D, \Gamma) & = \mathrm{BK}(H^{2k}(D; \zz_2), \Gamma_0, \mc{P}_2) \\
                            & =  \mathrm{BK}(H^{2k}(D'; \zz_2), \Gamma'_0, \mc{P}'_2) \\
                             & = \sigma(D', \Gamma') \in \zz_8
\end{align*}
so the result follows. \proved
\end{proof}

The geometric proof of Theorem \ref{mod8-theorem}, which states that $\sigma(E)- \sigma(B \times F) = 0 \in \zz_8$ for a fibration $F^{2m}\to E^{4k} \to B^{2n}$ with trivial action of $\pi_1(B)$ on $H^m(F, \zz)/torsion \otimes \zz_4$  is a consequence of the algebraic proof:

Let $F^{2m} \to E^{4k} \to B^{2n}$ be Poincar\'e fibration with $\zz_4$-trivial action. Then according to Remark \ref{equal-signatures},  the
$\zz_4$-enhanced symmetric forms over $\zz_2$
$$(H^{2k}(E;\zz_2),\lambda_E,q_E)~,~
(H^{n}(B;\zz_2)\otimes_{\zz_2}H^{m}(F;\zz_2),\lambda_B\otimes \lambda_F,q_B \otimes q_F)$$
of the symmetric Poincar\'e complexes over $\zz_4$
$$(C(E;\zz_4),\phi_E)~,~(C(B;\zz_4),\phi_B) \otimes (C(F;\zz_4),\phi_F)$$
have the same signature, so that
\begin{align*}
\sigma(E)&= \mathrm{BK}(H^{2k}(E;\zz_2),\lambda_E,q_E) \\[1ex]
&=\mathrm{BK}(H^{2k}(B \times F;\zz_2),\lambda_B,q_B) \\[1ex]
&=\sigma(B \times F) \in \zz_8 \\[1ex]
& = \sigma(B) \sigma(F) \in \zz_8.
\end{align*}

For Theorem \ref{mod8-theorem} to be true with a $\zz_2$-trivial action, we would need to prove that there exists an isomorphism of the untwisted and the twisted Pontryagin squares with $U$ a $\zz_2$-trivial action. At the moment it is only clear that two such Pontryagin squares differ by a linear map as explained in Proposition \ref{differ-linear-map}. However there is no problem if the action is $\zz_4$-trivial, as shown in Theorem \ref{mod8-theorem}.
\newpage

\section*{Appendix} \label{Appendix}

In this appendix we summarize the notation used throughout the paper.
\begin{center}
  \begin{tabular}{ l  l }
 \\ \textbf{In Algebra:}  \\   \\ \hline
      $(C, \phi)$ & A $2n$-dimensional  $(-1)^n$-symmetric Poincar\'e \\  & complex over $\zz[\pi]$. (\cite{atsI}) \\ \hline

  $(A, \alpha)$  & A nonsingular $(-1)^m$-symmetric form over $\zz$  \\ & with $\alpha \co A \to A^*$ (Definition \ref{representation A})  \\ \hline
  $(A, \alpha, U)$ & A $(\zz, m)$-symmetric representation with \\ & $U \co \bb{Z}[\pi] \to H_0( \textnormal{Hom}_{\zz}(A, A))^{op}$ \\ & (Definition \ref{functor-homology}) \\ \hline
$\bar{S}^m(A, \alpha, U)$ & Skew-suspension of $(A, \alpha, U)$ \\ \hline
$(C, \phi) \otimes \bar{S}^m(A, \alpha, U) = (D, \Gamma)$ {\color{White}aaaaaaaaa}& A $4k=2m+2n$-dimensional symmetric  \\  & complex over $\zz$ with the action of $\pi$\\ &  given by $U$. (Proposition \ref{two-functors}). \\ \hline
  $(C, \phi) \otimes \bar{S}^m(A, \alpha, \epsilon) = (D', \Gamma')$  & The $4k$-dimensional symmetric complex  \\  & over $\zz$ with the action of $\pi$ given by \\ & the trivial action $\epsilon$.  (Proposition \ref{two-functors})  \\ \hline

 $C^{(2n)-r+s} \otimes  \bar{S}^mA^* \xrightarrow{U(\phi_s)  (\alpha)}C_r \otimes  \bar{S}^mA $ {\color{White}{aaaaaaa}}& The symmetric structure of a twisted product.  \\ & (Theorem \ref{chain-iso} and \cite{SurTransfer}) \\ \hline
  $\mc{P}_2$  & Pontryagin square depending on  \\ & the symmetric  structure  $\Gamma$ (Section \ref{alg-pont}) \\ \hline
 $\mc{P}'_2$ & Pontryagin square depending on  \\ & the symmetric  structure $\Gamma'$ (Section \ref{alg-pont}) \\ \hline

 \\ \textbf{In Topology:}  \\   \\ \hline
      $(C(\tilde{B}), \phi)$ & The $2m$-dimensional symmetric  complex \\ &  over $\zz[\pi_1(B)]$ of the universal cover \\  &of the base. (\cite{atsI}) \\ \hline
  $A = H^{m}(F, \bb{Z})$    & The middle dimensional cohomology \\& of the fibre $F^{2m}$ (Definition \ref{representation A}) \\ \hline
\end{tabular}
\end{center}


\begin{center}
  \begin{tabular}{ l  l }
  $(A, \alpha)$  & The nonsingular $(-1)^m$-symmetric form   \\ & with $\alpha= (-1)^m \alpha^* \co A \to A^*$ \\ & (Definition \ref{representation A})  \\ \hline
  $(A, \alpha, U)$ & The symmetric representation with \\ & $U \co \bb{Z}[\pi_1(B)] \to H_0( \textnormal{Hom}_{\zz}(A, A))^{op}$ \\ & (Definition \ref{functor-homology}) \\ \hline
$\bar{S}^m(A, \alpha, U)$ & $2m$-dimensional $(-1)^m$-symmetric complex \\& given by skew-suspension of $(A, \alpha, U)$ \\ &  (See Remark \ref{skew-susp}) \\ \hline

$(C(\tilde{B}), \phi) \otimes_{\zz[\pi_1(B)]} \bar{S}^m(A, \alpha, U)$ {\color{White}aaaaaaaaa}& The chain complex model for the \\  & total space with the action of $\pi_1(B)$\\ &  given by $U$. (Proposition \ref{two-functors})\\ \hline
$\sigma \left((C(\tilde{B}), \phi) \otimes_{\zz[\pi_1(B)]} \bar{S}^m(A, \alpha, U) \right)$  {\color{White}aaaaaaaaa}& The signature of the total space \\ $= \sigma(E) \in \zz$& is the signature of the chain complex model \\  & for the total space with the action of $\pi_1(B)$\\ &  given by $U$. (\cite[Theorem 4.9]{Korzen})\\ \hline

  $(C(\tilde{B}), \phi) \otimes_{\zz[\pi_1(B)]} \bar{S}(A, \alpha, \epsilon)$  & The chain complex model for the  \\  & total space with trivial  action of $\pi_1(B)$, \\ & which we denote by $\epsilon$.  (Proposition \ref{two-functors}) \\ \hline

$\sigma \left((C(\tilde{B}), \phi) \otimes_{\zz[\pi_1(B)]} \bar{S}^m(A, \alpha, \epsilon) \right)$  {\color{White}aaaaaaaaa}& The signature of the trivial product $B \times F$ \\ $=\sigma(C(\tilde{B}), \phi) \otimes_{\zz[\pi_1(B)]} \sigma(\bar{S}^m(A, \alpha, \epsilon)) $& is the signature of the chain complex model  \\  $= \sigma(B) \sigma(F) \in \zz$& for the total space with the action of $\pi_1(B)$\\ &  given by $\epsilon$. (\cite[Theorem 4.9]{Korzen})\\ \hline

 $C^{(2n)-r+s}(\tilde{B}) \otimes  \bar{S}A^* \xrightarrow{U(\phi_s)  (\alpha)}C_r(\tilde{B})\otimes  \bar{S}A $ & symmetric structure of a twisted product.  \\ & (Theorem \ref{chain-iso} and \cite{SurTransfer}) \\ \hline
  $\mc{P}_2$   &Pontryagin square of twisted product \\ &depending on the symmetric  structure \\ &  of  $(C(\tilde{B}), \phi) \otimes \bar{S}(A, \alpha, U)$ (Section \ref{alg-pont}) \\ \hline

 $\mc{P}'_2$ &Pontryagin square of untwisted product \\ &depending on the symmetric  structure \\ &  of  $(C(\tilde{B}), \phi) \otimes \bar{S}(A, \alpha, \epsilon)$ (Section \ref{alg-pont}) \\ \hline

  \end{tabular}
\end{center}


%
%

%
\bibliographystyle{gtart}

\bibliography{biblio-TT}



\end{document}